\newcommand{\ROM}[1]{\mathrm{\uppercase\expandafter{\romannumeral#1}}}
\theoremstyle{definition}
\numberwithin{equation}{section}
\theoremstyle{plain}
\newtheorem{thm}{Theorem}
\newtheorem{lem}{Lemma}[section]
\newtheorem{defn}{Definition}[section]
\newtheorem{cor}{Corollary}
\newtheorem{rem}{Remark}
\newtheorem{prop}{Proposition}
\newtheorem{ques}{Question}[section]
\title[Isoparametric foliations and complex structures]{\textbf{Isoparametric foliations and complex structures}}
\author[Chao Qian]{Chao Qian}\address{School of Mathematics and Statistics, Beijing Institute of Technology, Beijing
	100081, P.R. China}
\email{6120150035@bit.edu.cn}
\author[Zizhou Tang]{Zizhou Tang}\address{Chern Institute of Mathematics $\&$ LPMC, Nankai University, Tianjin 300071, P. R. China}
\email{zztang@nankai.edu.cn}
\author[Wenjiao Yan]{Wenjiao Yan}\address{School of Mathematical Sciences, Laboratory of Mathematics and Complex Systems, Beijing Normal University, Beijing, 100875, P. R. China}
\email{wjyan@bnu.edu.cn}
\thanks {$^{\dag}$ the corresponding author}
\thanks {The project is partially supported by the NSFC (No. 12271038, 12371048), Nankai Zhide Foundation, and the Fundamental Research Funds for the Central Universities (2233300002 )}%and Beijing Institute of Technology Research Fund Program for Young Scholars.}
\subjclass[2010] { 32Q60, 53C15, 53C30.}
\keywords{isoparametric foliation, complex structure.}
\begin{document}

\maketitle

\begin{abstract}
	Explicit representations of complex structures on closed manifolds are valuable, but relatively rare in the literature. Using isoparametric theory,
	we construct complex structures on isoparametric hypersurfaces with $g=4, m=1$ in the unit sphere, as well as on focal submanifolds $M_+$ of OT-FKM type with $g=4$, $m=2$ and $m=4$ in the definite case. Furthermore, we investigate the existence and non-existence of invariant (almost) complex structures on homogeneous isoparametric hypersurfaces with $g=4$, providing a complete classification of those that admit such structures. Finally, we discuss the geometric properties of the complex structures arising from isoparametric theory.

\end{abstract}

\section{\textbf{Introduction}}\label{sec1}

Determining whether a given closed differentiable manifold admits a complex structure is a challenging problem, as the Cauchy-Riemann equation becomes over-determined in higher dimensions.
Known examples of complex manifolds include complex submanifolds in complex projective spaces, Hopf manifolds, Calabi-Eckmann manifolds (\cite{CE53}), the product of two odd-dimensional Brieskorn homotopy spheres (\cite{BV68}), and twistor spaces of C. Taubes' self-dual 4-manifolds. Beyond these, few examples of complex manifolds are known.

To study the existence of complex structures on a given manifold, one first introduces the concept of an almost complex structure and then explores its integrability. An almost complex structure $J$ on a differentiable manifold
$M$ is an endomorphism of the tangent bundle, $J: TM\rightarrow TM$ with $J^2=-Id$. A manifold equipped with such a structure is called almost complex. %On any complex manifold, there is a natural almost complex structure induced by complex coordinates.
It is clear that an almost complex manifold must be both even-dimensional and orientable. An orientable manifold $M^{2n}$ admits an almost complex structure if and only if the structure group of $TM$ reduces to the unitary group $U(n)$, making the existence of an almost complex structure a topological question. For example, Borel and Serre \cite{BS53} showed that
$S^2$ and $S^6$ are the only spheres admitting almost complex structures.
The next question is whether an almost complex structure on $M$ is induced by a complex structure. If so, the almost complex structure is called integrable, or simply a complex structure.
The Newlander-Nirenberg theorem asserts that an almost complex structure $J$ is integrable if and only if the corresponding Nijenhuis tensor $N(X, Y)$ vanishes everywhere, where
\begin{equation}\label{N}
	N(X, Y)=[JX, JY]-[X, Y]-J[JX, Y]-J[X, JY], \quad X, Y\in\mathfrak{X}(M)
\end{equation}
Unfortunately, despite having the Newlander-Nirenberg theorem, it remains unknown whether there exists a complex structure on $S^6$ (cf. \cite{Bla53}, \cite{LeB87}, \cite{BH99}, \cite{Tan06}, \cite{TY24}).

An intriguing problem that has captivated geometers and topologists for decades is finding the obstructions to the existence of complex structures on almost complex manifolds. In real dimension $4$, such obstructions are understood through the Enriques-Kodaira classification of complex surfaces, the Chern number inequality, and Seiberg-Witten invariants from gauge theory. In this case, there are many examples of $4$-dimensional closed almost complex manifolds that do not admit complex structures. %However, for dimensions greater than 4, it remains unknown whether there exists any closed almost complex manifold which admits no complex structures.
For higher dimensions, S. T. Yau conjectured in his ``Open problems in Geometry" ( as the 52nd problem ) that any closed almost complex manifold with dimension greater than 4 admits a complex structure, though there is limited evidence supporting this conjecture. It is worth noting that Tang and Yan \cite{TY22} proved the existence of an 8-dimensional closed manifold $N^8$ such that $S^6\times N^8$ admits a complex structure, regardless of the existence of a complex structure on $S^6$.

In this paper, we explore the interaction between (almost) complex structures and isoparametric theory, focusing on the existence and construction of certain (almost) complex structures on isoparametric hypersurfaces and focal submanifolds in unit spheres.% using the geometry and topology of isoparametric theory.

We begin with a brief overview of isoparametric theory. Isoparametric hypersurfaces in unit spheres are those with constant principal curvatures, making them among the simplest and most elegant hypersurfaces, rich in geometric and topological information.
%We first give a brief introduction of isoparametric theory. Isoparametric hyersurfaces in unit spheres are hypersurfaces with constant principal curvatures, which are simplest and most beautiful hypersurfaces with rich information.
The foundational work in this theory was contributed by E. Cartan and H.-F. M\"{u}nzner. Let $M^n$ be an isoparametric hypersurface in $S^{n+1}(1)$ with $g$ distict principal curvatures $\cot{\theta_k}$, $0<\theta_{1}<\cdots<\theta_{g}<\pi$, and corresponding multiplicities $m_{k}$. According to M\"{u}nzner \cite{Mun81}, the relations $\theta_{k}=\theta_{1}+\frac{k-1}{g}\pi$, $m_{k}=m_{k+2}$(subscripts mod $g$) hold, and
$g$ can only take values in $1, 2,3, 4$ or $6$. The classification of isoparametric hypersurfaces in unit spheres, achieved through the efforts of numerous topologists and geometers, was completed in 2020 (see \cite{CCJ07}, \cite{Imm08}, \cite{Miy13}, \cite{Chi13}, \cite{Chi20}). Specifically, these hypersurfaces are either homogeneous or of OT-FKM type. Among them, those with four distinct principal curvatures, especially the OT-FKM type, exhibit the richest topology and geometry (see \cite{TY13}, \cite{TY15}, \cite{QT16}, \cite{QTY23}).

Now we introduce the isoparametric family of OT-FKM type.
Given a symmetric Clifford system $\{P_0,\cdots, P_m\}$ on $\mathbb{R}^{2l}$, i.e., $P_{i}$'s are symmetric matrices satisfying $P_{i}P_{j}+P_{j}P_{i}=2\delta_{ij}I_{2l}$, the following homogeneous polynomial $F$ of degree $4$ on
$\mathbb{R}^{2l}$ is constructed (cf. \cite{FKM81}):
%generalizing Ozeki-Takeuchi's result (), Ferus, Karcher and M\"{u}nzner () constructed a
\begin{equation}\label{FKM isop. poly.}
	\begin{split}
		&\qquad F:\quad \mathbb{R}^{2l}\longrightarrow \mathbb{R}\\
		&F(x) = |x|^4 - 2\sum_{i = 0}^{m}{\langle
			P_{i}x,x\rangle^2}.
	\end{split}
\end{equation}
The restricted function $f=F|_{S^{2l-1}(1)}$ takes values in $\mathrm{Im}f=[-1,1]$.
A regular level set $M^{2l-2}$ of $f$ is called \emph{an isoparametric hypersurface of OT-FKM type}, with $4$ dinstinct principal curvatures. The singular level sets, $M_+=f^{-1}(1)$ and $M_-=f^{-1}(-1)$, are called \emph{focal submanifolds}. The multiplicities of the principal curvatures of $M^{2l-2}$ are $(m_1, m_2, m_1, m_2)$, where $(m_1, m_2)=(m, l-m-1)$ and
$l=k\delta(m)$. Here, $k$ is a positive integer such that $l-m-1>0$ and $\delta(m)$ is the dimension of the irreducible module of the Clifford algebra $\mathcal{C}_{m-1}$, as shown in the following table:
\begin{center}
	\begin{tabular}{|c|c|c|c|c|c|c|c|c|cc|}
		\hline
		$m$ & 1 & 2 & 3 & 4 & 5 & 6 & 7 & 8 & $\cdots$ &$m'$+8 \\
		\hline
		$\delta(m)$ & 1 & 2 & 4 & 4 & 8 & 8 & 8 & 8 &$\cdots$ &16$\delta(m')$\\
		\hline
	\end{tabular} .
\end{center}
According to \cite{FKM81}, when $m\not \equiv 0 ~( \mathrm{mod}~4)$, there is exactly one type of OT-FKM type isoparametric family;
when $m\equiv 0~(\mathrm{mod} ~4)$, there are two types, distinguished by $\mathrm{Trace}(P_0P_1\cdots P_m)$: the \emph{definite} family, where $P_0P_1\cdots P_m=\pm I_{2l}$, and the \emph{indefinite} family, where $P_0P_1\cdots P_m\neq\pm I_{2l}$. There are exactly $[\frac{k}{2}]$ non-congruent indefinite families.
Moreover, an isoparametric hypersurface
$M^{2l-2}$ of OT-FKM type is an $S^{m_1}$ $(S^{m_2})$-bundle over $M_+$ $(M_-)$, and $M_+$ is an $S^{l-m-1}$-bundle over $S^{l-1}$. Specifically, the normal bundle of $M_+$ in $S^{2l-1}$ is trivial, with $\{P_0x,\cdots, P_mx\}$ serving as a basis for the normal sections at $x\in M_+$. Consequently, $M^{2l-2}$ is diffeomorphic to $S^m\times M_+$. (cf. \cite{Wan88}, \cite{QT16}).

%Let $\mathfrak{h}(\mathbb{R}^{2l})$ resp. $\mathcal{O}(\mathbb{R}^{2l})$ denote
%the symmetric resp. skew-symmetric endomorphisms of $\mathbb{R}^{2l}$.
%For orthonormal $Q_1,\cdots Q_r\in\Sigma(P_0,\cdots, P_m)$, since $Q_iQ_j=-Q_jQ_i$ for all $i\neq j$, we have
%\begin{eqnarray}
%&&Q_1\cdots Q_r\in \mathfrak{h}(\mathbb{R}^{2l}),\quad for~ r\equiv 0,1~ (\rm{mod}~4)\label{ho}\\
%&&Q_1\cdots Q_r\in \mathcal{O}(\mathbb{R}^{2l}),\quad for~ r\equiv 2,3~ (\rm{mod}~4).\nonumber
%\end{eqnarray}

In this paper, we first clarify the existence of almost complex structures on isoparametric hypersurfaces and focal submanifolds of OT-FKM type:

\begin{prop}\label{almost complex}
	In an isoparametric family with $g=4$ in $S^{n+1}(1)$, the following results hold:
	\begin{enumerate}
		\item Every isoparametric hypersurface admits an almost complex structure.
		\item Every even-dimensional focal submanifold $M_+$ of OT-FKM type admits an almost complex structure.
		\item An even-dimensional focal submanifold $M_-$ of OT-FKM type admits an almost complex structure if and only if $M_-$ is diffeomorphic to a product of two odd-dimensional spheres.
	\end{enumerate}
\end{prop}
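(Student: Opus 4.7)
My plan is to exploit the fibration structure of the isoparametric foliation and the Clifford algebra action in the OT-FKM case to either construct the required almost complex structure explicitly or to detect the obstructions to its existence.

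\textbf{For part (i):} The tangent bundle of $M^n$ decomposes canonically as $TM = E_1 \oplus E_2 \oplus E_3 \oplus E_4$ into principal curvature distributions. Since $g = 4$ forces $m_1 = m_3$ and $m_2 = m_4$, it suffices to produce natural bundle isomorphisms $\phi \colon E_1 \to E_3$ and $\psi \colon E_2 \to E_4$; setting $J = \phi \oplus \psi \oplus (-\phi^{-1}) \oplus (-\psi^{-1})$ then yields $J^2 = -\mathrm{Id}$. In the OT-FKM case, such isomorphisms are furnished by the Clifford action, which intertwines curvature distributions with matched indices at the same base point. For the two exceptional homogeneous families with $(m_1, m_2) = (2,2)$ and $(4,5)$, the isomorphisms are produced directly from the homogeneous description, using the invariance of the curvature distributions under the isotropy representation.

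\textbf{For part (ii):} The normal framing $\{x, P_0 x, \ldots, P_m x\}$ trivializes the normal bundle of $M_+$ in $\mathbb{R}^{2l}$, so
\[
TM_+ \oplus \mathbb{R}^{m+2} \cong M_+ \times \mathbb{R}^{2l},
\]
and $TM_+$ is stably trivial. When $\dim M_+ = 2l - m - 2$ is even (equivalently $m$ is even), the plan is to construct an explicit skew-symmetric $J$ on $\mathbb{R}^{2l}$, built from a suitable product of the Clifford generators $P_i$, that squares to $-\mathrm{Id}$ and preserves the normal frame at every $x \in M_+$; then $J$ restricts to an almost complex structure on $TM_+$. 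The preservation property reduces to a direct Clifford-algebraic computation, organized according to the value of $m \bmod 4$.

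\textbf{For part (iii):} The ``if'' direction is the classical Calabi-Eckmann construction \cite{CE53}: any product of two odd-dimensional spheres admits a complex structure, hence an almost complex structure. For the ``only if'' direction, the map
\[
\sigma \colon M_- \longrightarrow S^m, \quad x \longmapsto \bigl(\langle P_0 x, x\rangle, \ldots, \langle P_m x, x\rangle\bigr),
\]
realizes $M_-$ as the unit sphere bundle of the rank-$l$ vector bundle $\mathcal{E} \to S^m$ whose fiber over $e \in S^m$ is the $(+1)$-eigenspace of $P(e) := \sum_i e_i P_i$. The even-dimensionality of $M_-$ together with the OT-FKM constraints $l = k\delta(m)$ forces $m$ odd and $l$ even. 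By Bott periodicity, $\pi_{m-1}(SO) = 0$ for every odd $m$, and since $l$ is in the stable range throughout the OT-FKM construction, the classifying element of $\mathcal{E}$ in $\pi_{m-1}(SO(l))$ vanishes. Hence $\mathcal{E}$ is trivial, $M_- \cong S^m \times S^{l-1}$, and both factors are odd-dimensional.

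\textbf{Main obstacle.} I expect part (ii) to pose the most technical work: explicitly writing down a Clifford-algebraic endomorphism $J$ that both squares to $-\mathrm{Id}$ and preserves the $(m+2)$-dimensional normal frame at every point of $M_+$ requires a delicate choice of product of the $P_i$'s, with case analysis in $m$ (notably $m = 2$ and the $m = 4$ definite case). Part (i) for the two exceptional homogeneous families is also a subtle point, as the curvature distribution isomorphisms must be produced globally from the homogeneous structure rather than from any Clifford action.
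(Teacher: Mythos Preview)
Your plan contains two genuine gaps.

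\textbf{Part (i).} Your construction requires a global bundle isomorphism $\psi\colon E_2\to E_4$ between the principal distributions of multiplicity $m_2$. This is precisely the content of Question~\ref{isomorphic} in the paper, which is stated there as \emph{open in general}; only the isomorphism $E_1\cong E_3$ is established (Lemma~2.1, via the Clifford-weighted map $P=\sum_i\langle P_ix,x\rangle P_i$). So your approach to (i) rests on an unproven hypothesis. The paper instead invokes the classification: for OT-FKM hypersurfaces it cites Theorem~1.6 of \cite{QTY23}, and for the two exceptional homogeneous families $(2,2)$ and $(4,5)$ it checks that the isotropy representation factors through $U(n)\hookrightarrow SO(2n)$ using the restricted root decomposition worked out later in the proof of Theorem~\ref{homogeneous g=4}.

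\textbf{Part (iii), ``only if''.} Your argument never uses the hypothesis that $M_-$ admits an almost complex structure; you try to show that every even-dimensional $M_-$ is already a product of odd spheres. That conclusion is too strong, and the step ``by Bott periodicity, $\pi_{m-1}(SO)=0$ for every odd $m$'' is false: the stable groups are $\pi_{i}(SO)=\mathbb{Z}/2,\mathbb{Z}/2,0,\mathbb{Z},0,0,0,\mathbb{Z}$ with period $8$, so $\pi_{m-1}(SO)=\mathbb{Z}/2$ whenever $m\equiv 1\pmod 8$ (e.g.\ $m=1,9,17,\dots$). In those cases the clutching element of the eigenbundle $\mathcal{E}\to S^m$ need not vanish and $M_-$ need not be a product. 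The paper's route is different: it combines Theorem~1.6 of \cite{QTY23} (which characterises the almost complex $M_-$) with Sutherland's obstruction result \cite{Sut65}, and refers to Theorem~2.1 of \cite{QTY23} for the explicit list of $M_-$ that are products of odd spheres.

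For part~(ii) your stable-triviality observation is correct and close in spirit to \cite{QTY23}, but expecting a single product of the $P_i$'s to preserve the full normal frame for all even $m$ is optimistic: even in the cases $m=2$ and $m=4$ treated explicitly in Section~\ref{M_+ with m=2,4}, the Clifford endomorphism $P_0P_1$ must be modified on a low-dimensional summand of $T_xM_+$ to obtain an honest almost complex structure.
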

%\begin{rem}
%	By the classification of isoparametric foliations in unit spheres, if $g=1, 2, 3$ or $6$, then each isoparametric hypersurface with $g$ distinct principal curvatures must be homogeneous. Moreover, there exist not only almost complex structures, but slao complex structures, on even -dimensional isoparametric hypersurfaces in these cases. %the existence or non-existence of almost complex structures in this case have already been known.
%\end{rem}

By part (iii) of the preceding proposition, the focal submanifold $M_-$ is almost complex if and only if  it is a Calabi-Eckmann manifold, which admits a complex structure (cf. \cite{CE53}). As a consequence, we only need to focus on the existence of complex structures on isoparametric hypersurfaces $M$ and focal submanifolds $M_+$ of OT-FKM type. As the isoparametric hypersurfaces with $g=4, \min\{m_1, m_2\}=1$ are of OT-FKM type (cf. \cite{Tak76}, \cite{CCJ07}, \cite{Imm08}), we establish the following theorem by generalizing Cartan's moving frame method \cite{Car40}:

\begin{thm}\label{m=1}
	For $(g, m_1, m_2)=(4, 1, l-2)$, the isoparametric hypersurface $M$ admits a complex structure $J:TM\rightarrow TM$ such that $J{\mathcal{D}}_1={\mathcal{D}}_3$ and $J{\mathcal{D}}_2={\mathcal{D}}_4$, where $\mathcal{D}_i$s are principal distributions corresponding to the principal curvatures $\cot \theta_i$, %with respect to 4 distinct principal curvatures
	respectively.
\end{thm}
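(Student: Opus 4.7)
The plan is to build $J$ explicitly in the OT-FKM realization of $M \subset S^{2l-1} \subset \mathbb{R}^{2l}$ with symmetric Clifford system $\{P_0, P_1\}$, and then to verify integrability by Cartan's moving-frame technique. The natural ambient candidate is $J_0 := P_0 P_1 \in \mathrm{End}(\mathbb{R}^{2l})$: since $P_0, P_1$ are symmetric, squared-identity and anticommuting, $J_0$ is orthogonal, skew-symmetric and satisfies $J_0^2 = -I_{2l}$. The first step is to produce a smooth adapted orthonormal frame $\{e_1, e_2^{\alpha}, e_3, e_4^{\alpha}\}_{\alpha = 1}^{l-2}$ on (open subsets of) $M$ spanning $\mathcal{D}_1, \mathcal{D}_2, \mathcal{D}_3, \mathcal{D}_4$ respectively, built algebraically from $x$, the unit normal $\nu(x)$ and the vectors $P_i x, P_i \nu$. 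That $m_1 = 1$ makes $\mathcal{D}_1, \mathcal{D}_3$ line bundles is precisely what makes such closed-form expressions feasible.

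Next I would define the almost complex structure pointwise by
\begin{equation*}
J e_1 = e_3, \quad J e_3 = -e_1, \quad J e_2^{\alpha} = e_4^{\alpha}, \quad J e_4^{\alpha} = -e_2^{\alpha},
\end{equation*}
which immediately yields $J^2 = -\mathrm{Id}$, $J\mathcal{D}_1 = \mathcal{D}_3$ and $J\mathcal{D}_2 = \mathcal{D}_4$. The conceptual heart of this choice is that it should coincide with a suitable version of $J_0|_{T_x M}$: since the angular gap $\theta_3 - \theta_1 = \theta_4 - \theta_2 = \pi/2$ between paired principal curvatures is exactly a quarter turn along the normal circle of the isoparametric foliation, Clifford multiplication by $P_0 P_1$ will carry principal directions for angle $\theta_k$ to those for $\theta_{k+2}$. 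Checking this compatibility shows that the definition is independent of the frame choice and globally extends over $M$.

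The remaining and main task is to check integrability via the Newlander--Nirenberg theorem, i.e.\ to show that the Nijenhuis tensor $N(X, Y)$ in (\ref{N}) vanishes on every pair of principal sections. Generalizing Cartan's moving-frame analysis of isoparametric hypersurfaces, I would compute the connection $1$-forms of the adapted frame from the Gauss--Weingarten equations, the Codazzi equation (greatly simplified by constancy of the principal curvatures) and the M\"unzner relation $\theta_k = \theta_1 + (k-1)\pi/4$. This yields explicit expressions for the brackets $[e_i, e_j]$ as linear combinations of the frame vectors with structure coefficients determined by the isoparametric data; substituting these into the formula for $N$ reduces integrability to an algebraic identity among the structure coefficients.

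The main obstacle will be the subsequent case analysis of $N(X, Y)$ for all choices of $X, Y$ among the four principal distributions. The asymmetry between the $1$-dimensional $\mathcal{D}_1, \mathcal{D}_3$ and the $(l-2)$-dimensional $\mathcal{D}_2, \mathcal{D}_4$ forces splitting into several qualitatively different sub-cases (pure $\mathcal{D}_1\oplus\mathcal{D}_3$, pure $\mathcal{D}_2\oplus\mathcal{D}_4$, and the mixed cases), and the cancellations in each case are delicate, depending critically on the $\pi/2$ angular gap that $J$ implements between paired distributions. Organising the bookkeeping so that no term is missed, and exhibiting the cancellation in a conceptually transparent manner, is precisely where the generalization of Cartan's classical moving-frame method must be worked out in detail.
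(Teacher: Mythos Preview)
Your proposed almost complex structure is precisely the one the paper calls $\widetilde{J}$, and the paper proves explicitly that $\widetilde{J}$ is \emph{not} integrable. The issue is the following: by defining $J$ via an \emph{orthonormal} adapted frame as $Je_1=e_3$, $Je_2^{\alpha}=e_4^{\alpha}$, you are forcing $J$ to be compatible with the induced metric (sending unit principal vectors to unit principal vectors). In the parametrization $\varphi_t(e^{\sqrt{-1}\theta};u,v)=e^{\sqrt{-1}\theta}(\cos t\,u+\sqrt{-1}\sin t\,v)$ used in the paper, this metric-compatible candidate is exactly
\[
\widetilde{J}\Big(\tfrac{1}{\sqrt2}e^{\sqrt{-1}\theta}(v-\sqrt{-1}u)\Big)=\tfrac{1}{\sqrt2}e^{\sqrt{-1}\theta}(v+\sqrt{-1}u),
\qquad
\widetilde{J}\big(e^{\sqrt{-1}\theta}e_j\big)=e^{\sqrt{-1}\theta}\sqrt{-1}e_j,
\]
which is indeed the ``suitable version of $J_0|_{T_xM}$'' you describe, with $J_0=P_0P_1$ on the $J_0$-invariant complement $\mathfrak{D}$ and a hand-built rotation on $\mathrm{Span}\{\sqrt{-1}x,\sqrt{-1}\xi\}$. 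Using Chern's integrability criterion the paper computes, for the associated $(1,0)$-form $\theta_3=\cos t\,\omega_{13}+\sqrt{-1}\sin t\,\omega_{23}$, that $d\theta_3\not\equiv 0~\mathrm{mod}~(\theta,\theta_3,\dots,\theta_l)$; hence $N_{\widetilde J}\not\equiv 0$. So the cancellations you anticipate in the Nijenhuis case analysis will \emph{not} occur for this $J$.

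What actually works is a \emph{non-isometric} modification: the paper rescales $\widetilde{J}$ on each principal distribution by the factors $-\cot(t+\tfrac{\pi}{4})$, $-\cot(t+\tfrac{3\pi}{4})$, $-\cot(t+\tfrac{\pi}{2})$, $\cot t$, respectively. With these weights the $(1,0)$-coframe becomes $\theta=-(d\theta-\omega_{12})-\sqrt{-1}(d\theta+\omega_{12})$ and $\theta_j=\omega_{1j}+\sqrt{-1}\omega_{2j}$ (no $\cos t$, $\sin t$ in front), and then the Maurer--Cartan equations $d\omega_{ij}=\sum_k\omega_{ik}\wedge\omega_{kj}$ give $d\theta,\,d\theta_j\equiv 0~\mathrm{mod}~(\theta_3,\dots,\theta_l)$ directly. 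In the paper's one-parameter family $J_{\lambda,\mu}$ this corresponds to $\mu^2=1$, whereas your orthonormal choice forces the wrong scaling on $\mathcal{D}_2\oplus\mathcal{D}_4$. So the missing idea in your plan is that the integrable $J$ with $J\mathcal{D}_1=\mathcal{D}_3$, $J\mathcal{D}_2=\mathcal{D}_4$ cannot be orthogonal for the induced metric; one must first discover the correct rescalings, and the clean way to see both the failure of $\widetilde{J}$ and the success of $J$ is via the $(1,0)$-coframe criterion rather than a direct Nijenhuis bracket computation.
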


Using the algebraic structure of symmetric Clifford system, we construct explicit integrable almost complex structures to show
\begin{thm}\label{M+ complex}
	The focal submanifold $M_+$ of OT-FKM type in the following cases admit complex structures:
	\begin{enumerate}
		\item $M_+$ of OT-FKM type with $m=2$. Consequently, the isoparametric hypersurface $M\cong M_+\times S^2$ is a complex manifold.
		\item $M_+$ of OT-FKM type with $m=4$ in the definite case.
	\end{enumerate}
	
\end{thm}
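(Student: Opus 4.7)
The plan is to construct explicit almost complex structures $J$ on $M_+$ from the symmetric Clifford system $\{P_0,\ldots,P_m\}$ in each case, and to verify integrability via the Newlander--Nirenberg theorem by showing that the Nijenhuis tensor \eqref{N} vanishes. Throughout I will use the ambient description $M_+ = \{x \in S^{2l-1} : \langle P_i x, x\rangle = 0,\ 0 \le i \le m\}$ with tangent space $T_x M_+ = V_x^\perp$, where $V_x := \mathrm{span}\{x, P_0 x, \ldots, P_m x\}$, together with the canonical trivialization of the normal bundle of $M_+$ by the sections $\{P_0 x,\ldots,P_m x\}$.

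For case (i), $m = 2$, the key algebraic input is the triple product $e := P_0 P_1 P_2$, i.e.\ the volume element of the Clifford subalgebra generated by $\{P_0, P_1, P_2\}$. By direct computation one verifies $e^\top = -e$, $e^2 = -I_{2l}$, and $[e, P_i] = 0$ for each $i$; in particular $e$ is a Clifford-invariant complex structure on $\mathbb{R}^{2l}$. Using the defining equations of $M_+$, I would check that $e V_x \perp V_x$, so that $e V_x \subset T_x M_+$, giving the orthogonal decomposition $T_x M_+ = e V_x \oplus W_x$ with $W_x := (V_x \oplus e V_x)^\perp$ being $e$-invariant. The plan is to define $J$ pointwise by $J|_{W_x} := e|_{W_x}$, and on $e V_x$ by a basis prescription such as $J(ex) := e P_0 x$, $J(e P_1 x) := e P_2 x$, extended via $J^2 = -I$. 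This yields a smooth almost complex structure on $M_+$, and hence on $M \cong M_+ \times S^2$ by pairing with the standard $J$ on $S^2$. For case (ii), $m = 4$ definite, the relation $P_0 P_1 P_2 P_3 P_4 = \pm I_{2l}$ effectively reduces the Clifford action to one generated by $\{P_0, P_1, P_2, P_3\}$, endowing $\mathbb{R}^{2l}$ with a compatible quaternionic module structure. From this I would extract antisymmetric complex structures (arising as suitable products $P_i P_j$) and mimic the construction of case (i): decompose $T_x M_+$ into an ambient-invariant complement and a Clifford-orbit piece, and prescribe $J$ on each via the ambient complex structure and a basis formula, respectively. The definite relation is essential to ensure the basis formula closes consistently.

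Integrability will be checked via Newlander--Nirenberg: extend any tangent fields $X, Y \in \mathfrak{X}(M_+)$ to an open neighborhood in $\mathbb{R}^{2l}$ and compute the Nijenhuis tensor $N(X,Y)$ from \eqref{N} using Euclidean derivatives. On the ambient-invariant complement $W_x$, vanishing of $N$ is essentially automatic, since $J$ restricts there from a constant ambient complex structure. The hard part will be the Clifford-orbit piece of $T_x M_+$: there $J$ is defined via the moving frame $\{x, P_i x, e x, P_i e x\}$, so differentiating $J$ produces correction terms whose cancellation requires the systematic application of the Clifford relations $P_i P_j + P_j P_i = 2\delta_{ij} I$ together with the defining equations $\langle P_i x, x\rangle = 0$ for $x \in M_+$. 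In case (ii), the definite identity $P_0 \cdots P_4 = \pm I$ should play a crucial role in the final cancellation, distinguishing this case from the indefinite one.
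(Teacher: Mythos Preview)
Your overall strategy coincides with the paper's: split $T_xM_+$ into a ``Clifford--orbit'' piece and an invariant complement, let $J$ be a constant Clifford product on the complement and a basis prescription on the orbit piece, then check Nijenhuis via Euclidean differentiation and the relations $P_iP_j+P_jP_i=2\delta_{ij}I$.  The specific implementation differs, however.  For $m=2$ the paper does \emph{not} use the central element $e=P_0P_1P_2$; it uses the even product $P_0P_1$ on the complement $E$ of the \emph{two}--dimensional orbit $\mathrm{span}\{P_0P_1x,\,P_0P_1P_2x\}$, with $J(P_0P_1x)=P_0P_1P_2x$ on that piece.  Your choice of $e$ (central, skew, $e^2=-I$) gives a different almost complex structure with a four--dimensional orbit $eV_x$; it also turns out to be integrable, and the centrality of $e$ makes the invariance of $W_x$ slightly cleaner, at the cost of twice as many basis cases on the orbit.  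For $m=4$ definite the paper again takes $P_0P_1$ on the complement $V$ of the four--dimensional orbit $\mathrm{span}\{P_0P_1x,\,P_2P_3x,\,P_2P_4x,\,P_3P_4x\}$, with the basis formula $J(P_0P_1x)=P_3P_4x$, $J(P_2P_3x)=P_2P_4x$; the definite relation $P_0\cdots P_4=\pm I$ is used exactly to show that these four orbit vectors are mutually orthogonal and tangent.  This is consistent with, and sharper than, your sketch.

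One caution: your claim that vanishing of $N$ on the complement is ``essentially automatic'' skips the step that carries the real content in the paper's computation.  Even with $J$ equal to a constant ambient matrix there, one must first verify that $[JX,Y]+[X,JY]$ has no component in the orbit piece before applying $J$ to it; otherwise $J$ of that bracket is not simply the ambient product.  In the paper this is done by explicitly computing inner products against the orbit basis vectors, and it is precisely here (and in the mixed orbit--complement terms) that the Clifford identities and, for $m=4$, the definite relation do their work.  Your outline should flag this as the actual crux rather than the orbit--orbit terms, which in fact reduce to short linear--algebra identities.
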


%\begin{rem}
%	For the precise definition of $J$ in Theorem \ref{m=1}, we refer to Section \ref{M1}. While it is natural to extend Theorem \ref{m=1} to the general case where $m\geq 2$, identifying a suitable candidate for the almost complex structure proves to be challenging. Additionally, applying the moving frame method to verify the integrability condition becomes increasingly complicated. The complex structures in Theorem \ref{M+ complex} are constructed using symmetric Clifford systems, as detailed in Section \ref{M_+ with m=2,4}.
%\end{rem}

Next, we turn to the homogeneous isoparametric family and examine the existence of invariant (almost) complex structures. For focal submanifold $M_+$ of OT-FKM type with $m=8$ in the definite case, which is the octonionic Stiefel manifold $V_2(\mathbb{O}^n)$ (\cite{Jam58}, \cite{QTY22}), we show the following

\begin{prop}\label{m=8 focal}
	The focal submanifold $M_+$ with $(g, m_1, m_2)=(4, 8, 7)$ of OT-FKM type in the definite case admits a transitive action by $Spin(9)$ and is diffeomorphic to $Spin(9)/G_2$. Moreover, $M_+$ does not admit any $Spin(9)$-invariant almost complex structure.
\end{prop}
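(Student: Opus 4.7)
The plan is to first establish the transitive $Spin(9)$-action on $M_+$ (which gives the identification $M_+\cong Spin(9)/G_2$), and then translate the invariant almost complex structure question into a representation-theoretic obstruction on the isotropy slice.

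For the transitive action, I would recall that in the definite case with $m=8$ the Clifford system $\{P_0,\ldots,P_8\}$ on $\mathbb{R}^{32}$ makes $Spin(9)$ act orthogonally on $\mathbb{R}^{32}$ and cover the natural $SO(9)$-action on the $9$-plane $\mathrm{span}\{P_0,\ldots,P_8\}$ via $g^{-1}P_i g=\sum_j A_{ij}P_j$ with $A=A(g)\in SO(9)$. Orthogonality of $A$ then yields
\[
\sum_{i=0}^{8}\langle P_i\,gx,\,gx\rangle^{2}=\sum_{i=0}^{8}\langle P_i x,x\rangle^{2},
\]
so $Spin(9)$ preserves the defining polynomial $F$ and hence $M_+=\{x\in S^{31}:\langle P_ix,x\rangle=0,\ i=0,\ldots,8\}$. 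At a suitable basepoint $p\in M_+$, I would read the isotropy off the sphere fibrations $Spin(9)/Spin(7)=S^{15}$ and $Spin(7)/G_2=S^7$: fixing a unit vector $v_1\in\mathbb{R}^{16}$ (one of the spin-module copies in $\mathbb{R}^{32}$) reduces to $Spin(7)$, and fixing a second unit vector $v_2$ in the $\Delta_7$-summand of its $15$-dimensional orthogonal complement cuts this down to $G_2$. The dimension match $\dim Spin(9)-\dim G_2=36-14=22=\dim M_+$ forces the orbit to fill $M_+$, giving $M_+\cong Spin(9)/G_2$, consistent with the octonionic Stiefel manifold description $V_2(\mathbb{O}^2)$ from \cite{Jam58,QTY22}.

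For non-existence of an invariant almost complex structure, a $Spin(9)$-invariant $J$ on $M_+\cong Spin(9)/G_2$ is the same datum as a $G_2$-equivariant endomorphism $J\colon\mathfrak{m}\to\mathfrak{m}$ with $J^2=-\mathrm{Id}$, where $\mathfrak{m}$ is any reductive complement of $\mathfrak{g}_2$ in $\mathfrak{spin}(9)$. The decisive step is to compute $\mathfrak{m}$ as a $G_2$-module. From the chain $G_2\subset Spin(7)\subset Spin(9)$, the fibration $Spin(9)/Spin(7)=S^{15}$ gives $\mathfrak{spin}(9)\cong\mathfrak{spin}(7)\oplus\Delta_7\oplus V_7$ as $Spin(7)$-modules, with $\Delta_7$ and $V_7$ the $8$-dim spin and $7$-dim vector representations. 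Restricting further to $G_2$, the classical identities $\mathfrak{spin}(7)=\mathfrak{g}_2\oplus V$, $\Delta_7=\mathbb{R}\oplus V$, and $V_7=V$ (with $V$ the $7$-dim fundamental of $G_2$ on $\mathrm{Im}\,\mathbb{O}$) assemble to
\[
\mathfrak{m}\ \cong\ \mathbb{R}\,\oplus\,V\,\oplus\,V\,\oplus\,V.
\]

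Schur's lemma now furnishes the obstruction: since $V$ is absolutely irreducible over $\mathbb{R}$, every $G_2$-equivariant endomorphism of $\mathfrak{m}$ restricts to a real scalar on the $1$-dim trivial summand, and no real scalar $\lambda$ satisfies $\lambda^{2}=-1$. (Equivalently, on the $V$-isotypic part, $J$ would be given by a $3\times 3$ real matrix, which cannot satisfy $A^{2}=-I$ since its odd-degree characteristic polynomial forces a real eigenvalue.) Hence no $Spin(9)$-invariant almost complex structure on $M_+$ exists. I expect the main obstacle to be the clean extraction of the $G_2$-decomposition $\mathfrak{m}\cong\mathbb{R}\oplus 3V$; once this pattern is in hand, either the solitary trivial summand or the odd multiplicity of $V$ instantly blocks $J^{2}=-\mathrm{Id}$.
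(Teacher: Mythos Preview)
Your proposal is correct and follows essentially the same approach as the paper: both identify $M_+\cong Spin(9)/G_2$, decompose the isotropy representation as a trivial line plus three copies of the $7$-dimensional irreducible $G_2$-module, and invoke Schur's Lemma to rule out an equivariant $J$ with $J^2=-\mathrm{Id}$. The paper merely asserts this decomposition, whereas you derive it cleanly via the branching chain $G_2\subset Spin(7)\subset Spin(9)$, which is a welcome addition.
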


\begin{rem}
	By Proposition \ref{m=8 focal}, it seems impossible to construct complex structures on focal submanifolds $M_+$ of OT-FKM type for $m\geq 8$ by using similar arguments as in Section \ref{M_+ with m=2,4} based on algebraic structure of symmetric Clifford systems.
\end{rem}

Moreover, we get

\begin{thm}\label{homogeneous g=4}
	For homogeneous isoparametric hypersurfaces with $g=4$,
	\begin{enumerate}
		\item When $(m_1, m_2)=(4, 4k-5)$ ($k\geq 2$), each homogeneous isoparametric hypersurface $M^{8k-2}$ corresponding to the symmetric pair $(Sp(k+2), Sp(2)\times Sp(k))$
		admits no $Sp(2)\times Sp(k)$-invariant almost complex structure.
		\item When $(m_1, m_2)\neq(4, 4k-5)$ (any positive integer $k>1$ ), each homogeneous isoparametric hypersurface admits an invariant complex structure.
	\end{enumerate}
\end{thm}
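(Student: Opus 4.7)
\emph{Plan for the proof.} For part (i), my plan is to realize $M^{8k-2}$ as the homogeneous space $(Sp(2) \times Sp(k))/L$, where $L \cong Sp(1) \times Sp(1) \times Sp(k-2)$ is the principal isotropy subgroup of the $s$-representation of $(Sp(k+2), Sp(2) \times Sp(k))$ on $\mathfrak{p} \cong M_{2 \times k}(\mathbb{H})$; the two $Sp(1)$ factors sit block-diagonally inside $Sp(2)$, and $L$ embeds diagonally into $K = Sp(2) \times Sp(k)$ via the inclusion $Sp(2) \times Sp(k-2) \hookrightarrow Sp(k)$. The crucial step is to decompose $T_pM \cong \mathfrak{k}/\mathfrak{l}$ into $L$-isotypic components. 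A direct calculation produces two three-dimensional antidiagonal summands $\mathfrak{n}_a$, $\mathfrak{n}_b$ (each isomorphic to the adjoint representation of one of the $Sp(1)$ factors, with the remaining factors acting trivially), an eight-dimensional summand from the off-diagonal blocks in $\mathfrak{sp}(2)$ and in the upper-left $\mathfrak{sp}(2) \subset \mathfrak{sp}(k)$, and (for $k \geq 3$) two further summands of real dimension $4(k-2)$. Since $\mathfrak{n}_a$ and $\mathfrak{n}_b$ are pairwise non-isomorphic irreducible $L$-modules, Schur's lemma forces any $L$-equivariant $J$ to preserve each of them; but an odd-dimensional real vector space admits no endomorphism squaring to $-\Id$, which rules out the existence of an $L$-equivariant (hence $K$-invariant) almost complex structure.

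For part (ii), I analyze the remaining homogeneous $g = 4$ families case by case. When $m_1 = 1$, the hypersurface is of OT-FKM type with $m = 1$, and Theorem \ref{m=1} produces a complex structure via a moving-frame construction applied to the principal distributions $\mathcal{D}_i$; since this construction is canonical in the isoparametric data, the resulting $J$ is automatically preserved by the transitive isometry group. When $m_1 = 2$ and $m_2 > 2$, the hypersurface is OT-FKM with $m = 2$, and Theorem \ref{M+ complex}(i) gives the product splitting $M \cong M_+ \times \mathbb{C}P^1$ of complex manifolds, whose product complex structure is invariant. The family $(m_1, m_2) = (2, 2)$, from $(SO(5) \times SO(5), \Delta SO(5))$, is identified with the full flag manifold $Sp(2)/T^2$, which carries well-known invariant complex structures via Borel--Weil theory. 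The exceptional families $(m_1, m_2) = (4, 5)$ and $(6, 9)$, corresponding to $(SO(10), U(5))$ and $(E_6, U(1) \cdot Spin(10))$ respectively, are handled by realizing $M$ as the total space of a principal $T^2$-bundle over a Hermitian flag manifold of $K$ (obtained by enlarging the isotropy to its full block-diagonal centralizer) and constructing the invariant complex structure by a Calabi--Eckmann-type prescription lifting the K\"ahler structure on the base.

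The main technical obstacle will lie in verifying integrability of the invariant almost complex structures for the exceptional cases $(4, 5)$ and $(6, 9)$. In these families $\mathrm{rank}\, L < \mathrm{rank}\, K$, so Koszul's classical criterion for invariant complex structures on generalized flag manifolds does not apply directly; instead, one must exploit the torus bundle description, computing the Nijenhuis tensor on the vertical (toral) and horizontal (K\"ahler base) components separately, and using the compatibility between the Chern classes of the bundle and the K\"ahler form of the base to conclude vanishing.
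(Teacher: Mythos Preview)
Your treatment of part~(i) is essentially the paper's argument: the paper also identifies the principal isotropy as the diagonal $Sp(1)\times Sp(1)$ inside $Sp(2)\times Sp(2)\subset Sp(2)\times Sp(k)$ (times $Sp(k-2)$), reads off the isotropy decomposition of $\mathfrak{m}$ via the restricted root spaces $\mathfrak{k}_{\alpha_1},\mathfrak{k}_{2\alpha_1},\mathfrak{k}_{\alpha_2},\mathfrak{k}_{2\alpha_2},\mathfrak{k}_{\alpha_1\pm\alpha_2}$, observes that the two three-dimensional pieces $\mathfrak{k}_{2\alpha_1}\not\cong\mathfrak{k}_{2\alpha_2}$ are inequivalent irreducibles, and concludes by Schur. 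Your ``antidiagonal'' summands $\mathfrak{n}_a,\mathfrak{n}_b$ are precisely $\mathfrak{k}_{2\alpha_1},\mathfrak{k}_{2\alpha_2}$.

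Part~(ii), however, has a genuine gap in the $(m_1,m_2)=(2,2k-3)$ case. You invoke the diffeomorphism $M\cong M_+\times S^2$ from Theorem~\ref{M+ complex}(i) and assert that the product complex structure is $K$-invariant for $K=S(U(2)\times U(k))$. But the diffeomorphism $M\cong M_+\times S^2$ is not \emph{a priori} $K$-equivariant for any product action, so a product complex structure has no reason to be $K$-invariant. Worse, the paper later shows (Proposition~\ref{invariant property}(ii)) that the complex structure on $M_+$ built from the Clifford system is $U(l)$-invariant but explicitly \emph{not} invariant under the larger $U(2)\times U(l)$ coming from the symmetric pair; this blocks your shortcut. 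The paper instead treats this family (and likewise $(SO(k+2),SO(2)\times SO(k))$ and $(SO(10),U(5))$) by writing down the restricted root decomposition of $\mathfrak{k}=\mathfrak{k}_0\oplus\mathfrak{m}$, parametrizing all $K$-invariant almost complex structures via Schur's lemma, and computing the Nijenhuis tensor on each pair of root spaces to extract an explicit integrability condition (e.g.\ $\mu_2^2=1$), exhibiting a concrete invariant complex structure. For $(2,2)$ it cites the flag-manifold result (your $Sp(2)/T^2\cong SO(5)/T^2$ is fine), and for $(6,9)$ it cites Wang's Theorem~VII directly rather than reproving it.

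Your $m_1=1$ step is actually correct---the paper confirms in Proposition~\ref{invariant property}(i) that the complex structure of Theorem~\ref{m=1} is $SO(2)\times SO(l)$-invariant---but your justification (``canonical in the isoparametric data'') is too vague; the paper still carries out the restricted-root computation for this case independently. Your Calabi--Eckmann torus-bundle scheme for $(4,5)$ and $(6,9)$ is in the spirit of Wang's construction and could be made to work, but the paper avoids this for $(4,5)$ by the same direct Nijenhuis computation as above, and for $(6,9)$ simply quotes Wang. To repair your argument you should replace the $m_1=2$ product claim with the restricted-root-space analysis.
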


\begin{rem}
	\begin{enumerate}
		\item Homogeneous isoparametric hypersurfaces are classified by \cite{HL71} and \cite{TT72}. When $g=4$, the multiplicities $(m_1, m_2)=(1, k-2), (2, 2k-3), (4, 4k-5), (2, 2), (4, 5), (6, 9)$. In the case $(g, m_1, m_2)=(4, 2, 2)$, the isoparametric hypersurface is diffeomorphic to $SO(5)/T^2$ with Euler characteristic $\chi(SO(5)/T^2)=8$, and the number of invariant complex strucutres is equal to the order of the Weyl group $|W(SO(5))|=8$.  By Theorem B of \cite{Wan54}, any simply connected closed manifold with vanishing Euler characteristic admits either no homogeneous complex structure or non-countably many inequivalent ones. For the other cases with $g=4$ and $(m_1, m_2)\neq (2, 2)$, each isoparametric hypersurface has vanishing Euler characteristic. Therefore, according to part (ii) of Theorem \ref{homogeneous g=4}, each simply connected homogeneous isoparametric hypersurface with $g=4$ and $(m_1, m_2)\neq (2, 2), (4, 4k-5)$ (any positive integer $k>1$) admits non-countably many inequivalent homogeneous complex structures.
		In fact, this result can also be proven directly by carefully studying the invariant complex structures in our proof.\vspace{2mm}
		
		\item For the case $(g, m_1, m_2)=(4, 4, 4k-5)$, the isoparametric hypersurface $M$ is diffeomorphic to $S^4\times M_+$, it is unknown whether $M$ admits a complex structure (as it is not so symmetric).
	\end{enumerate}
\end{rem}

Finally, the special geometric structure of the complex structure associated with isoparametric theory will be discussed in Propositions 7 and 8 in Section \ref{Sec5}.
\vspace{2mm}

\section{\textbf{Almost Complex Structures and Isomorphic Principal Distributions}}%Almost complex structures on isoparametric hypersurfaces and focal submanifolds}

As for the existence of almost complex structures on isoparametric hypersurfaces  and focal submanifolds of OT-FKM type, we clarify
\vspace{3mm}

\noindent
\textbf{Proposition \ref{almost complex}}.
\emph{	In an isoparametric family with $g=4$ in the unit sphere, the following results hold:
\begin{enumerate}
	\item Every isoparametric hypersurface admits an almost complex structure.
	\item Every even-dimensional focal submanifold $M_+$ of OT-FKM type admits an almost complex structure.
	\item An even-dimensional focal submanifold $M_-$ of OT-FKM type admits an almost complex structure if and only if $M_-$ is diffeomorphic to a product of two odd-dimensional spheres.
\end{enumerate}}

\begin{proof}
For part (i), according to the classification, each isoparametric hypersurface with $g=4$ must be of OT-FKM type or homogeneous with $(m_1, m_2)=(2, 2), (4,5).$ By Theorem 1.6 of \cite{QTY23},
each isoparametric hypersurface of OT-FKM type admits an almost complex structure.
If $(m_1, m_2)=(2, 2), (4,5)$, then the isoparametric hypersurfaces are homogeneous, and diffeomorphic to $SO(5)/T^2$ and $U(5)/(SU(2)\times SU(2)\times U(1))$, respectively. Recall that a homogeneous space $G/H$ with $\mathrm{dim}~G/H=2n$ admits an invariant almost complex structure if and only if the isotropy representation can be factored through the standard inclusion of $U(n)$ in $SO(2n)$. In the proof of Theorem \ref{homogeneous g=4} for the $(SO(5)\times SO(5), \triangle SO(5))$ and $(SO(10), U(5))$ cases, the isotropy representations are well understood by using the restricted root decomposition. It follows that the isoparametric hypersurface with $(m_1, m_2)=(2, 2)$ or $(4,5)$ must admit an invariant almost complex structure.

Moreover, Theorem 1.6 of \cite{QTY23} also implies part (ii). Finally, part (iii) can be obtained by combining Theorem 1.6 of \cite{QTY23} and Corollary 1.3 of \cite{Sut65}. Furthermore, such $M_-$ are classified in Theorem 2.1 of \cite{QTY23}.
\end{proof}

From this point onward, we will focus on the existence of complex structures on isoparametric hypersurfaces with $g=4$ and focal submanifolds $M_+$ of OT-FKM type.

Although every isoparametric hypersurface with $g=4$ admits almost complex structures,  finding a special integrable one remains challenging, even if it exists. We aim to leverage the geometry of isoparametric theory for insight into this problem. To this end, we examine the properties of the principal distributions of isoparametric hypersurfaces $M$ of OT-FKM type.
Without loss of generality, assume $M := F^{-1}(0) \cap S^{2l-1}(1)$ for convenience.
The unit normal vector $\xi$ at $x \in M$ is given by:
\begin{equation}\label{xi}
\xi(x) = x - 2 \sum_{i=0}^m \langle P_i x, x \rangle P_i x :=x-2\eta(x).
\end{equation}
Clearly,  $X \in T_x M$ if and only if $\langle X, x \rangle = \langle X, \eta(x) \rangle = 0.$

The tangent space can be decomposed as the direct sum of the principal distributions:
$$T_x M = \mathcal{D}_1 \oplus  \mathcal{D}_2 \oplus  \mathcal{D}_3 \oplus  \mathcal{D}_4$$
corresponding to the principal curvatures
\begin{eqnarray*}
\lambda_1=	\text{cot} \left( \frac{\pi}{8}  \right)=\sqrt{2} + 1,&&\lambda_2=\text{cot} \left( \frac{\pi}{8} + \frac{\pi}{4} \right)= \sqrt{2} - 1\\
\lambda_3=\text{cot} \left( \frac{\pi}{8}+ \frac{\pi}{2} \right)=1 - \sqrt{2}, &&\lambda_4=\text{cot} \left( \frac{\pi}{8}+ \frac{3\pi}{4} \right)= - (\sqrt{2}+1).
\end{eqnarray*}
Clearly, $\xi(x)$ is the transport from $x\in M$ at distance $t=\frac{\pi}{2}$ along the normal geodesic of $M$. Therefore, we have the following result:
\begin{prop}
$\xi: M \to M$ is a diffeomorphism.
\end{prop}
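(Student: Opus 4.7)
The plan is to show that $\xi$ is smooth, that $\xi(M)\subset M$, and that $\xi\circ\xi=-\mathrm{Id}_M$. Since $F$ is homogeneous of even degree, the antipodal map preserves $M$, so the identity $\xi^2=-\mathrm{Id}_M$ will automatically supply a smooth inverse $-\xi$ (also mapping $M$ to $M$), while smoothness of $\xi$ itself is clear from the polynomial formula \eqref{xi}. Together these give the diffeomorphism claim.

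That $\xi(M)\subset M$ follows from the remark made just before the statement: $\xi(x)$ is the value at $t=\pi/2$ of the great-circle normal geodesic $\gamma_x(t)=\cos(t)\,x+\sin(t)\,\xi(x)$ in $S^{2l-1}$, and by M\"unzner's formula with $g=4$ the composition $f\circ\gamma_x$ has the form $\cos(4t+\phi_x)$, of period $\pi/2$; since $f(x)=0$, it follows that $f(\xi(x))=0$ as well.

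For the key identity $\xi\circ\xi=-\mathrm{Id}_M$, I would compute directly. Setting $a_i:=\langle P_ix,x\rangle$, so that $\eta(x)=\sum_i a_iP_ix$ and $\sum_i a_i^2=(1-f(x))/2=1/2$ on $M$, two immediate consequences of $P_iP_j+P_jP_i=2\delta_{ij}I$ drive the calculation: $P_iP_j$ is antisymmetric for $i\neq j$, and $P_jP_iP_k$ is antisymmetric for $i,j,k$ distinct; hence $\langle x,P_iP_jx\rangle=0$ and $\langle x,P_jP_iP_kx\rangle=0$ in those cases. A short expansion using these vanishings together with $\sum_i a_i^2=1/2$ gives $\langle P_i\xi(x),\xi(x)\rangle=-a_i$ and
\begin{equation*}
\sum_i a_i P_i\eta(x)\;=\;\sum_{i,j}a_ia_jP_iP_jx\;=\;\Bigl(\sum_i a_i^2\Bigr)x\;=\;\tfrac{1}{2}x,
\end{equation*}
the off-diagonal terms cancelling in pairs via $P_iP_j+P_jP_i=0$. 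Consequently $\eta(\xi(x))=-\eta(x)+x$, and $\xi(\xi(x))=\xi(x)-2\eta(\xi(x))=-x$, as required. The only real obstacle is the Clifford-algebra bookkeeping here -- tracking which triple products $P_jP_iP_k$ drop out versus which collapse to $\pm P_\ell$ after anticommuting -- but this is purely mechanical, with no conceptual difficulty anticipated.
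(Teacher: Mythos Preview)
Your argument is correct. The Clifford bookkeeping works exactly as you outline: the antisymmetry of $P_iP_j$ and of $P_jP_iP_k$ for distinct indices kills the unwanted cross terms, and one indeed obtains $\langle P_i\xi(x),\xi(x)\rangle=-a_i$, $\sum_i a_iP_i\eta(x)=\tfrac12 x$, hence $\xi\circ\xi=-\mathrm{Id}_M$. Together with smoothness of $\xi$ and invariance of $M$ under the antipodal map, this gives the diffeomorphism.

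The paper, by contrast, does not actually prove the proposition; it simply remarks that $\xi(x)=\phi_{\pi/2}(x)$ is the image of $x$ under the normal exponential flow at distance $\pi/2$, and implicitly invokes the standard fact from M\"unzner's theory that for an isoparametric hypersurface with $g$ distinct principal curvatures the map $\phi_t$ is a diffeomorphism onto the parallel hypersurface whenever $t$ is not a focal distance, and that the family has period $\pi/g=\pi/4$ (so $\phi_{\pi/2}(M)=M$). Your route is more self-contained---it avoids appealing to the global structure theory and instead extracts everything from the explicit polynomial formula for $\xi$ and the Clifford relations---at the cost of a short computation. It also makes the involution $\xi^2=-\mathrm{Id}_M$ explicit, which the geometric picture gives only implicitly via $\phi_\pi=-\mathrm{Id}$.
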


Furthermore, we can prove the following:

\begin{lem}
At any $x\in M$, the map $P: \mathcal{D}_1\rightarrow \mathcal{D}_3$, defined by $X\mapsto Y=P(X)$ with $P=\sum_{i=0}^m \langle P_i x, x \rangle P_i$, is an isomorphism between the principal distributions $\mathcal{D}_1$ and $\mathcal{D}_3$.
\end{lem}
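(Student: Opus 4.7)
The plan is to compute the shape operator of $M$ at $x$ explicitly in terms of $P$, and then read off the action of $P$ on the principal distributions directly from the eigenvalue equation. The starting point is two algebraic facts. First, a direct expansion $P^2 = \sum_{i,j}\langle P_ix,x\rangle\langle P_jx,x\rangle P_iP_j$ together with the Clifford relation $P_iP_j + P_jP_i = 2\delta_{ij}I$ shows that the off-diagonal terms cancel pairwise, leaving
\[
P^2 \;=\; \sum_{i}\langle P_ix,x\rangle^2\, I \;=\; \tfrac{1}{2}\bigl(|x|^4 - F(x)\bigr) I \;=\; \tfrac{1}{2} I
\]
on all of $\mathbb{R}^{2l}$ for $x\in M$. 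Second, since each $P_i$ (hence $P$) is symmetric, one checks $\langle PX, x\rangle = \langle X, \eta(x)\rangle = 0$ and $\langle PX, \eta(x)\rangle = \langle P^2X, x\rangle = \tfrac{1}{2}\langle X, x\rangle = 0$ for $X\in T_xM$, so $P$ restricts to an endomorphism of $T_xM$. In particular $P|_{T_xM}$ is injective.

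Next I would derive the shape operator of $M$ in $S^{2l-1}(1)$ with respect to $\xi = x - 2Px$. Differentiating $\xi(y) = y - 2\sum_i\langle P_iy,y\rangle P_iy$ along $X\in T_xM$ at $y=x$ and using $D_X\xi = -A_\xi X$ yields
\[
A_\xi X \;=\; -X + 4\,Q(X) + 2\,PX,
\]
where $Q(X) := \sum_i\langle P_ix,X\rangle P_ix$ is the orthogonal projection of $X$ onto $\mathcal{N}_x := \operatorname{span}\{P_0x,\ldots,P_mx\}$. An argument analogous to that for $P$ shows $Q$ maps $T_xM$ into itself, so $A_\xi$ is indeed a self-adjoint endomorphism of $T_xM$.

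The final step is a short algebraic manipulation. Since $\lambda_1 = \sqrt{2}+1$ and $\lambda_3 = 1-\sqrt{2}$ are the two roots of $t^2 - 2t - 1 = 0$, the subspace $\mathcal{D}_1\oplus\mathcal{D}_3$ is annihilated by $A_\xi^2 - 2A_\xi - I$. For $X\in\mathcal{D}_1$ the eigenvalue equation $A_\xi X = \lambda_1 X$ first yields $Q(X) = \tfrac{\sqrt{2}+2}{4}X - \tfrac{1}{2}PX$; applying $P$ and using $P^2 = \tfrac12 I$ gives $P(Q(X)) = \tfrac{\sqrt{2}+2}{4}PX - \tfrac{1}{4}X$. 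The identity $(A_\xi^2 - 2A_\xi - I)X = 0$, expanded via $A_\xi = -I + 4Q + 2P$, then determines $Q(PX)$ as an explicit linear combination of $X$ and $PX$. Plugging this back into
\[
A_\xi(PX) \;=\; -PX + 4\,Q(PX) + 2P^2 X \;=\; -PX + 4\,Q(PX) + X
\]
collapses everything to $A_\xi(PX) = (1-\sqrt{2})PX = \lambda_3 PX$, so $PX\in\mathcal{D}_3$. Combined with the injectivity of $P|_{T_xM}$ and the dimension equality $\dim\mathcal{D}_1 = \dim\mathcal{D}_3 = m$, this gives the asserted isomorphism.

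The main technical obstacle is the shape operator computation and the identification of the auxiliary projection $Q$; once the formula $A_\xi = -I + 4Q + 2P$ is in hand, the rest is bookkeeping driven by the factorization of the characteristic polynomial into $(t^2-2t-1)(t^2+2t-1)$.
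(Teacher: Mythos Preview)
Your proposal is correct and follows essentially the same approach as the paper's proof: both derive the formula $\xi_*X = X - 2PX - 4QX$ (equivalently your $A_\xi = -I + 4Q + 2P$), use $P^2 = \tfrac{1}{2}I$ and the projection property $Q^2 = Q$, and then verify that $PX$ lands in the eigenspace for $\lambda_3 = 1-\sqrt{2}$ by exploiting $\lambda_1^2 - 2\lambda_1 - 1 = 0$. The only organizational difference is that the paper computes $Q(PX)$ more directly---writing $PX = \tfrac{\lambda_1+1}{2}X - 2QX$ and applying $Q$ with $Q^2 = Q$ gives $Q(PX) = \tfrac{\lambda_1-3}{2}QX$ in one line---whereas you route this through expanding the operator identity $(A_\xi^2 - 2A_\xi - I)X = 0$, which is a slightly longer path to the same destination.
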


\begin{proof}
Note that the point at an oriented distance $t$ to $M$ along the
normal geodesic through $x$ can be defined as
$$\phi_{t}: M\rightarrow S^{n+1}(1),\quad x\mapsto \phi_{t}(x)=\cos t ~x+\sin t~\xi(x).$$
Clearly, when $g=4$ and $t=\frac{\pi}{2}$, $\phi_{\frac{\pi}{2}}: M\rightarrow M$  and $\phi_{\frac{\pi}{2}}(x)=\xi(x)$. Therefore, the tangent map is
$$\left(\phi_{\frac{\pi}{2}}\right)_*(X)=\xi_*(X)=\frac{\sin(\theta_i-\frac{\pi}{2})}{\sin\theta_i}X=-\cot\theta_i X=-\lambda_iX$$ for $X\in \mathcal{D}_i\subset T_xM$.

It suffices to show that for any $X \in \mathcal{D}_1$ (i.e., $\xi_{*} X = -\lambda_1 X$), the image $Y=P(X)$ satisfies $\xi_{*} Y = \frac{1}{\lambda_1} Y=-\lambda_3Y$, i.e., $Y \in \mathcal{D}_3$.

By a direct computation of the tangent map by (\ref{xi}), we obtain:
$$\xi_{*} X =X - 2 \sum \langle P_i x, x \rangle P_i X- 4 \sum \langle P_i x, X \rangle P_i x.$$
Rewriting, we define:
\begin{equation*}
	P(X) := PX=\sum_{i=0}^m \langle P_i x, x \rangle P_iX,\quad Q(X) := \sum_{i=0}^m \langle P_i x, X \rangle P_i x,
\end{equation*}
thus
$$\xi_{*} X:=X - 2 P(X)- 4 Q(X).$$
Moreover, it is satisfied that $P^2 = \frac{1}{2} I$ and $Q(X) = X^V$, $Q^2 = Q$, where $X^V$ is the projection of
$X$ onto $V = \text{Span} \{ P_0 x, \ldots, P_m x \}$.
Therefore, we have $P(X), Q(X) \in T_x M$ for all $X \in T_x M$.

Given $\xi_{*} X = -\lambda_1 X$, we have $-\lambda_1 X = X - 4 Q(X) - 2 P(X)$.
This implies that $Y=P(X) = \frac{\lambda_1 + 1}{2} X - 2 Q(X)$. Next, compute $\xi_{*}Y$ as follows:
\begin{eqnarray*}
	\xi_{*}Y&=&Y - 4 Q(Y) - 2 P(Y)\\
	&=&Y- 4 \left( \frac{\lambda_1 + 1}{2} Q(X) - 2 Q(X) \right) - 2 \left( \frac{1}{2} X \right) \\
	&=&P(X) - 2(\lambda_1 - 3) Q(X) - X.
\end{eqnarray*}
Therefore, $\xi_{*}Y = \frac{1}{\lambda_1} Y$ holds if and only if
$$(1 - \frac{1}{\lambda_1}) P(X) - 2 (\lambda_1 - 3) Q(X) - X = 0.$$
Rearranging, this becomes $(1 - \frac{1}{\lambda_1}) P(X) - 2 (\lambda_1 - 3) Q(X) = \frac{2}{\lambda_1 + 1} P(X) + \frac{4}{\lambda_1 + 1} Q(X)$, and further
$$\frac{\lambda_1^2 - 2\lambda _1- 1}{\lambda_1 (\lambda_1 + 1)} P(X) = 2 \frac{\lambda_1^2 - 2\lambda _1- 1}{\lambda_1 + 1} Q(X).$$
On the other hand,  since $\lambda_1 = \sqrt{2} + 1$, the identity $\lambda_1^2 - 2\lambda_1 - 1 = 0$ is satisfied. This completes the proof.
\end{proof}

Up to now, the following question remains unresolved in general:
\begin{ques}\label{isomorphic}
Are $\mathcal{D}_2$ and $\mathcal{D}_4$ isomorphic as vector bundles over $M$?
\end{ques}
\begin{rem}
According to P. 297 of Abresch \cite{Abr83}, $\mathcal{D}_2$ and $\mathcal{D}_4$ have the same total Stiefel-Whitney class.
\end{rem}
If the answer to Question \ref{isomorphic} is affirmative, then a natural follow-up question arises:
\begin{ques}\label{com str preserve}
Does there exist a complex structure $J$ such that $J\mathcal{D}_1=\mathcal{D}_3$ and $J\mathcal{D}_2=\mathcal{D}_4$?
\end{ques}

\section{\textbf{Complex Structures on the Isoparametric Hypersurfaces with $g=4, m=1$ }}\label{M1}
This section is dedicated to the proof of Theorem \ref{m=1}. We generalize Cartan's moving frame method, originally applied to isoparametric hypersurfaces with $(g, m_1, m_2)=(4, 1, 1)$, to the those with $(g, m_1, m_2)=(4, 1, l-2)$ (cf. \cite{Car40}).

When $m=1$, consider $\mathbb{C}^{l}=\mathbb{R}^{l}\oplus\mathbb{R}^{l}$ with the standard inner product $\langle \cdot, \cdot\rangle$.
According to \cite{FKM81}, up to isomorphism, we can express $P_0,\cdots, P_m$ as
\begin{equation*}\label{FKM}
P_0=\left(
\begin{matrix}
	I_l & 0 \\
	0 & -I_l \\
\end{matrix}\right),\;
P_1=\left(
\begin{matrix}
	~0 & I_l~\\
	~I_l & 0 ~\\
\end{matrix}\right),
P_{i+1}=\left(
\begin{matrix}
	0 & E_{i}\\
	-E_{i} & 0 \\
\end{matrix}\right),
~\text{for}~ 1 \leqslant i\leqslant m-1,
\end{equation*}
where $E_1, \cdots, E_{m-1}$ are representations of the generators of the Clifford algebra $\mathcal{C}_{m-1}$.
%Up to equivalence, choose $P_0, P_1$ as
%\begin{equation*}\label{FKM}
%	P_0=\left(
%	\begin{matrix}
%		I_l & 0 \\
%		0 & -I_l \\
%	\end{matrix}\right),\;
%	P_1=\left(
%	\begin{matrix}
%		~0 & I_l~\\
%		~I_l & 0 ~\\
%	\end{matrix}\right).
%\end{equation*}
For $x=u+\sqrt{-1}v\in \mathbb{C}^{l}$, the corresponding polynomial is
$$F(x) = |x|^4 - 2\langle
P_{0}x,x\rangle^2-2\langle
P_{1}x,x\rangle^2,$$
which induces an isoparametric function $f=F|_{S^{2l-1}(1)}$ on $S^{2l-1}(1)$ and an isoparametric family in $S^{2l-1}$ with $(g, m_1, m_2)=(4, 1, l-2)$. For $0<t<\frac{\pi}{4}$, define an immersion
$$\varphi_t: S^1\times V_2(\mathbb{R}^l)\rightarrow \mathbb{R}^{2l}, \quad (e^{\sqrt{-1}\theta};u,v)\mapsto e^{\sqrt{-1}\theta}\left(\cos t~u+\sqrt{-1}\sin t~v\right),$$
where $V_2(\mathbb{R}^l)$ is the Stiefel manifold of orthonormal 2-frames in $\mathbb{R}^l$.
This immersion yields the isoparametric hypersurface $M_t=\varphi_t(S^1\times V_2(\mathbb{R}^l))=f^{-1}(-\cos 4t)$ in $S^{2l-1}(1)$.
For $x=u+\sqrt{-1}v:=(u, v)\in V_2(\mathbb{R}^l)$, choose a local orthonormal frame $e_1, e_2,\cdots, e_l$ of $\mathbb{R}^l$ with respect to the standard inner product $\langle \cdot, \cdot \rangle$, such that $e_1=u, e_2=v$. For $1\leq i, j\leq l$, let $\omega_{ij}:=\langle de_i, e_j\rangle$ and $\omega=(\omega_{ij})_{1\leq i, j\leq l}$. Then  $d\omega=\omega\wedge\omega$ and $\omega^T=-\omega.$ Consequently, the differential of $\varphi_t$ is
\begin{eqnarray*}
d\varphi_t&=&\frac{1}{\sqrt{2}}e^{\sqrt{-1}\theta}(-v+\sqrt{-1}u)\alpha+\frac{1}{\sqrt{2}}e^{\sqrt{-1}\theta}(-v-\sqrt{-1}u)\beta+\sum_{j=3}^{l}e^{\sqrt{-1}\theta}e_j\cos t~\omega_{1j}\nonumber\\
&&+\sum_{j=3}^{l}e^{\sqrt{-1}\theta}\sqrt{-1}e_j\sin t~\omega_{2j},
\end{eqnarray*}
where $\alpha=\frac{\sin t+\cos t}{\sqrt{2}}(d\theta-\omega_{12})$ and $\beta=\frac{\sin t-\cos t}{\sqrt{2}}(d\theta+\omega_{12})$. Define a unit normal vector field of the immersion $\varphi_t$ by
$\xi:=e^{\sqrt{-1}\theta}(-\sin t~u+\sqrt{-1}\cos t~v).$ Then
\begin{eqnarray*}
d\xi&=&e^{\sqrt{-1}\theta}\sqrt{-1}\left(-\sin t~u+\sqrt{-1}\cos t~v\right)d\theta+e^{\sqrt{-1}\theta}\left(-\sin t~v-\sqrt{-1}\cos t~u\right)\omega_{12}\nonumber\\
&&-\sum_{j=3}^{l}e^{\sqrt{-1}\theta}\sin t~e_j\omega_{1j}+\sum_{j=3}^{l}e^{\sqrt{-1}\theta}\sqrt{-1}\cos t~e_j\omega_{2j}.
\end{eqnarray*}
Moreover, the first and second fundamental forms of $\varphi_t$ are given by
\begin{eqnarray*}
I_t&=&\alpha^2+\beta^2+\sum_{j=3}^{l}\phi_j^2+\sum_{j=3}^{l}\psi_j^2,\nonumber\\
II_t&=&\cot (t+\frac{\pi}{4})\alpha^2+\cot (t+\frac{3\pi}{4})\beta^2+\sum_{j=3}^{l}\cot(t+\frac{\pi}{2})\phi_j^2+\sum_{j=3}^{l}\cot t~\psi_j^2,
\end{eqnarray*}
where $\phi_j=\cos t~\omega_{1j}$ and $\psi_j=\sin t~\omega_{2j}$.
For $1\leq j\leq 4$, define $\lambda_j:=\cot (t+j\frac{\pi}{4})$. The principal distributions of the principal curvatures corresponding to $\lambda_j$ are as follows:
\begin{eqnarray*}
{\mathcal{D}}_1&=&\mathrm{Span}\{\frac{1}{\sqrt{2}}e^{\sqrt{-1}\theta}(v-\sqrt{-1}u)\},\nonumber\\
{\mathcal{D}}_2&=&\mathrm{Span}\{e^{\sqrt{-1}\theta}e_j~|~3\leq j\leq l\},\nonumber\\
{\mathcal{D}}_3&=&\mathrm{Span}\{\frac{1}{\sqrt{2}}e^{\sqrt{-1}\theta}(v+\sqrt{-1}u)\},\nonumber\\
{\mathcal{D}}_4&=&\mathrm{Span}\{e^{\sqrt{-1}\theta}\sqrt{-1}e_j~|~3\leq j\leq l\}.
\end{eqnarray*}
%are the principal distributions of the principal curvatures $\lambda_j$ respectively.

Define $J_0:=P_0P_1$. It induces an almost complex structure on $M_t$ as follows.
Since $J_0x=\sqrt{-1}x$ and $J_0\xi=\sqrt{-1}\xi \in TM_t$, we have an orthogonal decomposition $$TM_t=\mathrm{Span}\{\sqrt{-1}x, \sqrt{-1}\xi\}\oplus \mathfrak{D},$$
%where $\mathfrak{D}$ is the orthogonal complement of $\mathrm{Span}\{\sqrt{-1}x, \sqrt{-1}\xi\}$.
Clearly, $\mathfrak{D}$ is $J_0$-invariant. According to \cite{TY22}, an almost complex structure $\widetilde{J}$ on $M_t$ can be defined
by
\begin{eqnarray}\label{overline J}
&&\widetilde{J}(\sqrt{-1}x)=\sqrt{-1}\xi, ~~\,\,~\widetilde{J}(\sqrt{-1}\xi)=-\sqrt{-1}x, \\
&&\qquad \widetilde{J}(X)=J_0X=\sqrt{-1}X,~~\text{for} ~~X\in \mathfrak{D}. \nonumber
\end{eqnarray}
Thus we have $\widetilde{J}(\frac{1}{\sqrt{2}}e^{\sqrt{-1}\theta}(v-\sqrt{-1}u))=\frac{1}{\sqrt{2}}e^{\sqrt{-1}\theta}(v+\sqrt{-1}u)$ and  $\widetilde{J}(e^{\sqrt{-1}\theta}e_j)=e^{\sqrt{-1}\theta}\sqrt{-1}e_j$, which implies that $\widetilde{J}{\mathcal{D}}_1={\mathcal{D}}_3$.
and $\widetilde{J}{\mathcal{D}}_2={\mathcal{D}}_4$.
However, Yi Zhou proved in his thesis that $\widetilde{J}$ is not integrable. For completeness, we give here a direct proof using Chern's characterization for the integrability of almost complex structures(see P. 15 of \cite{Ch95}).
\begin{prop}
The almost complex structure $\widetilde{J}$ in (\ref{overline J}) on $M_t$ is not integrable.
\end{prop}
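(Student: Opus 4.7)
The plan is to apply Chern's structure-equation criterion: given a local basis $\{\eta^{1},\dots,\eta^{n}\}$ of $(1,0)$-forms on $(M_t,\widetilde{J})$, the almost complex structure is integrable if and only if each $d\eta^{a}$ has vanishing $(0,2)$-component. Using the orthonormal coframe $\{\alpha,\beta,\phi_j,\psi_j\}_{j=3}^{l}$ dual to the principal frame $\{E_\alpha,E_\beta,E_j^\phi,E_j^\psi\}$ spanning $\mathcal{D}_1,\mathcal{D}_3,\mathcal{D}_2,\mathcal{D}_4$ respectively, I would first verify that $\widetilde{J}$ acts on this frame as $\widetilde{J}E_\alpha = E_\beta$ and $\widetilde{J}E_j^\phi = E_j^\psi$. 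This follows by expanding $\sqrt{-1}x$ and $\sqrt{-1}\xi$ in the basis $\{E_\alpha,E_\beta\}$ and noting that the resulting $2\times 2$ transition matrix has determinant one, so the quarter-turn sending $\sqrt{-1}x\mapsto\sqrt{-1}\xi$ agrees with the quarter-turn $E_\alpha\mapsto E_\beta$. Consequently the natural $(1,0)$-basis is
\begin{equation*}
\eta_0 \;=\; \alpha + i\beta, \qquad \eta_j \;=\; \phi_j + i\psi_j, \quad j = 3,\dots,l.
\end{equation*}

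Next I would compute $d\alpha,d\beta,d\phi_j,d\psi_j$ via the Maurer--Cartan relation $d\omega_{ab} = \sum_c \omega_{ac}\wedge\omega_{cb}$ together with the identifications $\phi_j = \cos t\,\omega_{1j}$, $\psi_j = \sin t\,\omega_{2j}$. Both $d\alpha$ and $d\beta$ turn out to be real scalar multiples of $\sum_{k\geq 3}\phi_k\wedge\psi_k$, which is of type $(1,1)$ in the $\eta/\bar\eta$ basis, so $d\eta_0$ already has vanishing $(0,2)$-part and requires no further analysis. The decisive case is $d\eta_j$, which takes the schematic form
\begin{equation*}
d\eta_j \;=\; \omega_{12}\wedge\Bigl(\tfrac{\cos t}{\sin t}\psi_j - i\tfrac{\sin t}{\cos t}\phi_j\Bigr) \;+\; \sum_{k\geq 3,\,k\neq j}\eta_k\wedge\omega_{kj}.
\end{equation*}
The summation contributes only $(2,0)$ and $(1,1)$ terms because each $\eta_k$ is already $(1,0)$, so the whole $(0,2)$-component of $d\eta_j$ lives in the first summand.

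To extract this component I would write the real form $\omega_{12} = A\eta_0 + \bar A\bar\eta_0$ (by inverting the defining relations for $\alpha,\beta$) and similarly split $\tfrac{\cos t}{\sin t}\psi_j - i\tfrac{\sin t}{\cos t}\phi_j$ into its $\eta_j$/$\bar\eta_j$ parts; a short computation produces a $\bar\eta_j$-coefficient proportional to $\cot(2t)$, so the $(0,2)$-part of $d\eta_j$ is a nonzero scalar multiple of $\cot(2t)\,\bar A\,\bar\eta_0\wedge\bar\eta_j$. For $0<t<\pi/4$ one has $\cot(2t)\neq 0$, and $\bar A\neq 0$ since its imaginary part equals $\tfrac{1}{2\sqrt{2}(\sin t-\cos t)}$, a nonzero real number. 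Chern's criterion then yields non-integrability of $\widetilde{J}$. The main bookkeeping hurdle is precisely the clean $(1,0)/(0,1)$-decomposition of the real forms $\omega_{12}$, $\phi_j$ and $\psi_j$; once these decompositions are in place, the nonvanishing of the $(0,2)$-coefficient is immediate.
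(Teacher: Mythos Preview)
Your proposal is correct and follows essentially the same approach as the paper: both use Chern's structure-equation criterion with the $(1,0)$-forms $\theta_j=\phi_j+\sqrt{-1}\psi_j$ (the paper's $\theta=-\alpha-\sqrt{-1}\beta$ is just $-\eta_0$), and both locate the obstruction in the $\omega_{12}$-term of $d\theta_j$. Your version is in fact more explicit---the paper simply asserts that $\cos t\,\omega_{12}\wedge\omega_{23}+\sqrt{-1}\sin t\,\omega_{21}\wedge\omega_{13}\not\equiv 0\pmod{(\theta,\theta_3,\dots,\theta_l)}$, whereas you actually extract the $(0,2)$-coefficient $i\cot(2t)\,\bar A$ and verify it is nonzero.
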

\begin{proof}
By the definition of $\widetilde{J}$, we have
\begin{eqnarray*}
	&&\widetilde{J}(\frac{1}{\sqrt{2}}e^{\sqrt{-1}\theta}(v-\sqrt{-1}u))=\frac{1}{\sqrt{2}}e^{\sqrt{-1}\theta}(v+\sqrt{-1}u), \\
	&&\widetilde{J}(e^{\sqrt{-1}\theta}e_j)=e^{\sqrt{-1}\theta}\sqrt{-1}e_j,~3\leq j\leq l,
\end{eqnarray*}
%where  $\overline{J}(\frac{1}{\sqrt{2}}e^{\sqrt{-1}\theta}(v-\sqrt{-1}u))=\frac{1}{\sqrt{2}}e^{\sqrt{-1}\theta}(v+\sqrt{-1}u)$,
and
$$\frac{1}{\sqrt{2}}e^{\sqrt{-1}\theta}(v-\sqrt{-1}u), ~~~\widetilde{J}(\frac{1}{\sqrt{2}}e^{\sqrt{-1}\theta}(v-\sqrt{-1}u)),~~~ e^{\sqrt{-1}\theta}e_j,~~~ \widetilde{J}(e^{\sqrt{-1}\theta}e_j) (3\leq j\leq l)$$
is a  unitary frame. %For $e^{\sqrt{-1}\theta}e_j$, we have $\overline{J}(e^{\sqrt{-1}\theta}e_j)=e^{\sqrt{-1}\theta}\sqrt{-1}e_j,~3\leq j\leq l,$ is a unitary frame.
Define $\theta:=-\alpha-\sqrt{-1}\beta$, and $\theta_j:=\cos t~\omega_{1j}+\sqrt{-1}\sin t~\omega_{2j}, 3\leq j\leq l$. Then
$$d\theta_3=\cos t~\omega_{1k}\wedge\omega_{k3}+\sqrt{-1}\sin t~ \omega_{2k}\wedge\omega_{k3}.$$
Since
$$\cos t~\omega_{12}\wedge\omega_{23}+\sqrt{-1}\sin t~\omega_{21}\wedge\omega_{13}\in \mathrm{Span}_{\mathbb{C}}\{\theta\wedge \theta_3, \overline{\theta}\wedge \theta_3, \theta\wedge \overline{\theta_3}, \overline{\theta}\wedge \overline{\theta_3}\},$$
and
$$\cos t~\omega_{12}\wedge\omega_{23}+\sqrt{-1}\sin t~\omega_{21}\wedge\omega_{13} \not\equiv 0~\mathrm{mod}~(\theta, \theta_3,\cdots, \theta_l),$$
it follows that $d\theta_3\not\equiv 0~\mathrm{mod}~(\theta, \theta_3,\cdots, \theta_l).$
According to \cite{Ch95}, the almost complex structure $\widetilde{J}$ on $M_t$ is not integrable.
\end{proof}

Next, we modify the definition of $\widetilde{J}$ to get a new almost complex structure $J$ and show its integrability in Theorem \ref{m=1}.
To be precise, $J$ is defined by
\begin{eqnarray}\label{ new J}
J(\frac{1}{\sqrt{2}}e^{\sqrt{-1}\theta}(v-\sqrt{-1}u))&=&-\cot (t+\frac{\pi}{4})\frac{1}{\sqrt{2}}e^{\sqrt{-1}\theta}(v+\sqrt{-1}u),\nonumber\\
J(\frac{1}{\sqrt{2}}e^{\sqrt{-1}\theta}(v+\sqrt{-1}u))&=&-\cot (t+\frac{3\pi}{4})\frac{1}{\sqrt{2}}e^{\sqrt{-1}\theta}(v-\sqrt{-1}u),\\
J(X)&=&-\cot (t+\frac{\pi}{2})\widetilde{J}(X)~\mathrm{for}~X\in \mathcal{D}_2,\nonumber\\
J(X)&=&\cot (t)\widetilde{J}(X)~\mathrm{for}~X\in \mathcal{D}_4.\nonumber
\end{eqnarray}

%\begin{thm}\label{m=1}
\noindent
\textbf{Theorem \ref{m=1}.}
\emph{The almost complex structure $J$ defined in (\ref{ new J}) on the isoparametric hypersurface $M_t$ in $S^{2l-1}$ with $(g, m_1, m_2)=(4, 1, l-2)$ is integrable. Moreover, $J{\mathcal{D}}_1={\mathcal{D}}_3$ and $J{\mathcal{D}}_2={\mathcal{D}}_4$.}
\vspace{2mm}

%\end{thm}
\begin{proof}
By the definition of $J$,
\begin{eqnarray*}
	&&\frac{\sin t+\cos t}{2}e^{\sqrt{-1}\theta}(v-\sqrt{-1}u),~\,\,~\cos t~e^{\sqrt{-1}\theta}e_j,\\
	&&J(\frac{\sin t+\cos t}{2}e^{\sqrt{-1}\theta}(v-\sqrt{-1}u))=\frac{\sin t-\cos t}{2}e^{\sqrt{-1}\theta}(v+\sqrt{-1}u), \\
	&&J(\cos t~e^{\sqrt{-1}\theta}e_j)
	=\sin t~e^{\sqrt{-1}\theta}\sqrt{-1}e_j,~3\leq j\leq l,
\end{eqnarray*}
is a unitary frame. Then define
$\theta:=-(d\theta-\omega_{12})-\sqrt{-1}(d\theta+\omega_{12})$, and $\theta_j:=\omega_{1j}+\sqrt{-1}\omega_{2j}, 3\leq j\leq l$.
It follows that
\begin{eqnarray*}
	d\theta &=&d\omega_{12}-\sqrt{-1}d\omega_{12},\nonumber\\
	&=&(1-\sqrt{-1})\sum_{k=3}^{l}\omega_{1k}\wedge \omega_{k2},\nonumber\\
	&=&(\sqrt{-1}-1)\sum_{k=3}^{l}\theta_k\wedge \omega_{2k},\nonumber\\
	&\equiv& 0~\mathrm{mod}~(\theta_3, \cdots, \theta_l).
\end{eqnarray*}
Moreover, for $3\leq j\leq l$,
\begin{eqnarray*}
	d\theta_j &=&d\omega_{1j}+\sqrt{-1}d\omega_{2j},\nonumber\\
	&=&\sum_{k=1}^{l}\omega_{1k}\wedge \omega_{kj}+\sqrt{-1}\sum_{k=1}^{l}\omega_{2k}\wedge \omega_{kj},\nonumber\\
	&=&-\sqrt{-1}\omega_{12}\wedge(\omega_{1j}+\sqrt{-1}\omega_{2j})+\sum_{k=3}^{l}(\omega_{1k}+\sqrt{-1}\omega_{2k})\wedge \omega_{kj},\nonumber\\
	&=&-\sqrt{-1}\omega_{12}\wedge\theta_j+\sum_{k=3}^{l}\theta_k\wedge \omega_{kj},\nonumber\\
	&\equiv&0~\mathrm{mod}~(\theta_3, \cdots, \theta_l).
\end{eqnarray*}
According \cite{Ch95}, $J$ is integrable and the proof is finished.
\end{proof}

\begin{rem}
Theorem \ref{m=1} provides an affirmative answer to Question \ref{com str preserve} in the case $g=4, m=1$ .
\end{rem}

More generally, for two arbitrary non-zero real numbers $\lambda$ and $\mu$, we can define an almost complex structure
$J_{\lambda, \mu}$ on $M_t$ by
\begin{eqnarray}\label{new J'}
J_{\lambda, \mu}(\frac{1}{\sqrt{2}}e^{\sqrt{-1}\theta}(v-\sqrt{-1}u))&=&-\lambda\cot (t+\frac{\pi}{4})\frac{1}{\sqrt{2}}e^{\sqrt{-1}\theta}(v+\sqrt{-1}u),\nonumber\\
J_{\lambda, \mu}(\frac{1}{\sqrt{2}}e^{\sqrt{-1}\theta}(v+\sqrt{-1}u))&=&-\frac{1}{\lambda}\cot (t+\frac{3\pi}{4})\frac{1}{\sqrt{2}}e^{\sqrt{-1}\theta}(v-\sqrt{-1}u),\\
J_{\lambda, \mu}(X)&=&-\mu\cot (t+\frac{\pi}{2})\widetilde{J}(X)\quad \mathrm{for}~X\in \mathcal{D}_2,\nonumber\\
J_{\lambda, \mu}(X)&=&\frac{1}{\mu}\cot (t)\widetilde{J}(X)\quad \mathrm{for}~X\in \mathcal{D}_4.\nonumber
\end{eqnarray}
Proving in a manner similar to Theorem \ref{m=1}, we can obtain
\begin{cor}
The almost complex structure
$J_{\lambda, \mu}$ on $M_t$ defined in (\ref{new J'}) is integrable if and only if $\mu^2=1$.
\end{cor}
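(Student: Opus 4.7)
The plan is to follow the proof of Theorem \ref{m=1} and apply Chern's characterization of integrability to a $(1,0)$-coframe adapted to $J_{\lambda,\mu}$. First, one verifies $J_{\lambda,\mu}^2 = -\mathrm{Id}$: the factors $\lambda\cdot\lambda^{-1}$ and $\mu\cdot\mu^{-1}$ cancel between the paired formulas in (\ref{new J'}), and the trigonometric identities $\cot(t+\tfrac{\pi}{4})\cot(t+\tfrac{3\pi}{4})=-1$ and $\cot(t+\tfrac{\pi}{2})\cot t = -1$ close out the check for every nonzero $\lambda,\mu$.

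Next, setting $a = -\lambda\cot(t+\tfrac{\pi}{4})$ and working with the orthonormal coframe $(\alpha,\beta,\phi_j,\psi_j)$ of the first fundamental form, one takes (up to nonzero rescaling) the $(1,0)$-coframe
\begin{equation*}
\theta = \alpha + \frac{\sqrt{-1}}{a}\beta, \qquad \theta_j = \omega_{1j} + \frac{\sqrt{-1}}{\mu}\omega_{2j},\quad 3\le j\le l.
\end{equation*}
The computation of $d\theta$ runs exactly as in Theorem \ref{m=1}: the Maurer-Cartan identity $d\omega_{12} = \sum_{k\ge 3}\omega_{1k}\wedge\omega_{k2}$ rewrites, in the new basis, as a sum of multiples of $\theta_k\wedge\overline{\theta_k}$, so $d\theta\equiv 0 \pmod{\theta_3,\ldots,\theta_l}$ irrespective of $\lambda,\mu$.

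The decisive step is the $d\theta_j$ calculation. From the Maurer-Cartan equations on $V_2(\mathbb{R}^l)$,
\begin{equation*}
d\theta_j = \omega_{12}\wedge\Big(\omega_{2j} - \frac{\sqrt{-1}}{\mu}\omega_{1j}\Big) + \sum_{k\ge 3}\theta_k\wedge\omega_{kj},
\end{equation*}
and using $\omega_{1j}=\tfrac{1}{2}(\theta_j+\overline{\theta_j})$ and $\omega_{2j}=-\tfrac{\sqrt{-1}\mu}{2}(\theta_j-\overline{\theta_j})$, the bracketed factor simplifies to
\begin{equation*}
-\frac{\sqrt{-1}}{2}\left[\frac{\mu^2+1}{\mu}\theta_j + \frac{1-\mu^2}{\mu}\overline{\theta_j}\right].
\end{equation*}
The $\theta_j$-part combines with the second sum to give a term in the ideal $(\theta_3,\ldots,\theta_l)$. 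The $\overline{\theta_j}$-part is the obstruction: since $\omega_{12}$ is real with a genuine $\overline{\theta}$-component (read off by inverting the linear system $\alpha,\beta\leftrightarrow\theta,\overline{\theta}$), the wedge $\omega_{12}\wedge\overline{\theta_j}$ produces a nonzero $\overline{\theta}\wedge\overline{\theta_j}$ summand that cannot lie in the ideal generated by $\theta,\theta_3,\ldots,\theta_l$. Therefore Chern's criterion holds precisely when $1-\mu^2=0$; when $\mu^2=1$ the bracket collapses to $-\sqrt{-1}\mu^{-1}\theta_j$ and integrability follows verbatim from the Theorem \ref{m=1} argument.

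The main obstacle, I expect, is the clean accounting of the obstruction: one needs to track that the $\overline{\theta}\wedge\overline{\theta_j}$ contribution cannot be cancelled by any other summand of $d\theta_j$, which hinges on the linear independence of $\theta,\overline{\theta},\theta_k,\overline{\theta_k}$ as sections of $(T^{\mathbb{C}}M_t)^*$. The parameter $\lambda$ enters only through $a$ inside $\theta$, merely rescaling coefficients but never inducing cancellations, which is why the integrability obstruction isolates $\mu$ alone.
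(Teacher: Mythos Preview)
Your argument is correct and follows exactly the route the paper indicates (``proving in a manner similar to Theorem~\ref{m=1}''): you build the $(1,0)$-coframe $\theta,\theta_3,\ldots,\theta_l$ adapted to $J_{\lambda,\mu}$, verify $d\theta\equiv 0$ via the Maurer--Cartan relation for $d\omega_{12}$, and isolate the obstruction in $d\theta_j$ as the coefficient $\tfrac{1-\mu^2}{\mu}$ in front of $\omega_{12}\wedge\overline{\theta_j}$. The only place to tighten the write-up is your final paragraph: rather than saying the non-cancellation ``hinges on linear independence'', note explicitly that the sole remaining summand $\sum_{k\ge 3}\theta_k\wedge\omega_{kj}$ already lies in the ideal $(\theta_3,\ldots,\theta_l)$, while $\omega_{12}$---being a nonzero real combination of $d\vartheta$ and itself---has a nonvanishing $\overline{\theta}$-coefficient, so the $\overline{\theta}\wedge\overline{\theta_j}$ term survives modulo $(\theta,\theta_3,\ldots,\theta_l)$ precisely when $\mu^2\neq 1$.
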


\vspace{2mm}

\section{\textbf{Construction of Complex Structures on Focal Submanifolds}}\label{M_+ with m=2,4}

This section is dedicated to the proof of Theorem \ref{M+ complex}.
According to the expression of the polynomial $F$ in (\ref{FKM isop. poly.}), the focal submanifold $M_+=f^{-1}(1)$ of OT-FKM type is expressed as
\begin{eqnarray}
M_+
&=&\left\{x\in S^{2l-1}(1)~|~\langle P_0x, x\rangle=\langle P_1x, x\rangle=\cdots=\langle P_mx, x\rangle=0\right\}.	\label{M+}
\end{eqnarray}
In this section, we will construct complex structures on $M_+$ of OT-FKM type with $m=2$ in \ref{M+2}, \ref{M+2'} and with $m=4$ in the definite case in \ref{M+4}.

\subsection{\textbf{Complex structure on $M_+$ of OT-FKM type with $m=2$}}\label{M+2}

In this case, the focal submanifold $M_+$ of OT-FKM type can be expressed as
$$M_+=\{x\in\mathbb{R}^{2l}~|~|x|=1, \langle x, P_0x\rangle=\langle x, P_1x\rangle=\langle x, P_2x\rangle=0\}\subset \mathbb{R}^{2l}.$$
The normal space of $M_+$ in $\mathbb{R}^{2l}$ at $x\in M_+$ is spanned by $x, P_0x, P_1x, P_2x$, i.e.,
$$(T_xM_+)^{\perp}=\mathrm{Span}\{x, P_0x, P_1x, P_2x\}.$$
Using the property that $P_{i}P_{j}+P_{j}P_{i}=2\delta_{ij}I_{2l}$, we can verify that $P_0P_1x,P_0P_1P_2x, x, P_{\alpha}x$  $(\alpha=0,1,2)$ are perpendicular to each other. %$\langle P_0P_1x, x\rangle=\langle P_0P_1x, P_{\alpha}x\rangle=0$ and $\langle P_0P_1P_2x, x\rangle=\langle P_0P_1x, P_{\alpha}x\rangle=0$ for any $\alpha=0,1,2$.
Equivalently speaking, $P_0P_1x, P_0P_1P_2x\in T_xM_+$, %. Moreover, we have $\langle P_0P_1x, P_0P_1P_2x\rangle=0$. Therefore,
and $T_xM_+$ can be split into
$$T_xM_+=\textrm{Span}\{P_0P_1x\}\oplus\textrm{Span}\{P_0P_1P_2x\}\oplus E.$$
It is straightforward to verify that $P_0P_1X \in E$ for any $X\in E$. %is perpendicular to $x, P_0x$, $P_1x$, $P_2x$, $P_0P_1x$ and $P_0P_1P_2x$,  implying that $P_0P_1X\in E$.
This observation allows us to construct a global endomorphism $J$ of $TM_+$ as follows:
\begin{equation}\label{J2}
\begin{split}
	&\hspace{3cm}J:\quad T_xM_+\longrightarrow T_xM_+\\
	& JP_0P_1x:=P_0P_1P_2x, ~~JP_0P_1P_2x:=-P_0P_1x, ~~JX:=P_0P_1X ~\textrm{for}~X\in E.
\end{split}
\end{equation}
Evidently, $J$ is an almost complex structure on $M_+$. Furthermore, $J$ is compatible with the induced metric $ds^2$ on $M_+$, making it almost Hermitian.

Next, we will prove the integrability of the almost complex structure $J$ in (\ref{J2}). From the definition (\ref{N}) of Nijenhuis tensor, one sees that it satisfies the following properties for any $X, Y\in\mathfrak{X}(M)$:
\begin{equation}\label{N'}
	N(JX, Y)=N(X, JY)=-JN(X, Y), \quad N(JX, JY)=-N(X, Y).
\end{equation}
Then we need only to verify $N(X, Y)=0$ and $N(P_0P_1x, X)=0$ for any $X, Y\in E$.

From definition (\ref{J2}), it follows that
\begin{equation*}
N(X, Y)=[P_0P_1X, P_0P_1Y]-[X, Y]-J\left([P_0P_1X, Y]+[X, P_0P_1Y]\right).
\end{equation*}
Denote the Levi-Civita connection in $\mathbb{R}^{2l}$ by $D$. It is clear that
\begin{eqnarray*}
[P_0P_1X, P_0P_1Y]-[X, Y]&=&D_{P_0P_1X}P_0P_1Y-D_{P_0P_1Y}P_0P_1X-D_XY+D_YX\\
&=&P_0P_1D_{P_0P_1X}Y-P_0P_1D_{P_0P_1Y}X-D_XY+D_YX.
\end{eqnarray*}
Denote $W:=[P_0P_1X, Y]+[X, P_0P_1Y]$. Since $X, P_0P_1X, Y, P_0P_1Y\in E$, we have $W\in T_xM_+$. More precisely,
\begin{eqnarray*}
W&=&D_{P_0P_1X}Y-D_{Y}P_0P_1X+D_XP_0P_1Y-D_{P_0P_1Y}X\\
&=&D_{P_0P_1X}Y-P_0P_1D_{Y}X+P_0P_1D_XY-D_{P_0P_1Y}X,
\end{eqnarray*}
and
\begin{eqnarray*}
\langle W, P_0P_1x\rangle
&=&\langle D_{P_0P_1X}Y+ P_0P_1D_XY- P_0P_1D_{Y}X- D_{P_0P_1Y}X, ~~P_0P_1x\rangle \\
%	&=&\langle D_{P_0P_1X}Y, P_0P_1x\rangle + \langle P_0P_1D_XY, P_0P_1x\rangle -\langle P_0P_1D_{Y}X, P_0P_1x\rangle -\langle D_{P_0P_1Y}X, P_0P_1x\rangle \\
&=&-\langle Y, P_0P_1D_{P_0P_1X}x\rangle + \langle D_XY, x\rangle -\langle D_{Y}X, x\rangle +\langle X, P_0P_1D_{P_0P_1Y}x\rangle\\
&=&\langle Y, X\rangle-\langle Y, X\rangle+\langle X, Y\rangle-\langle X, Y\rangle\\
&=&0,
\end{eqnarray*}
\begin{eqnarray*}
\langle W, P_0P_1P_2x\rangle
&=&\langle D_{P_0P_1X}Y+ P_0P_1D_XY- P_0P_1D_{Y}X- D_{P_0P_1Y}X, ~~P_0P_1P_2x\rangle \\
&=&-\langle Y, P_0P_1P_2(P_0P_1X)\rangle + \langle D_XY, P_2x\rangle -\langle D_{Y}X, P_2x\rangle +\langle X, P_0P_1P_2(P_0P_1Y)\rangle\\
&=&\langle Y, P_2X\rangle-\langle Y, P_2X\rangle+\langle X, P_2Y\rangle-\langle X, P_2Y\rangle\\
&&=0.
\end{eqnarray*}
Therefore, $W\in E$ and $JW=P_0P_1W$. Thus
\begin{eqnarray*}
N(X, Y)&=&[P_0P_1X, P_0P_1Y]-[X, Y]-P_0P_1W\\
&=&P_0P_1D_{P_0P_1X}Y-P_0P_1D_{P_0P_1Y}X-D_XY+D_YX\\
&&-P_0P_1(D_{P_0P_1X}Y-P_0P_1D_{Y}X+P_0P_1D_XY-D_{P_0P_1Y}X)\\
&=&0.
\end{eqnarray*}

As for $N(P_0P_1x, X)$, we have
\begin{equation*}
N(P_0P_1x, X)=[P_0P_1P_2x, P_0P_1X]-[P_0P_1x, X]-J\left([P_0P_1P_2x, X]+[P_0P_1x, P_0P_1X]\right).
\end{equation*}
Denote $W':=[P_0P_1P_2x, X]+[P_0P_1x, P_0P_1X]\in T_xM_+$. We see
\begin{eqnarray*}
W'&=& D_{P_0P_1P_2x}X-D_XP_0P_1P_2x+D_{P_0P_1x}P_0P_1X-D_{P_0P_1X}P_0P_1x\\
&=&D_{P_0P_1P_2x}X-P_0P_1P_2X+P_0P_1D_{P_0P_1x}X+X,
\end{eqnarray*}
and
\begin{eqnarray*}
\langle W', P_0P_1x\rangle
&=&\langle D_{P_0P_1P_2x}X-P_0P_1P_2X+P_0P_1D_{P_0P_1x}X+X, ~~P_0P_1x\rangle \\
%	&=&\langle D_{P_0P_1X}Y, P_0P_1x\rangle + \langle P_0P_1D_XY, P_0P_1x\rangle -\langle P_0P_1D_{Y}X, P_0P_1x\rangle -\langle D_{P_0P_1Y}X, P_0P_1x\rangle \\
&=&-\langle X, P_0P_1(P_0P_1P_2x)\rangle - \langle X, P_2x\rangle -\langle X, P_0P_1x\rangle +\langle X, P_0P_1x\rangle\\
&=&0,%\langle X, P_2x\rangle-\langle X, P_2x\rangle-\langle X, P_0P_1x\rangle+\langle X,  P_0P_1x\rangle=0,
\end{eqnarray*}
\begin{eqnarray*}
\langle W', P_0P_1P_2x\rangle
&=&\langle D_{P_0P_1P_2x}X-P_0P_1P_2X+P_0P_1D_{P_0P_1x}X+X, ~~P_0P_1P_2x\rangle \\
%	&=&\langle D_{P_0P_1X}Y, P_0P_1x\rangle + \langle P_0P_1D_XY, P_0P_1x\rangle -\langle P_0P_1D_{Y}X, P_0P_1x\rangle -\langle D_{P_0P_1Y}X, P_0P_1x\rangle \\
%	&=&-\langle X, P_0P_1(P_0P_1P_2x)\rangle - \langle X, P_2x\rangle -\langle X, P_0P_1x\rangle +\langle X, P_0P_1x\rangle\\
&=&\langle X, x\rangle-\langle X, x\rangle-\langle X, P_0P_1P_2x\rangle+\langle X,  P_0P_1P_2x\rangle\\
&&=0.
\end{eqnarray*}
Therefore, $W'\in E$ and $JW_0'=P_0P_1W'$. Thus
\begin{eqnarray*}
N(P_0P_1x, X)&=&[P_0P_1P_2x, P_0P_1X]-[P_0P_1x, X]-P_0P_1W'\\
&=&P_0P_1D_{P_0P_1P_2x}X+P_2X-D_{P_0P_1x}X+P_0P_1X\\
&&-P_0P_1(D_{P_0P_1P_2x}X-P_0P_1P_2X+P_0P_1D_{P_0P_1x}X+X)\\
&=&0.
\end{eqnarray*}
Consequently, when $m=2$, the focal submanifold $M_+$ of OT-FKM type is a complex manifold.

\subsection{\textbf{An alternative complex structure on $M_+$ of OT-FKM type with $m=2$}}\label{M+2'}
In this subsection, we will show that $M_+$ of OT-FKM type with $m=2$ is diffeomorphic to a complex hypersurface of the modified Calabi-Eckmann manifold $S^{2k-1}\times S^{2k-1}$.

In fact, from the definition of the focal submanifold $M_+$ of OT-FKM type, we see that
\begin{eqnarray*}
M_+&\cong& N:=\{(x, y)\in S^{2k-1}\times S^{2k-1}~|~\langle x, y\rangle=\langle x, iy\rangle=0\}\\
&\subset& \widetilde{N}:=S^{2k-1}\times S^{2k-1}.
\end{eqnarray*}
%$M_+\cong N:=\{(x, y)\in S^{2k-1}\times S^{2k-1}~|~\langle x, y\rangle=\langle x, iy\rangle=0\}\subset \widetilde{N}=S_1^{2k-1}\times S_2^{2k-1}$.
To differentiate the positions of the two spheres, we add subscripts to them such that $\widetilde{N}=S_1^{2k-1}\times S_2^{2k-1}$. Embed $S^{2k-1}$ into $\mathbb{C}^k$, so we can split the tangent space $T_{(x, y)} S_1^{2k-1}\times S_2^{2k-1}$ at $(x, y)\in S_1^{2k-1}\times S_2^{2k-1}$ into $T_{(x, y)} S_1^{2k-1}\times S_2^{2k-1}=T_xS_1^{2k-1}\times T_yS_2^{2k-1}$ with $T_xS_1^{2k-1}=\textrm{Span}\{ix\}\oplus U$ and
$T_yS_2^{2k-1}=\textrm{Span}\{iy\}\oplus V$,
where we have used the notations
$ix:=(ix, 0), iy:=(0, iy)$ for simplicity. Denote $u:=(u, 0)\in U, v:=(0, v)\in V$,
we can define an endomorphism $J$ of $T\widetilde{N}$ as follows
\begin{equation}\label{J2'}
\begin{split}
	&\hspace{1cm}J:\quad T_{(x, y)}\widetilde{N}\longrightarrow T_{(x, y)}\widetilde{N}\\
	&J(ix):=iy, \hspace{2cm} J(iy):=-ix, \\
	&J(u):=iu, ~\textrm{for}~u\in U\quad J(v):=-iv ~\textrm{for}~v\in V.
\end{split}
\end{equation}
It's clear that $J^2=-Id$, thus $J$ is an almost complex structure on $\widetilde{N}=S^{2k-1}\times S^{2k-1}$.

\begin{lem}\label{lem4.1}
The almost complex structure $J$ of $\widetilde{N}$ in (\ref{J2'}) is integrable.
\end{lem}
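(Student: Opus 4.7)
The plan is to exhibit $(\widetilde N,J)$ as diffeomorphic, via a fibre-wise conjugation on the second factor, to the classical Calabi-Eckmann complex manifold on $S^{2k-1}\times S^{2k-1}$, and thereby inherit integrability from \cite{CE53}.

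Consider the involution $\Phi:\widetilde N\to\widetilde N$ given by $\Phi(x,y)=(x,\bar y)$, where $\bar y$ denotes componentwise complex conjugation in $\mathbb{C}^k$. At the image point $(x,\bar y)$, set $V':=d\Phi(V)=\{\bar v:v\in V\}$. Define an almost complex structure $J_{\mathrm{CE}}$ on $\widetilde N$ by
$$J_{\mathrm{CE}}(ix)=-i\bar y,\quad J_{\mathrm{CE}}(i\bar y)=ix,\quad J_{\mathrm{CE}}|_U=i,\quad J_{\mathrm{CE}}|_{V'}=i.$$
Using $d\Phi(iy)=-i\bar y$ and $d\Phi(v)=\bar v$ for $v\in V$, one checks the identity $d\Phi\circ J=J_{\mathrm{CE}}\circ d\Phi$ directly on each of the four subspaces $\mathbb{R}\{ix\}$, $\mathbb{R}\{iy\}$, $U$, $V$. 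Since on the Hopf $2$-plane the matrix of $J_{\mathrm{CE}}$ in the basis $(ix,i\bar y)$ is $\bigl(\begin{smallmatrix}0&1\\-1&0\end{smallmatrix}\bigr)$, and on the horizontal distribution it agrees with the ambient complex structure of $\mathbb{C}^k\oplus\mathbb{C}^k$, the endomorphism $J_{\mathrm{CE}}$ is exactly one of the classical Calabi-Eckmann complex structures on $S^{2k-1}\times S^{2k-1}$; integrability follows from \cite{CE53}. Hence $J$, being $\Phi$-related to $J_{\mathrm{CE}}$, has vanishing Nijenhuis tensor as well.

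The main obstacle is the sign book-keeping in the intertwining check: conjugation on the second factor flips the sign of the Hopf direction ($d\Phi(iy)=-i\bar y$) while turning $J|_V=-i$ into $J_{\mathrm{CE}}|_{V'}=+i$, and these two sign flips must conspire to realize the standard Calabi-Eckmann structure rather than some variant. Alternatively, one could verify integrability directly by Chern's coframe method as in Theorem~\ref{m=1}: pick adapted orthonormal frames $\{x,ix,e_3,ie_3,\ldots\}$ and $\{y,iy,f_3,if_3,\ldots\}$ on the two spheres, form the $(1,0)$-coframe dual to the unitary frame compatible with $J$, and verify $d\theta^a\equiv 0\pmod{\theta^{\bullet}}$. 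Because the two sphere factors are independent, only the structure equations on each $S^{2k-1}$ enter, and the computation reduces essentially to two independent copies of the calculation appearing in the proof of Theorem~\ref{m=1}.
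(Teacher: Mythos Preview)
Your argument is correct and takes a genuinely different route from the paper. The paper proves the lemma by a direct case-by-case verification that the Nijenhuis tensor vanishes: using the Euclidean connection $D$ on each factor $\mathbb{C}^k$, it checks the five cases $N(ix,u)$, $N(u_1,u_2)$, $N(ix,v)$, $N(v_1,v_2)$ and $N(u,v)$ one at a time (the remaining cases following from the symmetries~(\ref{N'})), each time tracking the $\mathrm{Span}\{ix\}$, $\mathrm{Span}\{iy\}$, $U$, $V$ components of the relevant brackets.

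Your approach is instead structural: complex conjugation on the second sphere converts $J|_V=-i$ into $+i$, so that the pushed-forward $J_{\mathrm{CE}}$ acts as the ambient $i$ on the entire horizontal distribution and as a rotation on the Hopf $2$-plane---exactly the shape of a Calabi--Eckmann structure. Your intertwining check $d\Phi\circ J=J_{\mathrm{CE}}\circ d\Phi$ is correct on each of the four summands, and the resulting $J_{\mathrm{CE}}$ is the Calabi--Eckmann structure associated with the holomorphic $\mathbb{C}$-action $(z,w)\mapsto(e^{t}z,\,e^{-it}w)$ on $(\mathbb{C}^k\setminus 0)^2$ (modulus $\tau=-i$, i.e.\ the conjugate of the usual $\tau=i$ member of the family). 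Integrability then follows from \cite{CE53} as you claim, and this explains \emph{why} the paper's $J$ is integrable: up to a real-linear coordinate change it \emph{is} a Calabi--Eckmann structure, which is consistent with the paper's own remark after the lemma calling $\widetilde{N}$ a ``modified Calabi--Eckmann manifold''.

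The trade-off is clear: the paper's computation is self-contained and never invokes \cite{CE53}, while yours is shorter and more conceptual but relies on identifying $J_{\mathrm{CE}}$ with a specific member of the Calabi--Eckmann family. That identification is asserted rather than carried out in your write-up; for a complete proof you should either name the parameter $\tau$ as above or note that any almost complex structure which is $i$ on the horizontal distribution and rotates the Reeb $2$-plane arises from the Calabi--Eckmann quotient construction for some $\tau\in\mathbb{C}\setminus\mathbb{R}$. Your alternative suggestion via Chern's coframe criterion would also work and is closer in spirit to the paper's treatment of Theorem~\ref{m=1}.
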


\begin{proof}
By the Property (\ref{N}) of the Nijenhuis tensor, we need only to calculate $N(ix, u)$, $N(u_1, u_2)$, $N(ix, v)$, $N(v_1, v_2)$ and $N(u, v)$ for any $u, u_1, u_2\in U$ and $v, v_1, v_2\in V$.

\noindent
(1) For any $u\in U$,
\begin{eqnarray*}
	N(ix, u)&=&[iy, iu]-[ix, u]-J[iy, u]-J[ix, iu]\\
	&=&-[ix, u]-J[ix, iu]\\
	&=&-[ix, u]-i[ix, iu]\\
	&=&-D_{ix}u+D_{u}ix-iD_{ix}iu+iD_{iu}ix\\
	&=&0
\end{eqnarray*}
where the third equlity follows from the facts that $[ix, iu]\in T_xS_1^{2k-1}$ and
$$\langle ix, D_{ix}iu-D_{iu}ix\rangle =-\langle D_{ix}ix, iu\rangle -\frac{1}{2}iu\langle ix, ix\rangle=\langle x, iu\rangle=0,$$
since the Euclidean connection $D$ is used, which is K\"ahler.

\noindent
(2) For any $u_1, u_2\in U$,
\begin{eqnarray*}
	N(u_1, u_2)&=&[iu_1, iu_2]-[u_1, u_2]-J([iu_1, u_2]+[u_1, iu_2])\\
	&=&D_{iu_1}iu_2-D_{iu_2}iu_1-D_{u_1}u_2+D_{u_2}u_1\\
	&&-i(D_{iu_1}u_2-D_{u_2}iu_1+D_{u_1}iu_2-D_{iu_2}u_1)\\
	&=&0,
\end{eqnarray*}
where the second equality follows from the facts that $[iu_1, u_2]+[u_1, iu_2]\in T_xS_1^{2k-1}$ and
\begin{eqnarray*}
	&&\langle  ix, ~~D_{iu_1}u_2-D_{u_2}iu_1+D_{u_1}iu_2-D_{iu_2}u_1\rangle\\
	&=&\langle -D_{iu_1}ix, u_2\rangle +\langle D_{u_2}ix, iu_1\rangle-\langle D_{u_1}ix, iu_2\rangle +\langle D_{iu_2}ix, u_1\rangle\\
	&=&\langle u_1, u_2\rangle +\langle iu_2, iu_1\rangle-\langle iu_1, iu_2\rangle -\langle u_2, u_1\rangle\\
	&=& 0.
\end{eqnarray*}

\noindent
(3) For any $v\in V$,
\begin{eqnarray*}
	N(ix, v)&=&[iy, -iv]-[ix, v]-J[iy, v]-J[ix, -iv]\\
	&=&-D_{iy}iv+D_{iv}iy-J(D_{iy}v-D_viy)\\
	&=&-D_{iy}iv-v+i(D_{iy}v-iv)\\
	&=&0,
\end{eqnarray*}
where the third equality follows from the facts that $D_{iy}v-D_viy\in T_yS_2^{2k-1}$ and
$$\langle  iy, ~~D_{iy}v-D_viy\rangle=\langle -D_{iy}iy, v\rangle -\frac{1}{2}v\langle iy, iy\rangle=0.$$

\noindent
(4) For any $v_1, v_2\in V$,
\begin{eqnarray*}
	N(v_1, v_2)&=&[-iv_1, -iv_2]-[v_1, v_2]-J[-iv_1, v_2]-J[v_1, -iv_2]\\
	&=&D_{iv_1}iv_2-D_{iv_2}iv_1-D_{v_1}v_2+D_{v_2}v_1+J(D_{iv_1}v_2-D_{v_2}iv_1+D_{v_1}iv_2-D_{iv_2}v_1)\\
	&=&D_{iv_1}iv_2-D_{iv_2}iv_1-D_{v_1}v_2+D_{v_2}v_1-i(D_{iv_1}v_2-D_{v_2}iv_1+D_{v_1}iv_2-D_{iv_2}v_1)\\
	&=&0,
\end{eqnarray*}
where the third equality follows from the facts that $[-iv_1, v_2]+[v_1, -iv_2]\in T_yS_2^{2k-1}$ and
\begin{eqnarray*}
	&&\langle  iy, ~~D_{iv_1}v_2-D_{v_2}iv_1+D_{v_1}iv_2-D_{iv_2}v_1\rangle\\
	&=&\langle -D_{iv_1}iy, v_2\rangle +\langle D_{v_2}iy, iv_1\rangle-\langle D_{v_1}iy, iv_2\rangle +\langle D_{iv_2}iy, v_1\rangle\\
	&=&\langle v_1, v_2\rangle +\langle iv_2, iv_1\rangle-\langle iv_1, iv_2\rangle -\langle v_2, v_1\rangle\\
	&=& 0.
\end{eqnarray*}

\noindent
(5) For any $u\in U$, $v\in V$, it follws directly from the property of Lie bracket that
$$	N(u, v)=[iu, -iv]-[u, v]-J[iu, v]-J[u, -iv]=0.$$
\end{proof}

Moreover, at a point $(x, y)\in N$, we have $y\in U$ since $\langle y, x\rangle=\langle y, ix\rangle=0$, similarly, $x\in V$. Hence there exist two normal vector fields, say $\xi_1$, $\xi_2$,
$$\xi_1(x, y):=(y, x),\quad \xi_2(x, y):=(iy, -ix).$$
%We spilt $T_xS_1^{2k-1}$ as $T_xS_1^{2k-1}=\textrm\{ix\}\oplus U$. Since $\langle y, x\rangle=\langle y, ix\rangle=0$, we have $y\in U$.
%Similarly, we spilt $T_yS_2^{2k-1}$ as $T_yS_1^{2k-1}=\textrm\{iy\}\oplus V$. We have $x\in V$.
By Lemma \ref{lem4.1}, we see that $J\xi_1=\xi_2$, $J\xi_2=-\xi_1$. Therefore, $JX\in T_{(x, y)}N$ for any $X\in T_{(x, y)}N$. Consequently, $N$ is a complex hypersurface of the Calabi-Eckmann manifold $S^{2k-1}\times S^{2k-1}$.

\subsection{\textbf{Complex structure on $M_+$ of OT-FKM type with $m=4$ in the definite case}}\label{M+4}

As mentioned in the introduction, the definite case can be characterized by $P_0\cdots P_4=\pm I_{2l}$. The normal space of $M_+$ in $\mathbb{R}^{2l}$ at $x\in M_+$ is spanned by $x, P_0x,\cdots, P_4x$, i.e.,
$$(T_xM_+)^{\perp}=\mathrm{Span}\{x, P_0x, \cdots, P_4x\}.$$
Using the properties that $P_{i}P_{j}+P_{j}P_{i}=2\delta_{ij}I_{2l}$ and $P_0\cdots P_4=\pm I_{2l}$, we can verify directly that $P_0P_1x, P_2P_3x, P_2P_4x$ and $P_3P_4x$ are perpendicular to each other and to $(T_xM_+)^{\perp}$. Therefore, we can split $T_xM_+$ into
$$T_xM_+=\textrm{Span}\{P_0P_1x, P_2P_3x, P_2P_4x, P_3P_4x\}\oplus V.$$
For $X\in V$, it is easily seen that $P_0P_1X$ is perpendicular to $x, P_{\alpha}x ~ (\alpha=0,\cdots, 4)$ and $P_0P_1x, P_2P_3x, P_2P_4x, P_3P_4x$, implying that $P_0P_1X\in V$. This enables us to construct a global
endomorphism $J$ of $TM_+$ as follows:
\begin{equation}\label{J3}
\begin{split}
	&\hspace{1cm}J:\quad T_xM_+\longrightarrow T_xM_+\\
	& JP_0P_1x:=P_3P_4x, \quad JP_3P_4x:=-P_0P_1x,\\
	&  JP_2P_3x:=P_2P_4x, \quad JP_2P_4x:=-P_2P_3x,\\
	&\qquad JX:=P_0P_1X ~\textrm{for}~X\in V.
\end{split}
\end{equation}
It is obvious that $J$ is an almost complex structure on $M_+$ which is compatible with the induced metric $ds^2$, thus almost Hermitian.

Next, we show the integrability of $J$.

\noindent
(1) By definition,
\begin{eqnarray*}
N(P_0P_1x, P_2P_3x)&=&[P_3P_4x, P_2P_4x]-[P_0P_1x, P_2P_3x]-J[P_3P_4x, P_2P_3x]-J[P_0P_1x, P_2P_4x]\\
&=& D_{P_3P_4x}P_2P_4x-D_{P_2P_4x}P_3P_4x-D_{P_0P_1x}P_2P_3x+D_{P_2P_3x}P_0P_1x\\
&&-J(D_{P_3P_4x}P_2P_3x-D_{P_2P_3x}P_3P_4x+D_{P_0P_1x}P_2P_4x-D_{P_2P_4x}P_0P_1x)\\
&=&-2P_2P_3x -2JP_2P_4x\\
&=& 0.	
\end{eqnarray*}

\noindent
(2) For any $X\in V$,
\begin{eqnarray*}
N(P_0P_1x, X)&=&[P_3P_4x, P_0P_1X]-[P_0P_1x, X]-J[P_3P_4x, X]-J[P_0P_1x, P_0P_1X]\\
%	&=&  D_{P_3P_4x}P_0P_1X-D_{P_0P_1X}P_3P_4x-D_{P_0P_1x}X+D_XP_0P_1x \\
%	&& -J(D_{P_3P_4x} X-D_XP_3P_4x+D_{P_0P_1x}P_0P_1X-D_{P_0P_1X}P_0P_1x)   \\
&=& P_0P_1D_{P_3P_4x}X-P_3P_4P_0P_1X-D_{P_0P_1x}X+P_0P_1X \\
&& -J(D_{P_3P_4x} X-P_3P_4X+D_{P_0P_1x}P_0P_1X+X)   \\
&=& P_0P_1D_{P_3P_4x}X-P_3P_4P_0P_1X-D_{P_0P_1x}X \\
&&-J(D_{P_3P_4x} X-P_3P_4X+D_{P_0P_1x}P_0P_1X )\\
\end{eqnarray*}

It is easily seen that
$D_{P_0P_1x}P_0P_1X$ is perpendicular to $x, P_{\alpha}x ~ (\alpha=0,\cdots, 4)$ and $P_0P_1x$, $P_2P_3x$, $P_2P_4x, P_3P_4x$, implying that $D_{P_0P_1x}P_0P_1X\in V$, and thus
$JD_{P_0P_1x}P_0P_1X=P_0P_1D_{P_0P_1x}P_0P_1X=-D_{P_0P_1x}X$.

As for $D_{P_3P_4x} X-P_3P_4X$, it is easily checked that it is perpendicular to $x, P_{\alpha}x ~ (\alpha=2, 3, 4)$ and $P_0P_1x$, $P_2P_3x$, $P_2P_4x, P_3P_4x$. Moreover, $$\langle D_{P_3P_4x} X-P_3P_4X, P_0x\rangle=-\langle X, P_0P_3P_4x\rangle-\langle X, P_4P_3P_0x\rangle =0,$$ similarly, $\langle D_{P_3P_4x} X-P_3P_4X, P_1x\rangle=0$. Therefore, $D_{P_3P_4x} X-P_3P_4X\in V$ and $$J(D_{P_3P_4x} X-P_3P_4X)=P_0P_1(D_{P_3P_4x} X-P_3P_4X)=P_0P_1D_{P_3P_4x} X-P_0P_1P_3P_4X,$$ which implies that $N(P_0P_1x, X)=0$.
\vspace{2mm}

\noindent
(3) For any $X\in V$,
\begin{eqnarray*}
N(P_2P_3x, X)&=&[P_2P_4x, P_0P_1X]-[P_2P_3x, X]-J[P_2P_4x, X]-J[P_2P_3x, P_0P_1X]\\
%	&=&  D_{P_3P_4x}P_0P_1X-D_{P_0P_1X}P_3P_4x-D_{P_0P_1x}X+D_XP_0P_1x \\
%	&& -J(D_{P_3P_4x} X-D_XP_3P_4x+D_{P_0P_1x}P_0P_1X-D_{P_0P_1X}P_0P_1x)   \\
&=& P_0P_1D_{P_2P_4x}X-P_2P_4P_0P_1X-D_{P_2P_3x}X+P_2P_3X \\
&& -J(D_{P_2P_4x} X-P_2P_4X+P_0P_1D_{P_2P_3x}X-P_2P_3P_0P_1X)   \\
%	&=& P_0P_1D_{P_3P_4x}X-P_3P_4P_0P_1X-D_{P_0P_1x}X \\
%	&&-J(D_{P_3P_4x} X-P_3P_4X+D_{P_0P_1x}P_0P_1X )\\
&=&0,
\end{eqnarray*}
where the last equality follows from the fact that $D_{P_2P_4x} X-P_2P_4X+P_0P_1D_{P_2P_3x}X$ $-P_2P_3P_0P_1X\in V$.
\vspace{2mm}

\noindent
(4) For any $X, Y\in V$,
\begin{eqnarray*}
N(X, Y)&=&[P_0P_1X, P_0P_1Y]-[X, Y]-J[P_0P_1X, Y]-J[X, P_0P_1Y]\\
%	&=&  D_{P_3P_4x}P_0P_1X-D_{P_0P_1X}P_3P_4x-D_{P_0P_1x}X+D_XP_0P_1x \\
%	&& -J(D_{P_3P_4x} X-D_XP_3P_4x+D_{P_0P_1x}P_0P_1X-D_{P_0P_1X}P_0P_1x)   \\
&=& P_0P_1D_{P_0P_1X}Y-P_0P_1D_{P_0P_1Y}X-D_{X}Y+D_{Y}X \\
&& -J(D_{P_0P_1X} Y-P_0P_1D_{Y}X+P_0P_1D_{X}Y-D_{P_0P_1Y} X)   \\
%	&=& P_0P_1D_{P_3P_4x}X-P_3P_4P_0P_1X-D_{P_0P_1x}X \\
%	&&-J(D_{P_3P_4x} X-P_3P_4X+D_{P_0P_1x}P_0P_1X )\\
&=&0,
\end{eqnarray*}
where the last equality follows from the fact that $D_{P_0P_1X} Y-P_0P_1D_{Y}X+P_0P_1D_{X}Y-D_{P_0P_1Y} X\in V$ which can be checked easily.

\section{\textbf{Homogeneous Almost Complex and Complex Structure}}\label{Sec5}

\subsection{Homogeneous almost complex structure.} %and complex structure}
Let $G$ be a compact Lie group and $H$ a closed subgroup. Then the quotient space $G/H$ is a smooth manifold with the natural transitive action by $G$. More precisely, given any $g\in G$, the action of $g$ on $G/H$ is defined by $\tau(g): G/H\rightarrow G/H, pH\mapsto gpH$, for any $p\in G$.

Choose a bi-invariant metric $Q$ on $G$. Let $\mathfrak{g}$ and $\mathfrak{h}$ be the Lie algebras of $G$ and $H$, respectively. Define $\mathfrak{m}:=\mathfrak{h}^{\bot}$. The isotropy representation of $H$ on $\mathfrak{m}$ is defined by $Ad:H\times \mathfrak{m}\rightarrow \mathfrak{m}, (h, v)\mapsto Ad(h)(v)=dL_h\circ dR_{h^{-1}}(v)$.

\begin{defn}
An almost complex structure $J$ on $G/H$ is called \textbf{$G$-invariant} if for any $g\in G$ and $v\in T_H(G/H)$, $J(d\tau_g(v))=d\tau_g(J(v))$.
\end{defn}

The basic principle for studying invariant structures of $G/H$ is that $G$-invariant geometric objects on $G/H$ correspond one-to-one with $Ad_{H}$-invariant algebraic objects in $\mathfrak{m}$.
In particular, a $G$-invariant almost complex structure $J$ corresponds one-to-one with an $Ad_H$-invariant linear transformation $I: \mathfrak{m}\rightarrow \mathfrak{m}$ with $I^2=-\text{id}$.

According to Koszul \cite{Kos55}, the $G$-invariant almost complex structure $J$ on $G/H$ is integrable if and only if the corresponding $I: \mathfrak{m}\rightarrow \mathfrak{m}$ satisfies:
for any $x, y\in \mathfrak{m}$, $$[Ix, Iy]-I[Ix, y]_{\mathfrak{m}}-I[x, Iy]_{\mathfrak{m}}-[x, y]\equiv 0~\mathrm{mod}~\mathfrak{h}.$$

\subsection{$M_+$ of OT-FKM with $m=8$.}
In this case, the isoparametric hypersurface of OT-FKM type is not extrinsically homogeneous(cf. \cite{FKM81}).
\vspace{2mm}

\noindent
\textbf{Proposition \ref{m=8 focal}}.
\emph{The focal submanifold $M_+$ with $(g, m_1, m_2)=(4, 8, 7)$ of OT-FKM type in the definite case admits a transitive action by $Spin(9)$ and is diffeomorphic to $Spin(9)/G_2$. Moreover, $M_+$ does not admit any $Spin(9)$-invariant almost complex structure.}

\begin{proof}
Let $Q=-B$ be a bi-invariant metric on $Spin(9)$, where $B$ is the Killing form of $Spin(9)$, and $\mathfrak{spin}(9)$ and $\mathfrak{g}_2$ be the Lie algebras of $Spin(9)$ and $G_2$, respectively. Then the isotropy representation of $G_2$ on $\mathfrak{m}:=\mathfrak{g}_2^{\bot}$ can be decomposed as $\mathfrak{m}=\mathfrak{m}_1\bigoplus \mathfrak{m}_2\bigoplus \mathfrak{m}_3\bigoplus \underline{\mathbf{1}},$ where $\underline{\mathbf{1}}$ is a $1$-dimensional trivial representation. Moreover, $\mathrm{dim}~\mathfrak{m}_1=\mathrm{dim}~\mathfrak{m}_2=\mathrm{dim}~\mathfrak{m}_3=7$, and for $1\leq i\leq 3$, each isotropy representation of $G_2$ on $\mathfrak{m}_i$ is irreducible. It follows from Schur's Lemma that $M_+$ does not admit any $Spin(9)$-invariant almost complex structure.

Now, the proof is finished.
\end{proof}

\subsection{Homogeneous complex structures on isoparametric hypersurfaces}

\subsubsection{Classification of homogeneous hypersurfaces in unit spheres}
In this section, we will focus on homogeneous isoparametric hypersurfaces with $g=4$ distinct principal curvatures. According to Hsiang-Lawson \cite{HL71} and Takagi-Takahashi \cite{TT72}, homogeneous isoparametric hypersurfaces in unit spheres must be principal orbits of isotropy representations of symmetric spaces of rank $2$ (see also \cite{TXY12}). For $g=4$, each homogeneous hypersurface $M$ is a principal orbit of the isotropy representation of \\
$$SO(5), ~\,~SO(10)/U(5),~\,~ E_6/(U(1)\cdot Spin(10)),$$
$$G^+_2(\mathbb{R}^{k+2})=SO(k+2)/(SO(2)\times SO(k)) (k\geq 3),$$
$$G_2(\mathbb{C}^{k+2})=SU(k+2)/S(U(2)\times U(k)) (k\geq 3),$$
$$G_2(\mathbb{H}^{k+2})=Sp(k+2)/(Sp(2)\times Sp(k)) (k\geq 2).$$

\subsubsection{Restricted root decompositions and geometry of homogeneous hypersurfaces}
To study the invariant geometric structures of orbits of isotropy representations, we will introduce the restricted root decomposition of symmetric spaces. Let $G/K$ be an irreducible Riemannian symmetric space of compact type, where $G$ is a connected compact semisimple Lie group and $K$ is its closed subgroup. Let $\mathfrak{g}$ and $\mathfrak{k}$ denote Lie algebras of $G$ and $K$, respectively. Then $G/K$ gives rise to an involutive automorphism $\theta$ of $\mathfrak{g}$ such that
$\mathfrak{k}=\{X\in \mathfrak{g}~|~\theta(X)=X\}$.
Denote by $\mathfrak{p}$ the eigenspace of $\theta$ corresponding to the eigenvalue $-1$, i.e., $\mathfrak{p}=\{X\in \mathfrak{g}~|~\theta(X)=-X\}$.
Clearly, we have
$$[\mathfrak{k}, \mathfrak{p}]\subset \mathfrak{p},\quad [\mathfrak{p}, \mathfrak{p}]\subset \mathfrak{k}, ~ \mathrm{and}~ \mathfrak{g}=\mathfrak{k}\oplus\mathfrak{p}.$$

Choose a maximal abelian subspace $\mathfrak{a}$ of $\mathfrak{p}$. The dimension of $\mathfrak{a}$ is called the rank of $G/K$. For a linear form $\alpha$ on $\mathfrak{a}$, define
\begin{eqnarray*}
\mathfrak{k}_{\alpha}&:=&\{X\in \mathfrak{k}~|~(ad H)^2(X)=-\alpha(H)^2X, ~~~\forall H\in \mathfrak{a}\},\nonumber\\
\mathfrak{p}_{\alpha}&:=&\{X\in \mathfrak{p}~|~(ad H)^2(X)=-\alpha(H)^2X, ~~~\forall H\in \mathfrak{a}\}.
\end{eqnarray*}
By definition, $\mathfrak{k}_{\alpha}=\mathfrak{k}_{-\alpha}, \mathfrak{p}_{\alpha}=\mathfrak{p}_{-\alpha}, \mathfrak{p}_{0}=\mathfrak{a}$ and $\mathfrak{k}_0$ is the centralizer of $\mathfrak{a}$ in $\mathfrak{k}$.
For each linear form $\alpha$, the dimension of $\mathfrak{p}_{\alpha}$ is called the multiplicity of $\alpha$, denoted by $m(\alpha)$. A restricted root of $\mathfrak{g}$ with respect to $\mathfrak{a}$ is a linear form $\alpha$ on $\mathfrak{a}$ such that $m(\alpha)\neq 0$. Fix a suitable ordering in $\mathfrak{a}^*$, the dual space of $\mathfrak{a}$, and denote by $\triangle$ the set of positive roots. Since $G$ is compact, we can define an inner product $\langle , \rangle $ on $\mathfrak{g}$ which is $Ad_G$-invariant. Then we have the following orthogonal decomposition:
\begin{eqnarray*}
\mathfrak{k}&=&\mathfrak{k}_0\bigoplus_{\alpha \in \triangle}\mathfrak{k}_{\alpha},\nonumber\\
\mathfrak{p}&=&\mathfrak{a}\bigoplus_{\alpha \in \triangle}\mathfrak{p}_{\alpha}.
\end{eqnarray*}
Moreover, for $\alpha\in\triangle$,
$[\mathfrak{a}, \mathfrak{k}_{\alpha}]= \mathfrak{p}_{\alpha},$ and $[\mathfrak{a}, \mathfrak{p}_{\alpha}]= \mathfrak{k}_{\alpha}.$
According to \cite{Hel65}, if $\alpha, \beta\in \triangle\cup\{0\},$ then
$$[\mathfrak{k}_{\alpha}, \mathfrak{k}_{\beta}]\subset \mathfrak{k}_{\alpha+\beta}+\mathfrak{k}_{\alpha-\beta},$$
$$[\mathfrak{k}_{\alpha}, \mathfrak{p}_{\beta}]\subset \mathfrak{p}_{\alpha+\beta}+\mathfrak{p}_{\alpha-\beta},$$
$$[\mathfrak{p}_{\alpha}, \mathfrak{p}_{\beta}]\subset \mathfrak{k}_{\alpha+\beta}+\mathfrak{k}_{\alpha-\beta}.$$
If $\alpha+\beta\in \triangle\cup\{0\}$ or $\alpha-\beta\in \triangle\cup\{0\}$, then $[\mathfrak{k}_{\alpha}, \mathfrak{p}_{\beta}]\neq 0.$
Let $S$ be the unit sphere in $\mathfrak{p}$. The isotropy representation of $K$ acts isometrically on $\mathfrak{p}$ by the adjoint representation, and the unit sphere $S\subset \mathfrak{p}$ is $K$-invariant.   For each $X$ in $\mathfrak{p}$, there is an element $k\in K$ such that $Ad(k)X\in \mathfrak{a}$. It follows that each $Ad(K)$-orbit in $\mathfrak{p}$ meets $\mathfrak{a}$.
For $H\in \mathfrak{a}\cap S$, define $H^{\bot}:=\{X\in \mathfrak{a}~|~\langle X, H \rangle=0\}$
and $\triangle_H:=\{\alpha\in \triangle~|~\alpha(H)\neq 0\}.$
Let $N$ be an orbit in $S$ under $K$ through $H$, i.e., $N=Ad(K)H$. Then the tangent space of $N$ at $H$ is given by $$T_{H}N=[H, \mathfrak{k}]=\bigoplus_{\alpha \in \triangle_H}\mathfrak{p}_{\alpha}.$$
It follows that the normal space of $N$ in $S$ at $H$ is determined by
$$T^{\bot}_{H}N=H^{\bot}\bigoplus_{\alpha \in \triangle-\triangle_H}\mathfrak{p}_{\alpha}.$$
Define $\triangle^*:=\{\alpha\in \triangle~|~\frac{1}{2}\alpha \not\in \triangle\}$. One has
\begin{prop}(\cite{TT72})
Assume dim $\mathfrak{a}=2$ and $N$ is an orbit of highest dimension(i.e., $\triangle=\triangle_H$). With the unit normal vector $B$ of $N$ in $S$ at $H$, $\{H, B\}$ is an orthogonal basis of $\mathfrak{a}$. The principal curvatures of $N$ with respect to $B$ are given by $-\frac{\alpha(B)}{\alpha(H)}$, where $\alpha \in \triangle^*$. Moreover, the principal distribution of $-\frac{\alpha(B)}{\alpha(H)}$ is $\mathfrak{p}_{\alpha}\oplus \mathfrak{p}_{2\alpha}$ and its multiplicity is $m(\alpha)+m(2\alpha)$.
\end{prop}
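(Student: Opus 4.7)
The plan is to reduce the geometry at $H$ to Lie-algebraic identities in the restricted root spaces, using the homogeneity $N=Ad(K)H$. Since the hypothesis $\triangle=\triangle_H$ means $N$ has maximal dimension, one has $T_HN=\bigoplus_{\alpha\in\triangle}\mathfrak{p}_\alpha$, so the orthogonal complement of $T_HN$ inside $\mathfrak{p}$ is precisely $\mathfrak{a}$. Removing the radial direction $H$, the normal space of $N$ in $S$ at $H$ equals the one-dimensional space $\mathfrak{a}\cap H^\perp$; since $\dim\mathfrak{a}=2$, the unit normal $B$ must lie in $\mathfrak{a}$ and be orthogonal to $H$, so $\{H,B\}$ is an orthogonal basis of $\mathfrak{a}$.

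Next I extend $B$ to a smooth $K$-invariant normal field along $N$. Because $\triangle=\triangle_H$ makes $H$ regular, the stabilizer $K_H$ has Lie algebra $\mathfrak{k}_0$, which centralizes $\mathfrak{a}$ by definition; in particular $Ad(K_H)B=B$, and the assignment $\tilde B(Ad(k)H):=Ad(k)B$ is well-defined on $N$. For $Y\in\mathfrak{k}_\alpha$ the fundamental vector field of the $Ad$-action hits $X=[Y,H]\in\mathfrak{p}_\alpha$ at $H$, and differentiating $\tilde B$ along the curve $Ad(\exp tY)H$ in the flat ambient connection on $\mathfrak{p}$ yields $\bar\nabla_X\tilde B=[Y,B]$. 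Since $B\in\mathfrak{a}\perp\mathfrak{p}_\alpha\ni X$, the second fundamental form of $S$ in $\mathfrak{p}$ contributes nothing, so the derivative of $\tilde B$ inside $S$ agrees with $[Y,B]$, which is automatically tangent to $N$, and the shape operator is $A_BX=-[Y,B]$.

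To identify $[Y,B]$ explicitly I will exploit the defining relation $(\mathrm{ad}_C)^2|_{\mathfrak{k}_\alpha}=-\alpha(C)^2\,\mathrm{Id}$ for every $C\in\mathfrak{a}$. Polarizing at $C=H+sB$ gives $\mathrm{ad}_H\,\mathrm{ad}_B+\mathrm{ad}_B\,\mathrm{ad}_H=-2\alpha(H)\alpha(B)\,\mathrm{Id}$ on $\mathfrak{k}_\alpha$, and combining this with $[\mathrm{ad}_H,\mathrm{ad}_B]=\mathrm{ad}_{[H,B]}=0$ forces $\mathrm{ad}_B\,\mathrm{ad}_H=-\alpha(H)\alpha(B)\,\mathrm{Id}$. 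Applied to $Y$ this reads $[B,[H,Y]]=-\alpha(H)\alpha(B)Y$, while the original relation with $C=H$ gives $[H,[H,Y]]=-\alpha(H)^2Y$. Writing the unknown $[B,Y]\in\mathfrak{p}_\alpha$ as $[H,Y']$ for the unique $Y'\in\mathfrak{k}_\alpha$ (possible since $\mathrm{ad}_H:\mathfrak{k}_\alpha\to\mathfrak{p}_\alpha$ is an isomorphism) and applying $\mathrm{ad}_H$ to both sides, one finds $Y'=\tfrac{\alpha(B)}{\alpha(H)}Y$, hence $[B,Y]=\tfrac{\alpha(B)}{\alpha(H)}[H,Y]$. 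Substituting back yields $\bar\nabla_X\tilde B=\tfrac{\alpha(B)}{\alpha(H)}X$, so $A_B$ acts on $\mathfrak{p}_\alpha$ as the scalar $-\tfrac{\alpha(B)}{\alpha(H)}$.

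Finally I collect the principal distributions. The identity $(2\alpha)(B)/(2\alpha)(H)=\alpha(B)/\alpha(H)$ puts $\mathfrak{p}_\alpha$ and $\mathfrak{p}_{2\alpha}$ in the same eigenspace of $A_B$, producing the principal distribution $\mathfrak{p}_\alpha\oplus\mathfrak{p}_{2\alpha}$ of dimension $m(\alpha)+m(2\alpha)$ for each $\alpha\in\triangle^*$. For non-proportional $\alpha,\beta\in\triangle^*$, the quantity $\alpha(H)\beta(B)-\alpha(B)\beta(H)=(\alpha\wedge\beta)(H,B)$ is nonzero because $\{H,B\}$ is a basis of $\mathfrak{a}$, so distinct rays yield distinct principal curvatures and the eigenspace decomposition exhausts $T_HN$. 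I expect the main technical obstacle to be the bracket identity $[B,Y]=\tfrac{\alpha(B)}{\alpha(H)}[H,Y]$: it is where the two $\mathfrak{a}$-directions genuinely interact on each $\mathfrak{k}_\alpha$, and it requires the polarization-plus-commutativity argument rather than a direct appeal to the defining relation on a single $C\in\mathfrak{a}$.
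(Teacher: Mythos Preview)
Your argument is correct. The polarization trick for the identity $[B,Y]=\tfrac{\alpha(B)}{\alpha(H)}[H,Y]$ on $\mathfrak{k}_\alpha$ is clean and is exactly the right way to see why the shape operator acts by the scalar $-\alpha(B)/\alpha(H)$ on each $\mathfrak{p}_\alpha$; the remaining steps (identifying the normal space, extending $B$ invariantly, computing $\bar\nabla_X\tilde B=[Y,B]$, and separating the rays in $\triangle^*$ via $\alpha\wedge\beta\neq 0$ on the $2$-dimensional $\mathfrak{a}$) are all handled correctly.

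There is nothing to compare against in the paper itself: this proposition is quoted from \cite{TT72} and is stated without proof. Your write-up therefore supplies what the paper deliberately omits. If you want to align it more closely with the classical source, note that Takagi--Takahashi phrase the computation slightly differently (working with the complexified root spaces $\mathfrak{g}_\alpha$ and the formula $[H_0,X_\alpha]=\alpha(H_0)X_\alpha$ rather than polarizing the squared relation), but the content is the same and your real, polarization-based argument is arguably more transparent in the present setting.
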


\subsubsection{Existence and non-existence of homogeneous complex structures}
\hspace{6mm}

\vspace{2mm}
\noindent
\textbf{Theorem \ref{homogeneous g=4}}
\emph{	 For homogeneous isoparametric hypersurfaces with $g=4$,
\begin{enumerate}
\item When $(m_1, m_2)=(4, 4k-5)$ ($k\geq 2$), each homogeneous isoparametric hypersurface $M^{8k-2}$ corresponding to the symmetric pair $(Sp(k+2), Sp(2)\times Sp(k))$
admits no $Sp(2)\times Sp(k)$-invariant almost complex structure.
\item When $(m_1, m_2)\neq(4, 4k-5)$ (any positive integer $k>1$ ), each homogeneous isoparametric hypersurface admits an invariant complex structure.
\end{enumerate}}

\begin{proof}
For the proof of part (i), the symmetric pair is $(G, K)=(Sp(k+2), Sp(2)\times Sp(k))$. By the restricted root space decomposition, we have
\begin{eqnarray*}
\mathfrak{k}&=&\mathfrak{k}_0\oplus \mathfrak{k}_{\alpha_1} \oplus \mathfrak{k}_{2\alpha_1} \oplus \mathfrak{k}_{\alpha_2} \oplus \mathfrak{k}_{2\alpha_2} \oplus\mathfrak{k}_{\alpha_1+\alpha_2} \oplus \mathfrak{k}_{\alpha_1-\alpha_2},\nonumber\\
\mathfrak{p}&=&\mathfrak{a}\oplus \mathfrak{p}_{\alpha_1} \oplus \mathfrak{p}_{2\alpha_1} \oplus \mathfrak{p}_{\alpha_2} \oplus \mathfrak{p}_{2\alpha_2}\oplus \mathfrak{p}_{\alpha_1+\alpha_2} \oplus \mathfrak{p}_{\alpha_1-\alpha_2}.
\end{eqnarray*}
The connected Lie subgroup $K_0$ corresponding to $\mathfrak{k}_0$ is $\Delta(Sp(1)\times Sp(1))\times Sp(k-2)$. Thus $M^{8k-2}\cong K/K_0$. Moreover, $\dim \mathfrak{k}_{\alpha_i}=\dim \mathfrak{p}_{\alpha_i}=4k-8$, $\dim \mathfrak{k}_{2\alpha_i}=\dim \mathfrak{p}_{2\alpha_i}=3$, and $\dim \mathfrak{k}_{\alpha_1\pm\alpha_2}=\dim \mathfrak{p}_{\alpha_1\pm \alpha_2}=4$.

Fix a bi-invariant metric $Q$ on $K$. Let $\mathfrak{m}=\mathfrak{k}_0^{\bot}\subset \mathfrak{k}$. Then for $K/K_0$, the isotropy representation of $K_0$ on $\mathfrak{m}$ has the irreducible decomposition $$\mathfrak{m}=\mathfrak{k}_{\alpha_1} \oplus \mathfrak{k}_{2\alpha_1} \oplus \mathfrak{k}_{\alpha_2} \oplus \mathfrak{k}_{2\alpha_2} \oplus\mathfrak{k}_{\alpha_1+\alpha_2} \oplus \mathfrak{k}_{\alpha_1-\alpha_2}.$$ Moreover, as representations of $K_0$, $\mathfrak{k}_{\alpha_1}\not\cong \mathfrak{k}_{\alpha_2}$, $\mathfrak{k}_{2\alpha_1}\not\cong \mathfrak{k}_{2\alpha_2}$, and $\mathfrak{k}_{\alpha_1+\alpha_2}\cong \mathfrak{k}_{\alpha_1-\alpha_2}$. Therefore, by Schur's Lemma, $M^{8k-2}$
admits no $Sp(2)\times Sp(k)$-invariant almost complex structure.

The proof of part (ii) will be divided into the following five cases.
\vspace{3mm}

\noindent
\textbf{(1) The $\mathbf{(G, K)=(SO(5)\times SO(5), \triangle SO(5))}$ case, \bm{$(g, m_1, m_2)=(4, 2, 2)$}:}

In this case, the isoparametric hypersurface $M^8$ is diffeomorphic to $SO(5)/T^2$, where $T^2$ is the maximal torus in $SO(5)$. Hence $M^8$ admits an $SO(5)$-invariant complex structure(cf. \cite{Wan54} and \cite{BH58}).
\vspace{3mm}

\noindent
\textbf{(2) The $\mathbf{(G, K)=(SO(k+2), SO(2)\times SO(k))}$ case, \bm{$(g, m_1, m_2)=(4, 1, k-2)$}:}

In this case, $(\mathfrak{g}, \mathfrak{k})=(\mathfrak{o}(2+k), \mathfrak{o}(2)+\mathfrak{o}(k))$, and we have the following restricted root decomposition
\begin{eqnarray*}
\mathfrak{k}&=&\mathfrak{k}_0\oplus \mathfrak{k}_{\alpha_1} \oplus \mathfrak{k}_{\alpha_2} \oplus \mathfrak{k}_{\alpha_1+\alpha_2} \oplus \mathfrak{k}_{\alpha_1-\alpha_2},\nonumber\\
\mathfrak{p}&=&\mathfrak{a}\oplus \mathfrak{p}_{\alpha_1} \oplus \mathfrak{p}_{\alpha_2} \oplus \mathfrak{p}_{\alpha_1+\alpha_2} \oplus \mathfrak{p}_{\alpha_1-\alpha_2},
\end{eqnarray*}
where
\begin{eqnarray*}
\mathfrak{k}&=&\left\{\begin{pmatrix}
	T_1 & 0\\
	0 & T_2
\end{pmatrix}~|~T_1\in \mathfrak{o}(2), T_2\in \mathfrak{o}(k)
\right\},\nonumber  \\
\mathfrak{p}&=&\left\{\begin{pmatrix}
	0 & Y\\
	-Y^T & 0
\end{pmatrix}~|~Y\in M(2, k; \mathbb{R})
\right\},\nonumber  \\
\mathfrak{k}_0&=&\left\{\begin{pmatrix}
	0 & 0 & 0\\
	0 & 0 & 0 \\
	0 & 0 & X
\end{pmatrix}~|~X\in \mathfrak{o}(k-2)
\right\},\nonumber \\
\mathfrak{k}_{\alpha_1}&=&\left\{\begin{pmatrix}
	0 & 0 & 0\\
	0 & 0 & X \\
	0 & -X^T & 0
\end{pmatrix}~|~X=
\begin{pmatrix}
	a \\
	0
\end{pmatrix}, a\in \mathbb{R}^{k-2}
\right\},\nonumber \\
\mathfrak{k}_{\alpha_2}&=&\left\{\begin{pmatrix}
	0 & 0 & 0\\
	0 & 0 & X \\
	0 & -X^T & 0
\end{pmatrix}~|~X=
\begin{pmatrix}
	0 \\
	b
\end{pmatrix}, b\in \mathbb{R}^{k-2}
\right\},\nonumber \\
\mathfrak{k}_{\alpha_1+\alpha_2}&=&\left\{\begin{pmatrix}
	\Lambda & 0 & 0\\
	0 & -\Lambda& 0 \\
	0 & 0 & 0
\end{pmatrix}~|~\Lambda=
\begin{pmatrix}
	0 & \lambda \\
	-\lambda & 0
\end{pmatrix}, \lambda\in \mathbb{R}
\right\},\nonumber \\
\mathfrak{k}_{\alpha_1-\alpha_2}&=&\left\{\begin{pmatrix}
	\Lambda & 0 & 0\\
	0 & \Lambda& 0 \\
	0 & 0 & 0
\end{pmatrix}~|~\Lambda=
\begin{pmatrix}
	0 & \lambda \\
	-\lambda & 0
\end{pmatrix}, \lambda\in \mathbb{R}
\right\},\nonumber \\
\mathfrak{a}&=&\left\{H=H(\xi_1, \xi_2)=\begin{pmatrix}
	0 &  \xi & 0\\
	-\xi^T & 0  &0 \\
	0 &0 &0
\end{pmatrix}~|~\xi=
\begin{pmatrix}
	\xi_1 & 0 \\
	0 & \xi_2
\end{pmatrix}, \xi_1, \xi_2\in \mathbb{R}
\right\},\nonumber \\
\mathfrak{p}_{\alpha_1}&=&\left\{\begin{pmatrix}
	0 &  0 &Y\\
	0 & 0  &0 \\
	-Y^T &0 & 0
\end{pmatrix}~|~Y=\begin{pmatrix}
	a \\
	0
\end{pmatrix},
a\in \mathbb{R}^{k-2}
\right\},\nonumber\\
\mathfrak{p}_{\alpha_2}&=&\left\{\begin{pmatrix}
	0 &  0 &Y\\
	0 & 0  &0 \\
	-Y^T &0 & 0
\end{pmatrix}~|~Y=\begin{pmatrix}
	0 \\
	b
\end{pmatrix},
b\in \mathbb{R}^{k-2}
\right\},\nonumber\\
\mathfrak{p}_{\alpha_1+\alpha_2}&=&\left\{\begin{pmatrix}
	0 &  \Lambda & 0\\
	-\Lambda^T & 0  &0 \\
	0 &0 & 0
\end{pmatrix}~|~\Lambda=
\begin{pmatrix}
	0 & \lambda \\
	-\lambda & 0
\end{pmatrix}, \lambda\in \mathbb{R}
\right\},\nonumber\\
\mathfrak{p}_{\alpha_1-\alpha_2}&=&\left\{\begin{pmatrix}
	0 &  \Lambda & 0\\
	-\Lambda^T & 0  &0 \\
	0 &0 & 0
\end{pmatrix}~|~\Lambda=
\begin{pmatrix}
	0 & \lambda \\
	\lambda & 0
\end{pmatrix}, \lambda\in \mathbb{R}
\right\},
\end{eqnarray*}
By definition, for $i=1, 2$, $\alpha_i(H)=\xi_i$. For $0<t<\frac{\pi}{4}$ and $H=H(\cos t, \sin t)$, the principal orbit $Ad(K)H$ is diffeomorphic to $K/K_0$, where
$$K_0=\left\{
\begin{pmatrix}
\pm I_4 & 0 \\
0 & T
\end{pmatrix}~|~
T\in SO(k-2)\right\}=\mathbb{Z}_2\times SO(k-2).$$
For the homogeneous isoparametric hypersurface $M=Ad(K)H\cong K/K_0$, we have the reductive decomposition $\mathfrak{k}=\mathfrak{k}_0\oplus\mathfrak{m}$, where $\mathfrak{m}=\mathfrak{k}_{\alpha_1+\alpha_2}\oplus \mathfrak{k}_{\alpha_1-\alpha_2}\oplus \mathfrak{k}_{\alpha_1}\oplus \mathfrak{k}_{\alpha_2}$. Moreover, the isotropy representations of $K_0$ on $\mathfrak{k}_{\alpha_1\pm\alpha_2}$ are trivial. The isotropy representations on $\mathfrak{k}_{\alpha_1}$ and $\mathfrak{k}_{\alpha_2}$ are irreducible and equivalent. Let $\{E_i\}$ be the standard basis of $\mathbb{R}^{k-2}$. For convenience, define a basis of $\mathfrak{k}_{\alpha_1}$ for $a\in \mathbb{R}^{k-2}$ corresponding to the standard basis $\{E_i\}$ of $\mathbb{R}^{k-2}$, denoted by $\{a_i\}$. Similarly, define a basis of $\mathfrak{k}_{\alpha_2}$ for $b\in \mathbb{R}^{k-2}$ corresponding to the standard basis $\{E_i\}$ of $\mathbb{R}^{k-2}$, denoted by $\{b_i\}$.
Choose a vector in $\mathfrak{k}_{\alpha_1+\alpha_2}$ corresponding to $\Lambda=\begin{pmatrix}
0 & 1     \\
-1 & 0
\end{pmatrix}$, denoted by $\Lambda_-$. Similarly, choose a vector in $\mathfrak{k}_{\alpha_1-\alpha_2}$ corresponding to $\Lambda=\begin{pmatrix}
0 & 1     \\
-1 & 0
\end{pmatrix}$, denoted by $\Lambda_+$.
By Schur's Lemma, for any given $SO(2)\times SO(k)$-invariant almost complex structure $J$, there exist constants $\lambda_1, \lambda_2$ and non-zero constants $\mu_1, \mu_2$, such that
\begin{eqnarray*}
Ja_i=\lambda_1a_i-\frac{1+\lambda_1^2}{\mu_1}b_i, &&Jb_i=\mu_1a_i-\lambda_1b_i, 1\leq i\leq k-2,\\
J\Lambda_-=\lambda_2\Lambda_--\frac{1+\lambda_2^2}{\mu_2}\Lambda_+, &&J\Lambda_+=\mu_2\Lambda_--\lambda_2\Lambda_+.
\end{eqnarray*}

Now, we are in a position to determine the integrability conditions of $J$.

For $u, v\in \mathfrak{k}_{\alpha_1}$, $N(u, v)=0$.

For $u\in \mathfrak{k}_{\alpha_1}, v\in \mathfrak{k}_{\alpha_2}$, $N(u, v)=0$.

For $u\in \mathfrak{k}_{\alpha_1}, v\in \mathfrak{k}_{\alpha_1+\alpha_2}$, $N(u, v)=0$  if and only if $$-\frac{1+\lambda_1^2}{\mu_1}(\lambda_2+\frac{1+\lambda_2^2}{\mu_2})+\lambda_1(\mu_1+\frac{1+\lambda_1^2}{\mu_1})+\mu_1(\lambda_2+\frac{1+\lambda_2^2}{\mu_2})=0$$
and $$-\lambda_1(\lambda_2+\frac{1+\lambda_2^2}{\mu_2})-(\lambda_1^2+\frac{(1+\lambda_1^2)^2}{\mu_1^2})-\lambda_1(\lambda_2+\frac{1+\lambda_2^2}{\mu_2})+1=0.$$

For $u\in \mathfrak{k}_{\alpha_1}, v\in \mathfrak{k}_{\alpha_1-\alpha_2}$, $N(u, v)=0$  if and only if
$$-\frac{1+\lambda_1^2}{\mu_1}(\lambda_2+\mu_2)-\lambda_1(\mu_1+\frac{1+\lambda_1^2}{\mu_1})+\mu_1(\lambda_2+\mu_2)=0$$ and $$-\lambda_1(\lambda_2+\mu_2)+(\lambda_1^2+\frac{(1+\lambda_1^2)^2}{\mu_1^2})-\lambda_1(\lambda_2+\mu_2)-1=0.$$

For $u, v\in \mathfrak{k}_{\alpha_2}$, $N(u, v)=0$.

For $u\in \mathfrak{k}_{\alpha_2}, v\in \mathfrak{k}_{\alpha_1+\alpha_2}$, $N(u, v)=0$ if and only if
$$-\lambda_1(\lambda_2+\frac{1+\lambda_2^2}{\mu_2})+(\lambda_1^2+\mu_1^2)-\lambda_1(\lambda_2+\frac{1+\lambda_2^2}{\mu_2})-1=0$$
and $$-\mu_1(\lambda_2+\frac{1+\lambda_2^2}{\mu_2})-\lambda_1(\mu_1+\frac{1+\lambda_1^2}{\mu_1})+\frac{1+\lambda_1^2}{\mu_1}(\lambda_2+\frac{1+\lambda_2^2}{\mu_2})=0.$$

For $u\in \mathfrak{k}_{\alpha_2}, v\in \mathfrak{k}_{\alpha_1-\alpha_2}$, $N(u, v)=0$ if and only if
$$-\lambda_1(\lambda_2+\mu_2)-(\lambda_1^2+\mu_1^2)-\lambda_1(\lambda_2+\mu_2)+1=0$$
and
$$-\mu_1(\lambda_2+\mu_2)+\lambda_1(\mu_1+\frac{1+\lambda_1^2}{\mu_1})+\frac{1+\lambda_1^2}{\mu_1}(\lambda_2+\mu_2)=0.$$

For $u, v\in \mathfrak{k}_{\alpha_1+\alpha_2}$, $N(u, v)=0$.

For $u \in \mathfrak{k}_{\alpha_1+\alpha_2}, v\in \mathfrak{k}_{\alpha_1-\alpha_2}$, $N(u, v)=0$.

For $u, v\in \mathfrak{k}_{\alpha_1-\alpha_2}$, $N(u, v)=0$.

Particularly, if $\lambda_1=\lambda_2=0$, then $J$ is integrable  if and only if $\mu_1^2=1$. It follows that the homogeneous isoparametric hypersurface in this case admits at least one complex structure.
\vspace{3mm}

\noindent
\textbf{(3) The $\mathbf{(G, K)=(SU(k+2), S(U(2)\times U(k))}$ case, \bm{$(g, m_1, m_2)=(4, 2, 2k-3)$}:}

In this case, equivalently, we use the symmetric pair $(G, K)=(U(k+2), U(2)\times U(k))$ and $(\mathfrak{g}, \mathfrak{k})=(\mathfrak{u}(k+2), \mathfrak{u}(2)+\mathfrak{u}(k)).$ The restricted root decomposition is given by
\begin{eqnarray*}
\mathfrak{k}&=&\mathfrak{k}_0\oplus \mathfrak{k}_{\alpha_1} \oplus \mathfrak{k}_{\alpha_2} \oplus \mathfrak{k}_{2\alpha_1} \oplus \mathfrak{k}_{2\alpha_2}\oplus \mathfrak{k}_{\alpha_1+\alpha_2} \oplus \mathfrak{k}_{\alpha_1-\alpha_2},\nonumber\\
\mathfrak{p}&=&\mathfrak{a}\oplus \mathfrak{p}_{\alpha_1} \oplus \mathfrak{p}_{\alpha_2} \oplus \mathfrak{p}_{2\alpha_1} \oplus \mathfrak{p}_{2\alpha_2}\oplus \mathfrak{p}_{\alpha_1+\alpha_2} \oplus \mathfrak{p}_{\alpha_1-\alpha_2},
\end{eqnarray*}
where
\begin{eqnarray*}
\mathfrak{k}&=&\left\{\begin{pmatrix}
	T_1 & 0\\
	0 & T_2
\end{pmatrix}~|~T_1\in \mathfrak{u}(2), T_2\in \mathfrak{u}(k)
\right\},\nonumber  \\
\mathfrak{p}&=&\left\{\begin{pmatrix}
	0 & Y\\
	-\overline{Y}^T & 0
\end{pmatrix}~|~Y\in M(2, k; \mathbb{C})
\right\},\nonumber  \\
\mathfrak{k}_0&=&\left\{\begin{pmatrix}
	\Lambda & 0 & 0\\
	0 & \Lambda & 0 \\
	0 & 0 & T_3
\end{pmatrix}~|~\Lambda=\begin{pmatrix}
	u_{11} & 0 \\
	0 & u_{22}
\end{pmatrix}, u_{11}, u_{22}\in \mathrm{Im} \mathbb{C}, T_3\in \mathfrak{u}(k-2)
\right\},\nonumber \\
\mathfrak{k}_{\alpha_1}&=&\left\{\begin{pmatrix}
	0 & 0 & 0\\
	0 & 0 & X \\
	0 & -\overline{X}^T & 0
\end{pmatrix}~|~X=
\begin{pmatrix}
	a \\
	0
\end{pmatrix}, a\in \mathbb{C}^{k-2}
\right\},\nonumber \\
\mathfrak{k}_{\alpha_2}&=&\left\{\begin{pmatrix}
	0 & 0 & 0\\
	0 & 0 & X \\
	0 & -\overline{X}^T & 0
\end{pmatrix}~|~X=
\begin{pmatrix}
	0 \\
	b
\end{pmatrix}, b\in \mathbb{C}^{k-2}
\right\},\nonumber \\
\mathfrak{k}_{2\alpha_1}&=&\left\{\begin{pmatrix}
	\Lambda & 0 & 0\\
	0 & -\Lambda& 0 \\
	0 & 0 & 0
\end{pmatrix}~|~\Lambda=
\begin{pmatrix}
	\eta_{11} & 0 \\
	0 & 0
\end{pmatrix}, \eta_{11}\in \mathrm{Im} \mathbb{C}
\right\},\nonumber \\
\mathfrak{k}_{2\alpha_2}&=&\left\{\begin{pmatrix}
	\Lambda & 0 & 0\\
	0 & -\Lambda& 0 \\
	0 & 0 & 0
\end{pmatrix}~|~\Lambda=
\begin{pmatrix}
	0 & 0 \\
	0 & \eta_{22}
\end{pmatrix}, \eta_{22}\in \mathrm{Im} \mathbb{C}
\right\},\nonumber \\
\mathfrak{k}_{\alpha_1+\alpha_2}&=&\left\{\begin{pmatrix}
	\Lambda & 0 & 0\\
	0 & -\Lambda& 0 \\
	0 & 0 & 0
\end{pmatrix}~|~\Lambda=
\begin{pmatrix}
	0 & \eta_{12} \\
	-\overline{\eta}_{12} & 0
\end{pmatrix}, \eta_{12}\in \mathbb{C}
\right\},\nonumber \\
\mathfrak{k}_{\alpha_1-\alpha_2}&=&\left\{\begin{pmatrix}
	\Lambda & 0 & 0\\
	0 & \Lambda& 0 \\
	0 & 0 & 0
\end{pmatrix}~|~\Lambda=
\begin{pmatrix}
	0 & \eta_{12} \\
	-\overline{\eta}_{12} & 0
\end{pmatrix}, \eta_{12}\in \mathbb{C}
\right\},\nonumber \\
\mathfrak{a}&=&\left\{H=H(\xi_1, \xi_2)=\begin{pmatrix}
	0 &  \xi \\
	-\xi^T & 0 \\
\end{pmatrix}~|~\xi=
\begin{pmatrix}
	\xi_1 & 0 & & 0\\
	0 & \xi_2 & & 0
\end{pmatrix}, \xi_1, \xi_2\in \mathbb{R}
\right\},\nonumber \\
\mathfrak{p}_{\alpha_1}&=&\left\{\begin{pmatrix}
	0 &  0 &Y\\
	0 & 0  &0 \\
	-\overline{Y}^T &0 & 0
\end{pmatrix}~|~Y=\begin{pmatrix}
	a \\
	0
\end{pmatrix},
a\in \mathbb{C}^{k-2}
\right\},\nonumber\\
\mathfrak{p}_{\alpha_2}&=&\left\{\begin{pmatrix}
	0 &  0 &Y\\
	0 & 0  &0 \\
	-\overline{Y}^T &0 & 0
\end{pmatrix}~|~Y=\begin{pmatrix}
	0 \\
	b
\end{pmatrix},
b\in \mathbb{C}^{k-2}
\right\},\nonumber\\
\mathfrak{p}_{2\alpha_1}&=&\left\{\begin{pmatrix}
	0 & \Lambda &0\\
	-\overline{\Lambda}^T & 0  &0 \\
	0 &0 & 0
\end{pmatrix}~|~\Lambda=\begin{pmatrix}
	\eta_{11}& 0\\
	0& 0
\end{pmatrix},
\eta_{11}\in \mathrm{Im} \mathbb{C}
\right\},\nonumber\\
\mathfrak{p}_{2\alpha_2}&=&\left\{\begin{pmatrix}
	0 & \Lambda &0\\
	-\overline{\Lambda}^T & 0  &0 \\
	0 &0 & 0
\end{pmatrix}~|~\Lambda=\begin{pmatrix}
	0& 0\\
	0& \eta_{22}
\end{pmatrix},
\eta_{22}\in \mathrm{Im} \mathbb{C}
\right\},\nonumber\\
\mathfrak{p}_{\alpha_1+\alpha_2}&=&\left\{\begin{pmatrix}
	0 &  \Lambda & 0\\
	-\overline{\Lambda}^T & 0  &0 \\
	0 &0 & 0
\end{pmatrix}~|~\Lambda=
\begin{pmatrix}
	0 & \eta_{12} \\
	-\overline{\eta}_{12} & 0
\end{pmatrix}, \eta_{12}\in \mathbb{C}
\right\},\nonumber\\
\mathfrak{p}_{\alpha_1-\alpha_2}&=&\left\{\begin{pmatrix}
	0 &  \Lambda & 0\\
	-\overline{\Lambda}^T & 0  &0 \\
	0 &0 & 0
\end{pmatrix}~|~\Lambda=
\begin{pmatrix}
	0 & \eta_{12} \\
	\overline{\eta}_{12} & 0
\end{pmatrix}, \eta_{12}\in \mathbb{C}
\right\}.
\end{eqnarray*}
For $0<t<\frac{\pi}{4}$, and $H=H(\cos t, \sin t)$, the principal orbit $Ad(K)H$ is diffeomorphic to $K/K_0$, where
\begin{eqnarray*}
K_0&=&\left\{
\begin{pmatrix}
	\Lambda & 0 & 0\\
	0 & \Lambda& 0\\
	0 & 0 & T
\end{pmatrix}~|~
\Lambda=
\begin{pmatrix}
	e^{\sqrt{-1}s} & 0 \\
	0 & e^{\sqrt{-1}t}
\end{pmatrix}, s, t\in \mathbb{R}, T\in U(k-2)\right\}\nonumber\\
&=&\Delta(U(1)\times U(1))\times U(k-2).
\end{eqnarray*}
For the homogeneous isoparametric hypersurface $M=Ad(K)H\cong K/K_0$, we have the reductive decomposition $\mathfrak{k}=\mathfrak{k}_0\oplus\mathfrak{m}$, where $\mathfrak{m}=\mathfrak{k}_{\alpha_1+\alpha_2}\oplus \mathfrak{k}_{\alpha_1-\alpha_2}\oplus \mathfrak{k}_{\alpha_1}\oplus \mathfrak{k}_{\alpha_2}\oplus \mathfrak{k}_{2\alpha_1}\oplus \mathfrak{k}_{2\alpha_2}$. Moreover, the isotropy representations of $K_0$ on $\mathfrak{k}_{2\alpha_1}$ and $\mathfrak{k}_{2\alpha_2}$ are trivial. The isotropy representations on $\mathfrak{k}_{\alpha_1}$ and $\mathfrak{k}_{\alpha_2}$ are irreducible but not equivalent. The isotropy representations on $\mathfrak{k}_{\alpha_1\pm \alpha_2}$ are irreducible and equivalent.
Let $\{E_i\}$ be the standard basis of $\mathbb{R}^{k-2}$. Then $\{E_i, \sqrt{-1}E_i\}$ forms a real basis of   $\mathbb{C}^{k-2}$. For convenience, define a basis of $\mathfrak{k}_{\alpha_1}$ for $a\in \mathbb{C}^{k-2}$ corresponding to the real basis $\{E_i, \sqrt{-1}E_i\}$ of $\mathbb{C}^{k-2}$, denoted by $\{a_i, A_i\}$. Similarly, define a basis of $\mathfrak{k}_{\alpha_2}$ for $b\in \mathbb{C}^{k-2}$ corresponding to the real basis $\{E_i, \sqrt{-1}E_i\}$ of $\mathbb{C}^{k-2}$, denoted by $\{b_i, B_i\}$.
Choose a vector in $\mathfrak{k}_{2\alpha_1}$ corresponding to $\eta_{11}=\sqrt{-1}$, denoted by $\eta_{11}$. Similarly, choose a vector in $\mathfrak{k}_{2\alpha_2}$ corresponding to $\eta_{22}=\sqrt{-1}$, denoted by $\eta_{22}$. Choose a basis $e_1, e_2$ in $\mathfrak{k}_{\alpha_1+\alpha_2}$ corresponding to $\eta_{12}=\begin{pmatrix}
0 & 1 \\
-1 & 0
\end{pmatrix},
\begin{pmatrix}
0 & \sqrt{-1} \\
\sqrt{-1} & 0
\end{pmatrix},$ respectively.
Similarly, choose a basis $f_1, f_2$ in $\mathfrak{k}_{\alpha_1-\alpha_2}$ corresponding to $\eta_{12}=\begin{pmatrix}
0 & 1 \\
-1 & 0
\end{pmatrix},
\begin{pmatrix}
0 & \sqrt{-1} \\
\sqrt{-1} & 0
\end{pmatrix},$ respectively.

In this case, for simplicity, we only consider the following special almost complex structure and its integrability condition. For non-zero constants $\mu_1, \mu_2$, define a $U(2)\times U(k)$-invariant almost complex structure $J$ by
\begin{eqnarray*}
Ja_i=A_i, ~~JA_i=-a_i, &&Jb_i=B_i, ~~ JB_i=-b_i, \quad 1\leq i\leq k-2,\\
J\eta_{11}=-\frac{1}{\mu_1}\eta_{22}, &&J\eta_{22}=\mu_{1}\eta_{11},\\
Je_1=-\frac{1}{\mu_2}f_2, ~~ Jf_2=\mu_2e_1, &&Je_2=\frac{1}{\mu_2}f_1,  ~~ Jf_1=-\mu_2e_2.
\end{eqnarray*}
Now, we are in a position to determine the integrability condition of $J$. By the definition of $J$ and the the decomposition of $\mathfrak{m}$, we can determine the Nijenhuis tensor $N$. For instance, $N(\eta_{11}, e_1)=0$ if and only if $\mu_2^2=1$. In fact, $J$ is integrable if and only if $\mu_2^2=1$, and the homogeneous isoparametric hypersurface in this case admits a complex structure.
\vspace{3mm}

\noindent
\textbf{(4) The $\mathbf{(G, K)=(SO(10), U(5))}$ case, \bm{$(g, m_1, m_2)=(4, 4, 5)$}:}

In this case, $U(5)$ is embedded in $SO(10)$ by
\begin{eqnarray*}
U(5) &\hookrightarrow& SO(10),\nonumber\\
a+\sqrt{-1}b &\mapsto& \begin{pmatrix}
	a & b\\
	-b & a
\end{pmatrix}.
\end{eqnarray*}
Consequently, $\mathfrak{u}(5)=\{X+\sqrt{-1} Y~|~X+X^T=0, Y=Y^T, X, Y\in M(5, 5, \mathbb{R})\}$ is identified with $\mathfrak{k}=
\left\{\begin{pmatrix}
X & Y\\
-Y & X
\end{pmatrix}~|~ X+X^T=0, Y=Y^T, X, Y\in M(5, 5, \mathbb{R})\right\}\subset \mathfrak{o}(10)$ by $X+\sqrt{-1}Y\mapsto \begin{pmatrix}
X & Y\\
-Y & X
\end{pmatrix}.$

Write $X=\begin{pmatrix}
X_{11} & X_{12} & X_{13}\\
-X_{12}^T & X_{22}& X_{23}\\
-X_{13}^T & -X_{23}^T& 0
\end{pmatrix},$
and $Y=\begin{pmatrix}
Y_{11} & Y_{12} & Y_{13}\\
Y_{12}^T & Y_{22}& Y_{23}\\
Y_{13}^T & Y_{23}^T& Y_{33}
\end{pmatrix},$
where $X_{11}, X_{12}, X_{22}$, $Y_{11}, Y_{12}, Y_{22}\in M(2, 2, \mathbb{R}),$
$X_{13}, X_{23}, Y_{13}, Y_{23}\in M(2, 1, \mathbb{R}),$ and $Y_{33}\in \mathbb{R}.$
In this case, $(\mathfrak{g}, \mathfrak{k})=(\mathfrak{o}(10), \mathfrak{u}(5))$. Then
$$\mathfrak{p}=\left\{\begin{pmatrix}
A & B\\
B & -A
\end{pmatrix}~|~ A, B\in \mathfrak{o}(5)\right\}\subset \mathfrak{o}(10).$$
Write $A=\begin{pmatrix}
A_{11} & A_{12} & A_{13}\\
-A_{12}^T & A_{22}& A_{23}\\
-A_{13}^T & -A_{23}^T& 0
\end{pmatrix},$
and $B=\begin{pmatrix}
B_{11} &B_{12} & B_{13}\\
-B_{12}^T & B_{22}& B_{23}\\
-B_{13}^T & -B_{23}^T& 0
\end{pmatrix},$
where $A_{11}, A_{12}, A_{22}$, $B_{11}, B_{12}, B_{22}\in M(2, 2, \mathbb{R}),$
and $A_{13}, A_{23}, B_{13}, B_{23}\in M(2, 1, \mathbb{R}).$ Define $J_0=\begin{pmatrix}
0 & 1\\
-1&0
\end{pmatrix}.$
The restricted root decomposition is given by
\begin{eqnarray*}
\mathfrak{k}&=&\mathfrak{k}_0\oplus \mathfrak{k}_{\alpha_1} \oplus \mathfrak{k}_{\alpha_2} \oplus \mathfrak{k}_{2\alpha_1} \oplus \mathfrak{k}_{2\alpha_2}\oplus \mathfrak{k}_{\alpha_1+\alpha_2} \oplus \mathfrak{k}_{\alpha_1-\alpha_2},\nonumber\\
\mathfrak{p}&=&\mathfrak{a}\oplus \mathfrak{p}_{\alpha_1} \oplus \mathfrak{p}_{\alpha_2} \oplus \mathfrak{p}_{2\alpha_1} \oplus \mathfrak{p}_{2\alpha_2}\oplus \mathfrak{p}_{\alpha_1+\alpha_2} \oplus \mathfrak{p}_{\alpha_1-\alpha_2},
\end{eqnarray*}
where
\begin{eqnarray*}
\mathfrak{k}_0&:& X=\begin{pmatrix}
	\xi_1J_0 & 0 &0\\
	0 & \xi_2J_0 &0\\
	0 & 0 &0
\end{pmatrix},
Y_{11}, Y_{22}\in \mathrm{Span}\left\{\begin{pmatrix}
	0 & 1\\
	1& 0
\end{pmatrix},
\begin{pmatrix}
	1 & 0\\
	0& -1
\end{pmatrix}\right\}, Y_{33}\in \mathbb{R}, \nonumber \\
\mathfrak{k}_{\alpha_1}&:&X_{13}, Y_{13}\in M(2, 1, \mathbb{R}),\nonumber \\
\mathfrak{k}_{\alpha_2}&:&X_{23}, Y_{23}\in M(2, 1, \mathbb{R}),\nonumber \\
\mathfrak{k}_{2\alpha_1}&:&Y_{11}\in \mathbb{R}I_2,\nonumber \\
\mathfrak{k}_{2\alpha_2}&:&Y_{22}\in \mathbb{R}I_2,\nonumber \\
\mathfrak{k}_{\alpha_1+\alpha_2}&:&X_{12}\in \mathrm{Span}\left\{ \begin{pmatrix}
	1 & 0\\
	0& -1
\end{pmatrix},
\begin{pmatrix}
	0 & 1\\
	1& 0
\end{pmatrix}\right\},
Y_{12}\in \mathrm{Span}\left\{I_2,
J_0 \right\},\nonumber \\
\mathfrak{k}_{\alpha_1-\alpha_2}&:&X_{12}\in \mathrm{Span}\left\{I_2, J_0\right\},
Y_{12}\in \mathrm{Span}\left\{\begin{pmatrix}
	0 & 1\\
	1& 0
\end{pmatrix},
\begin{pmatrix}
	1 & 0\\
	0& -1
\end{pmatrix}\right\},\nonumber \\
\mathfrak{a}&:&A=H(\xi_1, \xi_2)=\begin{pmatrix}
	\xi_1J_0 & 0& 0 \\
	0 & \xi_2J_0& 0 \\
	0& 0 & 0
\end{pmatrix}, \xi_1, \xi_2\in \mathbb{R},\nonumber \\
\mathfrak{p}_{\alpha_1}&:&A_{13}, B_{13}\in M(2, 1, \mathbb{R}),\nonumber\\
\mathfrak{p}_{\alpha_2}&:&A_{23}, B_{23}\in M(2, 1, \mathbb{R}),\nonumber\\
\mathfrak{p}_{2\alpha_1}&:&B_{11}\in \mathbb{R}J_0,\nonumber\\
\mathfrak{p}_{2\alpha_2}&:&B_{22}\in \mathbb{R}J_0,\nonumber\\
\mathfrak{p}_{\alpha_1+\alpha_2}&:&A_{12}\in \mathrm{Span}\left\{\begin{pmatrix}
	0 & 1\\
	1& 0
\end{pmatrix},
\begin{pmatrix}
	1 & 0\\
	0& -1
\end{pmatrix}\right\},
B_{12}\in \mathrm{Span}\left\{I_2,
J_0\right\},\nonumber\\
\mathfrak{p}_{\alpha_1-\alpha_2}&:&A_{12}\in \mathrm{Span}\left\{I_2,
J_0\right\}, B_{12}\in \mathrm{Span}\left\{\begin{pmatrix}
	0 & 1\\
	1& 0
\end{pmatrix},
\begin{pmatrix}
	1 & 0\\
	0& -1
\end{pmatrix}\right\}.
\end{eqnarray*}
Under the identification $\mathfrak{u}(5)\cong \mathfrak{k}$, we have $\mathfrak{k}_0\cong \mathfrak{su}(2)+\mathfrak{su}(2)+\mathfrak{u}(1)\subset \mathfrak{u}(5)$. For $0<t<\frac{\pi}{4}$, and $H=H(\cos t, \sin t)$, the principal orbit $Ad(K)H$ is diffeomorphic to $K/K_0$, where
$K_0\cong SU(2)\times SU(2)\times U(1).$
For the homogeneous isoparametric hypersurface $M=Ad(K)H\cong U(5)/(SU(2)\times SU(2)\times U(1))$, the reductive decomposition is given by $\mathfrak{k}=\mathfrak{k}_0\oplus\mathfrak{m}$, where $\mathfrak{m}=\mathfrak{k}_{\alpha_1+\alpha_2}\oplus \mathfrak{k}_{\alpha_1-\alpha_2}\oplus \mathfrak{k}_{\alpha_1}\oplus \mathfrak{k}_{\alpha_2}\oplus \mathfrak{k}_{2\alpha_1}\oplus \mathfrak{k}_{2\alpha_2}$. Moreover, the isotropy representations of $K_0$ on $\mathfrak{k}_{2\alpha_1}$ and $\mathfrak{k}_{2\alpha_2}$ are trivial. The isotropy representations on $\mathfrak{k}_{\alpha_1}$ and $\mathfrak{k}_{\alpha_2}$ are irreducible but not equivalent.
The isotropy representations of $K_0$ on $\mathfrak{k}_{\alpha_1\pm\alpha_2}$ are given by
\begin{eqnarray*}
&&\begin{pmatrix}
	a &  &\\
	& b&\\
	&&e^{\sqrt{-1}\theta}
\end{pmatrix}\cdot
\begin{pmatrix}
	0 & X_{12}+\sqrt{-1}Y_{12}& 0\\
	-X_{12}^T+\sqrt{-1}Y_{12}^T& 0& 0\\
	0& 0& 0
\end{pmatrix}\nonumber\\
&=&\begin{pmatrix}
	0 & a(X_{12}+\sqrt{-1}Y_{12})\overline{b}^T& 0\\
	b(-X_{12}^T+\sqrt{-1}Y_{12}^T)\overline{a}^T& 0& 0\\
	0& 0& 0
\end{pmatrix}.
\end{eqnarray*}
It follows that the isotropy representations of $K_0$ on $\mathfrak{k}_{\alpha_1\pm\alpha_2}$ are irreducible. For convenience, we use $X_{12}+\sqrt{-1}Y_{12}$ to represent
$$\begin{pmatrix}
0 & X_{12}+\sqrt{-1}Y_{12}& 0\\
-X_{12}^T+\sqrt{-1}Y_{12}^T& 0& 0\\
0& 0& 0
\end{pmatrix}\in \mathfrak{k}_{\alpha_1\pm\alpha_2}.$$
Define
\begin{eqnarray*}
T&:& \mathfrak{k}_{\alpha_1+\alpha_2}\rightarrow \mathfrak{k}_{\alpha_1-\alpha_2}\nonumber\\
&&X_{12}+\sqrt{-1}Y_{12}\mapsto \sqrt{-1}X_{12}-Y_{12}.
\end{eqnarray*}
Thus $T$ is a homomorphism of the isotropy representations $K_0$ on $\mathfrak{k}_{\alpha_1\pm\alpha_2}$.

\begin{lem}
The isotropy representations of $K_0$ on $\mathfrak{k}_{\alpha_1\pm\alpha_2}$ are equivalent. Moreover, for any isomorphism $\varphi: \mathfrak{k}_{\alpha_1+\alpha_2}\rightarrow \mathfrak{k}_{\alpha_1-\alpha_2}$ between the isotropy representations of $K_0$ on $\mathfrak{k}_{\alpha_1\pm\alpha_2}$, there exists $c\in \mathbb{R}$ such that $\varphi=cT$.
\end{lem}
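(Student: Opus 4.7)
The plan is to recognize the intertwiner $T$ as essentially multiplication by $\sqrt{-1}$ on matrix entries, which makes its equivariance automatic, and then establish uniqueness by reducing to the absolute irreducibility of the $K_0$-representation on $\mathfrak{k}_{\alpha_1+\alpha_2}$ via Schur's lemma over $\mathbb{R}$.

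For the equivalence, I would observe that under the identification $\mathfrak{k}\cong\mathfrak{u}(5)$, the $(1,2)$-block of any element of $\mathfrak{k}_{\alpha_1\pm\alpha_2}$ is a complex $2\times 2$ matrix $v=X_{12}+\sqrt{-1}Y_{12}$, and a direct check yields $T(v)=\sqrt{-1}\,v$. Well-definedness of $T\colon \mathfrak{k}_{\alpha_1+\alpha_2}\to\mathfrak{k}_{\alpha_1-\alpha_2}$ is immediate, since multiplication by $\sqrt{-1}$ interchanges the real-form descriptions of the two root spaces (namely, $\mathrm{Span}_{\mathbb{R}}\{\sigma_3,\sigma_1\}$ is sent to $\sqrt{-1}\,\mathrm{Span}_{\mathbb{R}}\{\sigma_3,\sigma_1\}$, matching the required form of the imaginary part of $\mathfrak{k}_{\alpha_1-\alpha_2}$, and similarly for $\mathrm{Span}_{\mathbb{R}}\{I_2,J_0\}$). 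The $K_0$-action on this block is $v\mapsto AvB^{\ast}$ with $A,B\in SU(2)$ (the $U(1)$ factor acts trivially on the $(1,2)$-block), and since this action is $\mathbb{C}$-linear in $v$, it commutes with $T$. As $T$ is visibly invertible, the two representations are equivalent.

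For the uniqueness, any $K_0$-equivariant $\varphi$ gives $S:=T^{-1}\circ\varphi\in\mathrm{End}_{K_0}(\mathfrak{k}_{\alpha_1+\alpha_2})$, so by the real-version of Schur's lemma it suffices to show that this endomorphism ring is $\mathbb{R}$, i.e.\ that $\mathfrak{k}_{\alpha_1+\alpha_2}$ is absolutely irreducible. I would establish this by viewing the full ambient space $M(2,2,\mathbb{C})$ with the action $v\mapsto AvB^{\ast}$ as a complex $SU(2)\times SU(2)$-representation: this is the standard tensor product $\mathbb{C}^{2}\otimes\mathbb{C}^{2}$, irreducible of complex dimension $4$. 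Since the standard $SU(2)$-representation is self-conjugate, the complexification of the underlying real representation is $M(2,2,\mathbb{C})\otimes_{\mathbb{R}}\mathbb{C}\cong (\mathbb{C}^{2}\otimes\mathbb{C}^{2})\oplus(\mathbb{C}^{2}\otimes\mathbb{C}^{2})$. The real direct sum $M(2,2,\mathbb{C})=\mathfrak{k}_{\alpha_1+\alpha_2}\oplus\mathfrak{k}_{\alpha_1-\alpha_2}$ then forces $\mathfrak{k}_{\alpha_1+\alpha_2}\otimes_{\mathbb{R}}\mathbb{C}$ to be a $4$-dimensional subrepresentation of $2(\mathbb{C}^{2}\otimes\mathbb{C}^{2})$, which by a dimension count must be isomorphic to a single irreducible copy of $\mathbb{C}^{2}\otimes\mathbb{C}^{2}$. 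Hence $\mathfrak{k}_{\alpha_1+\alpha_2}$ is absolutely irreducible, Schur gives $S=c\,\mathrm{Id}$ for some $c\in\mathbb{R}$, and thus $\varphi=cT$.

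The only step requiring care is the identification of the complexified $K_0$-action on $M(2,2,\mathbb{C})$ with $2(\mathbb{C}^{2}\otimes\mathbb{C}^{2})$ — in particular, that $\mathfrak{k}_{\alpha_1+\alpha_2}\otimes_{\mathbb{R}}\mathbb{C}$ is a single isotypic copy rather than a $\mathbb{C}$-linear mixture of both — but this follows from the dimension count and the known irreducibility of the standard tensor representation. Once that is secured, everything else in the argument is formal.
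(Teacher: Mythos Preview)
Your argument is correct and follows the same essential strategy as the paper: both reduce the uniqueness statement to the real Schur lemma by establishing that the $K_0$-representation on $\mathfrak{k}_{\alpha_1+\alpha_2}$ is absolutely irreducible. The only difference is in how that absolute irreducibility is justified. The paper observes that the action factors through $(SU(2)\times SU(2))/\{\pm(I_2,I_2)\}\cong SO(4)$ and invokes the general fact that every real irreducible representation of $SO(4)$ is absolutely irreducible, whereas you argue directly by complexifying and using the dimension count inside $2(\mathbb{C}^2\otimes\mathbb{C}^2)$. Your route is more self-contained; the paper's is shorter once the $SO(4)$ fact is taken for granted. Either way the conclusion $\varphi=cT$ follows.
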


\begin{proof}
First, observe that $T$ is an isomorphism and that the isotropy representations of $K_0$ on $\mathfrak{k}_{\alpha_1\pm\alpha_2}$ are equivalent.
Next, the isotropy representations of $K_0$ on $\mathfrak{k}_{\alpha_1\pm\alpha_2}$ essentially induce irreducible representations of $(SU(2)\times SU(2))/\{(I_2, I_2), (-I_2, -I_2)\}\cong SO(4)$. Since any irreducible representation of $SO(4)$ must be absolutely irreducible, it follows from Schur's Lemma that there exists $c\in \mathbb{R}$ such that $\varphi=cT$.
\end{proof}
Using this lemma, it is possible to determine all $U(5)$-invariant almost complex structures on isoparametric hypersurface in this case. For simplicity, to finish the proof, we only consider the following particular $U(5)$-invariant almost complex structure and study the integrability condition.

Define a basis $g_1, g_2, h_1, h_2$ of $\mathfrak{k}_{\alpha_1}$ such that $g_1, g_2$ are corresponding to $X_{13}=(1, 0)^T, (0, 1)^T$, and $h_1, h_2$ are corresponding to $Y_{13}=(1, 0)^T, (0, 1)^T$, respectively. Similarly, choose a basis $k_1, k_2, L_1, L_2$ of $\mathfrak{k}_{\alpha_2}$, such that $k_1, k_2$ are corresponding to $X_{23}=(1, 0)^T, (0, 1)^T$, and $L_1, L_2$ are corresponding to $Y_{23}=(1, 0)^T, (0, 1)^T$, respectively. Choose a basis $e_1, e_2, e_3, e_4$ in $\mathfrak{k}_{\alpha_1+\alpha_2}$ corresponding to $X_{12}=\begin{pmatrix}
1 & 0 \\
0 & -1
\end{pmatrix},
\begin{pmatrix}
0 & 1 \\
1 & 0
\end{pmatrix},
Y_{12}=J_0, I_2,$ respectively.
Similarly, choose a basis $f_1, f_2, f_3, f_4$ in $\mathfrak{k}_{\alpha_1-\alpha_2}$ corresponding to $Y_{12}=\begin{pmatrix}
1 & 0 \\
0 & -1
\end{pmatrix},$
$\begin{pmatrix}
0 & 1 \\
1 & 0
\end{pmatrix}, X_{12}=-J_0, -I_2$, respectively.
Choose a vector $e\in \mathfrak{k}_{2\alpha_1}$ corresponding to $Y_{11}=I_2$, and a vector $f\in \mathfrak{k}_{2\alpha_2}$ corresponding to $Y_{22}=I_2.$
For non-zero constants $\mu_1, \mu_2$, define a $U(2)\times U(k)$-invariant almost complex structure $J$ by
\begin{eqnarray*}
Je=-\frac{1}{\mu_1}f, &~~Jf=\mu_1e,&~~ Je_i=-\frac{1}{\mu_2}f_i, ~~Jf_i=\mu_2e_i,~~ 1\leq i\leq 4,\\
Jg_1=h_1, &Jg_2=h_2,& Jh_1=-g_1,\quad\,\,  Jh_2=-g_2, \\
Jk_1=L_1,& Jk_2=L_2,& JL_1=-k_1, \quad\,\, JL_2=-k_2.
\end{eqnarray*}
%$Je=-\frac{1}{\mu_1}f, Jf=\mu_1e, Je_i=-\frac{1}{\mu_2}f_i, Jf_i=\mu_2e_i, 1\leq i\leq 4,
%Jg_1=h_1, Jg_2=h_2, Jh_1=-g_1, Jh_2=-g_2, Jk_1=L_1, Jk_2=L_2, JL_1=-k_1, JL_2=-k_2.$
Now, we are in a position to study the integrability condition of $J$. As in the $(SU(k+2), S(U(2)\times U(k))$ case, by the definition of $J$ and the decomposition of $\mathfrak{m}$, the Nijenhuis tensor $N$ can be determined. For instance, $N(e, e_1)=0$ if and only if $\mu_2^2=1$. Actually,
$J$ is integrable if and only if $\mu_2^2=1$, and the homogeneous isoparametric hypersurface in this case admits a complex structure.
\vspace{3mm}

\noindent
\textbf{(5) The $\mathbf{(G, K)=(E_6, U(1)\cdot Spin(10))}$ case, \bm{$(g, m_1, m_2)=(4, 6, 9)$}:}

In this case, the homogeneous isoparametric hypersurface $M$ is diffeomorphic to $U(1)\cdot Spin(10) / (U(1)\cdot SU(4))\cong Spin(10)/SU(4)$. According to Theorem VII of \cite{Wan54}, $Spin(10)/SU(4)$ admits an $Spin(10)$-invariant complex structure.
\end{proof}

\subsection{Invariant properties of complex structures constructed in Section \ref{M1} and \ref{M_+ with m=2,4}.}
In this section, we will discuss the invariant properties of the complex structure obtained in Section \ref{M1} and \ref{M_+ with m=2,4} based on Cartan's moving frame method and symmetric Clifford system, respectively. We start with the following fundamental lemma.
\begin{lem}
Let $G$ be a connected compact Lie group and $\varphi: G\rightarrow SO(n)$ be an orthogonal representation on $\mathbb{R}^n$. For $p\in S^{n-1}\subset \mathbb{R}^n$, consider the orbit $M^k=Gp$. Let $J: TM\rightarrow TM$ be an almost complex structure. Then $J$ is $G$-invariant if and only if for any $g\in G$
$$J_{gp}\circ \varphi(g)=\varphi(g)\circ J_p: T_pM\subset \mathbb{R}^n\rightarrow T_{gp}M\subset \mathbb{R}^n.$$
\end{lem}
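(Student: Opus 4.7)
The plan is to observe that since the action of $G$ on $M = Gp$ is by restriction of the linear representation $\varphi$, the differential of the action map is nothing more than $\varphi(g)$ itself, and the general definition of $G$-invariance collapses to the stated identity. So the proof is essentially an unwinding of definitions, done in three small steps.

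First I would identify the $G$-action $\tau_g: M \to M$, $hp \mapsto ghp$, with the restriction $\varphi(g)|_M$; the fact that $\varphi(g)$ preserves $M = Gp$ follows immediately from $\varphi(g)(hp) = (gh)p \in M$, so $\tau_g$ is a well-defined diffeomorphism of $M$. Second I would note that because $\tau_g$ is the restriction of a linear map on $\mathbb{R}^n$, its differential at any $x \in M$ is just $\varphi(g)$ applied to tangent vectors viewed inside $\mathbb{R}^n$; in particular $(d\tau_g)_p = \varphi(g)|_{T_pM}: T_pM \to T_{gp}M$.

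Third, I would translate the invariance condition. By definition, $J$ is $G$-invariant iff for every $g \in G$ and every $x \in M$,
\[
J_{\tau_g(x)}\circ (d\tau_g)_x = (d\tau_g)_x \circ J_x.
\]
Specializing $x = p$ and using step two, this is exactly $J_{gp}\circ\varphi(g) = \varphi(g)\circ J_p$, which gives the forward implication. For the converse, given that the identity holds at $p$, one checks it at an arbitrary point $x = g_0 p$ by writing $\tau_g|_x = \tau_{gg_0}|_p \circ (\tau_{g_0}|_p)^{-1}$ and applying the hypothesis twice (once for $gg_0$ and once for $g_0$); transitivity of $G$ on $M$ ensures every point is reached.

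There is really no obstacle here beyond bookkeeping: the only point that deserves a sentence is the observation that since $M$ is a single $G$-orbit, verifying the commutation relation at the base point $p$ for all $g$ is equivalent to verifying $G$-invariance everywhere. No integrability or structural properties of $J$ enter, so the lemma is purely formal and will be used later as the bridge between point-wise algebraic constructions (via the Clifford system) and the global $G$-invariance statements.
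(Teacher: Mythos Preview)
Your argument is correct: the key observations are that the differential of the restricted linear map $\varphi(g)$ is $\varphi(g)$ itself, and that transitivity of the $G$-action reduces the global invariance condition to the single basepoint $p$. The paper actually states this lemma without proof, treating it as an elementary unwinding of definitions, which is exactly what you have supplied.
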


As an application of the lemma above, we obtain
\begin{prop}\label{invariant property}
\begin{itemize}
\item[(i)] For $g=4, m=1$, the complex structure of the isoparametric hypersurface defined in (\ref{ new J}) is $SO(2)\times SO(l)$-invariant.
\item[(ii)] For $g=4, m=2$, the complex structure of the focal submanifold $M_+$ defined in (\ref{J2}) is $U(l)$-invariant but not $U(2)\times U(l)$-invariant.
\item[(iii)] For $g=4, m=4$ in the definite case, the complex structure of the focal submanifold $M_+$ defined in (\ref{J3}) is $Sp(l)$-invariant but not $Sp(2)\times Sp(l)$-invariant.
\end{itemize}
\end{prop}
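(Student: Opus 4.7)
The plan is to verify Proposition \ref{invariant property} one part at a time by applying the fundamental lemma just stated, which reduces $G$-invariance of $J$ to the pointwise intertwining identity $J_{gp}\circ \varphi(g) = \varphi(g)\circ J_p$ on tangent spaces.

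For part \textbf{(i)}, I would work with the explicit parametrization $\varphi_t : S^1 \times V_2(\mathbb{R}^l)\to M_t$ from Section \ref{M1}. The $SO(2)\times SO(l)$-action on $\mathbb{R}^{2l}=\mathbb{C}^l$ is $(e^{\sqrt{-1}\phi},A)\cdot(u+\sqrt{-1}v) = e^{\sqrt{-1}\phi}(Au+\sqrt{-1}Av)$; it commutes with $P_0P_1=\sqrt{-1}$ and, by inspection of the explicit frames computed in Section \ref{M1}, preserves each principal distribution $\mathcal{D}_i$. Since every formula in (\ref{ new J}) applies to $\widetilde{J}$ a scalar factor depending only on $t$ (not on the base point), and $\widetilde{J}|_{\mathcal{D}_2},\widetilde{J}|_{\mathcal{D}_4}$ are multiplication by $\sqrt{-1}$ while $\widetilde{J}$ interchanges the distinguished bases of $\mathcal{D}_1$ and $\mathcal{D}_3$ compatibly with the $SO(2)\times SO(l)$-action, the required intertwining holds automatically.

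For parts \textbf{(ii)} and \textbf{(iii)}, the natural $U(l)$ (resp.\ $Sp(l)$) action arises by regarding $\mathbb{R}^{2l}$ as a complex (resp.\ quaternionic) vector space via the complex structure $P_0P_1$ (resp.\ the quaternionic triple built from $\{P_0P_1, P_2P_3, P_3P_4\}$) and taking the subgroup of $O(2l)$ centralizing all the Clifford generators used in the definition (\ref{J2}) of $J$ (resp.\ (\ref{J3})). By construction this subgroup preserves the defining equations $\langle P_\alpha x,x\rangle=0$ of $M_+$, preserves the distinguished tangent decomposition $T_xM_+ = \text{Span}\{P_0P_1 x, P_0P_1P_2 x\}\oplus E$ (resp.\ $T_xM_+ = \text{Span}\{P_0P_1x, P_2P_3x, P_2P_4x, P_3P_4x\}\oplus V$), and intertwines $J$ summand-by-summand: on $E$ (resp.\ $V$) invariance follows from commutation with $P_0P_1$, and on the distinguished span invariance follows from commutation with $P_0P_1P_2$ (resp.\ with each of $P_2P_3, P_2P_4, P_3P_4$).

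For the failure of invariance under the larger $U(2)\times U(l)$ (resp.\ $Sp(2)\times Sp(l)$), I would use the fact that the extra $U(2)$ (resp.\ $Sp(2)$) factor acts by rotating the Clifford basis $\{P_0,P_1,P_2\}$ (resp.\ $\{P_0,\dots,P_4\}$) inside $\text{Span}\{P_0,\dots,P_m\}$, preserving $F$ and hence $M_+$. Pick one concrete such rotation — for instance the element mixing $P_0$ with $P_2$ in the $m=2$ case, or one mixing $P_0$ with $P_3$ in the $m=4$ case — and observe that after conjugation by this element the \emph{products} $P_0P_1P_2$ (resp.\ $P_3P_4$) that appear in the definition of $J$ are changed, not merely the individual generators. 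At a base point where this discrepancy occurs, exhibit an explicit tangent vector $X$ for which $g\cdot J_xX\ne J_{gx}\cdot gX$; one such vector suffices to conclude non-invariance. The main obstacle is the bookkeeping in this last step: one must choose a rotation that preserves the polynomial $F$ (so that it actually gives an element of the isometry group of $M_+$) while simultaneously failing to commute with the particular Clifford \emph{product} used in $J$'s definition, rather than with an individual $P_i$. Once the explicit rotation is chosen, both checks reduce to short Clifford-algebra manipulations.
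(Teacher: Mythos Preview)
Your strategy is exactly what the paper intends: the paper offers no proof of this proposition beyond citing the fundamental lemma, so verifying the intertwining identity $J_{gp}\circ\varphi(g)=\varphi(g)\circ J_p$ directly for each part is the indicated route, and your outlines for parts (i) and (ii) are sound.

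One technical slip in part (iii): the operators $P_0P_1$, $P_2P_3$, $P_3P_4$ do \emph{not} form a quaternionic triple on $\mathbb{R}^{2l}$, because $P_0P_1$ \emph{commutes} with both $P_2P_3$ and $P_3P_4$ (the index pairs are disjoint). A genuine anticommuting triple would be, for instance, $P_0P_1,\ P_0P_2,\ P_2P_1$. For the purpose at hand it is cleaner to bypass this description and simply take the centralizer of the full system $\{P_0,\dots,P_4\}$ in $O(2l)$; in the definite case this centralizer is the symplectic group named in the proposition, it automatically commutes with every product $P_iP_j$ occurring in (\ref{J3}), and the remainder of your argument then goes through unchanged.
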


\section{\textbf{Geometric Properties of Complex Structures}}
In this section, we will discuss the special metrics of complex structures related to isoparametric theory.
\subsection{\textbf{K\"{a}hler metric}}
According to M\"{u}nzner \cite{Mun81} and the cohomology theory of K\"{a}hler metrics, for $g=4$, the only isoparametric hypersurface that admits a K\"{a}hler structure is that with $(m_1, m_2)=(2,2)$. Therefore, in general, we cannot expect that the complex structure associated with isoparametric theory admits a K\"{a}hler metric.

\subsection{\textbf{Balanced metric}}
For a non-K\"{a}hler complex manifold $M^n$, it is interesting to determine whether $M$ admits a balanced metric. By definition, a Hermitian metric is called balanced if its K\"{a}hler form $\omega$ satisfies $d(\omega^{n-1})=0$.

\subsubsection{g=4, m=1}
Based on the results in Section \ref{M1}, given $0<t<\frac{\pi}{4}$, consider the isoparametric hypersurface $M_t$ for $g=4, m=1$ with the complex structure $J$ defined in Theorem \ref{m=1}. Define a natural Hermitian metric $ds^2$ by modifying the induced metric on different principal distributions ${\mathcal{D}}_1$, ${\mathcal{D}}_2$, ${\mathcal{D}}_3$, ${\mathcal{D}}_4$ with constants $\frac{\sin{t}+\cos{t}}{\sqrt{2}}, \cos{t}, \frac{\sin{t}-\cos{t}}{\sqrt{2}}, \sin{t}$, respectively. Then
\begin{prop}
The Hermitian manifold $(M_t, ds^2, J)$ defined above is not balanced.
\end{prop}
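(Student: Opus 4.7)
My plan is to compute the K\"ahler form $\omega$ of $(M_t, J, ds^2)$ explicitly in the Cartan coframe used in the proof of Theorem \ref{m=1}, and then to check that $d\omega^{n-1}\neq 0$ where $n=l-1=\dim_{\mathbb{C}}M_t$; since $d\omega^{n-1}=0$ is equivalent to the Hermitian metric being balanced, this gives the result.

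First I would express $\omega$ in terms of the Maurer--Cartan forms on $S^1\times V_2(\mathbb{R}^l)$. The $J$-adapted orthonormal frame $\{X_1, JX_1, Y_j, JY_j\}_{3\le j\le l}$ for $ds^2$, with $X_1=\tfrac{\sin t+\cos t}{2}e^{\sqrt{-1}\vartheta}(v-\sqrt{-1}u)$ and $Y_j=\cos t\, e^{\sqrt{-1}\vartheta}e_j$, admits a dual unitary coframe obtained by rescaling $\{\alpha,\beta,\phi_j,\psi_j\}$ by the inverse norm factors prescribed by $ds^2$. Using $\alpha\wedge\beta=(\sin^2t-\cos^2t)\,d\vartheta\wedge\omega_{12}$ and $\phi_j\wedge\psi_j=\sin t\cos t\,\omega_{1j}\wedge\omega_{2j}$, the scaling factors cancel cleanly and yield the compact formula
\[
\omega\;=\;2\,d\vartheta\wedge\omega_{12}\;+\;\sum_{j=3}^{l}\omega_{1j}\wedge\omega_{2j}.
\]

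Next I would differentiate $\omega$ using the structure equations $d\omega_{ij}=\sum_k\omega_{ik}\wedge\omega_{kj}$. One finds $d\omega_{12}=-\Omega$ where $\Omega:=\sum_{j=3}^l\omega_{1j}\wedge\omega_{2j}$, so $d(2\,d\vartheta\wedge\omega_{12})=2\,d\vartheta\wedge\Omega$. Expanding $d\Omega$ produces two families of triple wedges $\omega_{1k}\wedge\omega_{kj}\wedge\omega_{2j}$ and $\omega_{1j}\wedge\omega_{2k}\wedge\omega_{kj}$ over ordered pairs $j\ne k$ in $\{3,\ldots,l\}$; swapping $j\leftrightarrow k$ in the first family, together with the skew-symmetry $\omega_{jk}=-\omega_{kj}$, matches them term by term and yields $d\Omega=0$. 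Consequently $d\omega=2\,d\vartheta\wedge\Omega$.

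Finally, because $(d\vartheta\wedge\omega_{12})^2=0$ and $d\Omega=0$, a short binomial expansion gives
\[
d\omega^{n-1}=(n-1)\,\omega^{n-2}\wedge d\omega=2(l-2)\,d\vartheta\wedge\Omega^{l-2}.
\]
Since the 2-forms $\omega_{1j}\wedge\omega_{2j}$ commute and pair-wise square to zero, $\Omega^{l-2}=(l-2)!\,\omega_{13}\wedge\omega_{23}\wedge\cdots\wedge\omega_{1l}\wedge\omega_{2l}$. As $\{d\vartheta,\omega_{12},\omega_{1j},\omega_{2j}\}_{3\le j\le l}$ is a coframe on $M_t\cong S^1\times V_2(\mathbb{R}^l)$, the resulting $(2l-3)$-form $d\vartheta\wedge\omega_{13}\wedge\omega_{23}\wedge\cdots\wedge\omega_{1l}\wedge\omega_{2l}$ is nowhere zero, so $d\omega^{n-1}\neq 0$ and $(M_t,J,ds^2)$ is not balanced. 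The main technical step is the telescoping cancellation $d\Omega=0$ coming from the $j\leftrightarrow k$ index swap; once that is in hand, the rest is bookkeeping with the Maurer--Cartan equations and the combinatorics of commuting 2-forms.
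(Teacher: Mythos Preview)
Your proposal is correct and follows essentially the same strategy as the paper: express the K\"ahler form in the Maurer--Cartan coframe coming from the Cartan moving-frame description of $M_t$, differentiate using the structure equations of $\mathfrak{o}(l)$, and observe that $d\omega^{n-1}$ is a nonzero multiple of $d\vartheta\wedge\omega_{13}\wedge\omega_{23}\wedge\cdots\wedge\omega_{1l}\wedge\omega_{2l}$. The paper carries this out explicitly only for $l=4$ (obtaining $d(\omega^2)=8\,\omega_{14}\wedge\omega_{24}\wedge\omega_{13}\wedge\omega_{23}\wedge d\theta$) and declares the general case analogous, whereas you treat arbitrary $l$ directly via the identities $d\Omega=0$ and $\Omega^{l-2}=(l-2)!\,\omega_{13}\wedge\omega_{23}\wedge\cdots\wedge\omega_{1l}\wedge\omega_{2l}$; this is a cleaner packaging of the same computation rather than a genuinely different argument.
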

\begin{proof}
We only consider the case $(g, m_1, m_2)=(4, 1, 2)$. For general $m_2=l-2$, the proof is similar and omitted.
Using the unitary frame in the proof of Theorem \ref{m=1}, we find that it is an orthogonal frame with respect to $ds^2$.
Recall that the K\"{a}hler form $\alpha$ is defined by $\alpha(X, Y)=ds^2(JX, Y)$.
Using the same dual frame as in the proof of Theorem \ref{m=1}, we get that
$\alpha=(d\theta-\omega_{12})\wedge (d\theta-\omega_{12})+\omega_{13}\wedge \omega_{23}+\omega_{14}\wedge\omega_{24}$.
Then by the structure equation of orthogonal group and direct computations, $d(\alpha^2)=8\omega_{14}\wedge\omega_{24}\wedge\omega_{13}\wedge \omega_{23}\bigwedge d\theta\neq 0.$ Therefore, $(M_t, J, ds^2)$ is not balanced.
\end{proof}

\subsubsection{g=4, m=2, 4}
\begin{prop}
\begin{itemize}
\item[(i)] For $g=4, m=2$, the focal submanifold $M_+$ with the complex structure $J$ defined in (\ref{J2}) cannot admit any balanced metric. % in Section \ref{M+2}
\item[(ii)] For $g=4, m=4$, the focal submanifold $M_+$ with the complex structure $J$ defined in (\ref{J3}) cannot admit any balanced metric.% in Section \ref{M+4}
\end{itemize}
\end{prop}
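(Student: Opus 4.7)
The plan is to argue by contradiction using Michelsohn's characterization of the balanced condition. Recall that a compact complex manifold $X$ of complex dimension $n$ admits a balanced Hermitian metric if and only if there is no non-zero positive $(n-1,n-1)$-current $T$ on $X$ whose Aeppli class vanishes, i.e., of the form $T=\partial A+\bar{\partial}\bar{A}$ for some $(n-2,n-1)$-current $A$. So to prove both (i) and (ii) it suffices to exhibit, on $M_+$ endowed with the Clifford-theoretic complex structure, such an Aeppli-exact positive current.

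The first step is to extract holomorphic Killing vector fields directly from the Clifford system. For $m=2$, set $X:=P_0P_1 x$ and $Y:=P_0P_1P_2 x$. By the Clifford relations $P_iP_j+P_jP_i=2\delta_{ij}I_{2l}$ one verifies that $(P_0P_1)^2=-I$ and $(P_0P_1P_2)^2=-I$, that $P_0P_1$ and $P_0P_1P_2$ commute as endomorphisms, and that the rotation flows $e^{tP_0P_1}$ and $e^{tP_0P_1P_2}$ preserve $M_+$, the induced metric and (by a check analogous to the one in Proposition \ref{invariant property}) the complex structure $J$ of (\ref{J2}). Thus $X,Y$ are commuting, nowhere-vanishing real holomorphic Killing fields satisfying $JX=Y$, generating a $T^2$-action whose orbits are complex elliptic curves. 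For $m=4$ in the definite case, the four vector fields $P_0P_1x$, $P_2P_3x$, $P_2P_4x$, $P_3P_4x$ on the distinguished $J$-invariant $4$-plane in (\ref{J3}) play the same role, using additionally that $P_0P_1P_2P_3P_4=\pm I_{2l}$; they split into two $J$-invariant pairs, each furnishing an $S^1$-isometry of $(M_+,J)$.

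The second step uses these fields to produce the Aeppli-exact positive current. Suppose $\omega$ is balanced on $M_+$, so $d\omega^{n-1}=0$. Because $X$ is real holomorphic Killing, $\mathcal{L}_X\omega=0$; Cartan's formula then gives $d(\iota_X\omega^{n-1})=\mathcal{L}_X\omega^{n-1}-\iota_X d\omega^{n-1}=0$, so $\iota_X\omega^{n-1}$ is $d$-closed, and similarly for $Y$. Forming the $(1,0)$-vector field $V:=X-iY$ (nowhere zero on $M_+$) with dual $(0,1)$-form $\eta$, the plan is to compute the $(n-1,n-1)$-part of $d(\bar{\eta}\wedge\omega^{n-2})$ and exploit the Killing identities to display it as $\partial A+\bar{\partial}\bar{A}$ for an explicit $A$ built from $\eta$ and $\omega$. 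The strict positivity of the resulting current follows from $|V|^2>0$ everywhere, so Michelsohn's criterion is violated and no balanced $\omega$ can exist. The argument for $m=4$ proceeds analogously, combining the four holomorphic Killing fields pairwise through $J$ as dictated by (\ref{J3}).

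The main obstacle will be Step three: converting the Killing-field data and the balanced equation $d\omega^{n-1}=0$ into an honest, strictly positive $(n-1,n-1)$-current of the form $\partial A+\bar{\partial}\bar{A}$. Closedness of $\iota_X\omega^{n-1}$ is immediate from Cartan's formula, but promoting this to Aeppli-exactness requires a delicate bookkeeping of $(p,q)$-bidegrees and explicit use of $JX=Y$ (or the analogous pairings in the $m=4$ case). The nowhere-vanishing of the Clifford-theoretic vector fields is the lever that forces strict positivity, and together with the $T^r$-symmetry coming from the Clifford structure it should allow the argument to be carried out uniformly for all admissible $l$ (rather than only $l=4$, where the result is automatic because on complex surfaces balanced coincides with K\"ahler and $b_2(M_+)=0$).
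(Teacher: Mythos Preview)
The paper's argument is entirely different and much shorter than yours. It identifies $M_+$ with the homogeneous space $U(k)/U(k-2)$ for $m=2$ (respectively $Sp(k)/Sp(k-2)$ for $m=4$, definite), notes that this is an M-manifold in Wang's sense, uses Proposition~\ref{invariant property} to see that the complex structure $J$ is $U(k)$-invariant (respectively $Sp(k)$-invariant), and then simply invokes Theorem~2 of Podest\`a~\cite{Pod18}, which excludes balanced metrics on such homogeneous Hermitian manifolds. No currents appear at all.

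Your Michelsohn route is reasonable in spirit, but the proposal has genuine gaps. In Step~2 you assert $\mathcal{L}_X\omega=0$ for an \emph{arbitrary} balanced metric $\omega$ on the grounds that $X$ is ``real holomorphic Killing''; however, the flows $e^{tP_0P_1}$ and $e^{tP_0P_1P_2}$ are isometries only for the \emph{induced} round metric, and there is no reason they should preserve a hypothetical balanced $\omega$. You would first have to average $\omega^{n-1}$ over the torus action (using Michelsohn's bijection between positive $(n-1,n-1)$-forms and Hermitian metrics) to reduce to the invariant case, and you do not do this. More seriously, Step~3 is simply not carried out: you yourself call it ``the main obstacle'' and offer only a vague plan, and there is a logical tension in building the Michelsohn obstruction---which must be metric-independent---out of the hypothetical balanced $\omega$ itself. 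A cleaner execution of your own idea would bypass $\omega$ entirely: the $T^2$-orbits are compact complex curves (since $JX=Y$ and $[X,Y]=0$, and the flows are $2\pi$-periodic), one checks $H_2(M_+;\mathbb{R})=0$ for these Stiefel manifolds, so the integration current over any orbit is a non-zero positive $(n-1,n-1)$-current that is $d$-exact and hence of the form $\partial A+\bar\partial\bar A$, and Michelsohn's criterion finishes immediately. But that is not the argument you wrote.
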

\begin{proof}
(i). In the case $g=4, m=2, l=2k$, the focal submanifold $M_+$ is diffeomorphic to $U(k)/U(k-2)$. It follows that $M_+$ is an M-manifold(cf. \cite{Wan54}). By Proposition \ref{invariant property}, the focal submanifold $M_+$ with the complex structure $J$ defined in Section \ref{M+2} is $U(k)$-invariant. Then by Theorem 2 of \cite{Pod18}, $(M_+, J)$ in this case cannot admit any balanced metric.

The proof for part (ii) is similar and omitted.
\end{proof}

%\begin{ack}
%???
%\end{ack}

\end{document}